\newtheorem{theorem}{Theorem}[section]
\newtheorem{proposition}[theorem]{Proposition}
\newtheorem{lemma}[theorem]{Lemma}
\newtheorem{corollary}[theorem]{Corollary}
\theoremstyle{definition}
\theoremstyle{remark}
\newtheorem{remark}[theorem]{Remark}
\newcommand{\R}{\mathbb{R}}
\newcommand{\T}{\mathbb{T}}
\newcommand{\C}{\mathcal{C}}
\newcommand{\G}{\mathcal{G}}
\newcommand{\K}[1]{{#1}^{[k]}}
\newcommand{\eps}{\varepsilon}
\newcommand{\domvar}{\varrho}
\title{Splitting of separatrices for rapid degenerate perturbations of the classical pendulum}
\author{Inmaculada Baldomá\thanks{Departament de Matemàtiques, Universitat Politècnica de Catalunya, Av. Diagonal, 647 08028 Barcelona.
Email: immaculada.baldoma@upc.edu.} \and Teresa M.-Seara \thanks{Departament de Matemàtiques, Universitat Politècnica de Catalunya, Av. Diagonal, 647 08028 Barcelona.
Email: tere.m-seara@upc.edu.} \and Román Moreno \thanks{Departament de Matemàtiques, Universitat Politècnica de Catalunya, Av. Diagonal, 647 08028 Barcelona.
Email: roman.moreno@upc.edu.}}
\begin{document}
\maketitle

\begin{abstract}
In this work we study the splitting distance of a rapidly perturbed pendulum $H(x,y,t)=\frac{1}{2}y^2+(\cos(x)-1)+\mu(\cos(x)-1)g\left(\frac{t}{\varepsilon}\right)$ with $g(\tau)=\sum_{|k|>1}g^{[k]}e^{ik\tau}$ a $2\pi$-periodic function and $\mu,\varepsilon \ll 1$. Systems of this kind undergo exponentially small splitting and, when $\mu\ll 1$, it is known that the Melnikov function actually gives an asymptotic expression for the splitting function provided $g^{[\pm 1]}\neq 0$. Our study focuses on the case $g^{[\pm 1]}=0$ and it is motivated by two main reasons. On the one hand the general understanding of the splitting, as current results fail for a perturbation as simple as $g(\tau)=\cos(5\tau)+\cos(4\tau)+\cos(3\tau)$. On the other hand, a study of the splitting of invariant manifolds of tori of rational frequency $p/q$ in Arnold's original model for diffusion leads to the consideration of pendulum-like Hamiltonians with
$
g(\tau)=\sin\left(p\cdot\frac{t}{\varepsilon}\right)+\cos\left(q\cdot\frac{t}{\varepsilon}\right),
$ where, for most $p, q\in\mathbb{Z}$ the perturbation satisfies $g^{[\pm 1]}\neq 0$.
\\
As expected, the Melnikov function is not a correct approximation for the splitting in this case. To tackle the problem we use a splitting formula based on the solutions of the so-called inner equation and make use of the Hamilton-Jacobi formalism. The leading exponentially small term appears at order $\mu^n$, where $n$ is an integer determined exclusively by the harmonics of the perturbation. We also provide an algorithm to compute it.
\end{abstract}

\textbf{Keywords}
Splitting of separatrices, exponentially small phenomena, Hamiltonian systems.

\textbf{MSCcodes}
37D10.

\section{Introduction}
\label{sec_introduction}

In this paper we revisit  the problem of the exponentially small splitting of separatrices for one and a half degrees of freedom Hamiltonian systems with a non-autonomous fast periodic perturbation. 
This problem has been subject of research due to the role of transversal intersections between invariant manifolds in the appearance of chaos and, when the dimension is high enough,  in instability phenomena such as Arnold diffusion. Historically, the approach to determining whether transversal intersections occur has been to provide an asymptotic expansion of the splitting distance in terms of the perturbation parameter.

The general setting is a Hamiltonian system with an analytic Hamiltonian of the form:
\[
H_0(x,y)+\mu H_1(x,y,t/\varepsilon),\qquad x,y\in\mathbb{T}\times\mathbb{R}\text{  or  } x,y\in\mathbb{R}^2,
\] 
where the unperturbed Hamiltonian, $H_0(x,y)$, has a saddle fixed point whose stable and unstable manifolds coincide along  an  homoclinic orbit, $H_1(x,y,\tau)$ is $2\pi$-periodic in the time $\tau$ and $0<\varepsilon<1$, $0\leq \mu<1$ are parameters. 
In these models, the parameter $\mu$ controls the size of the perturbation, whereas $\varepsilon$ controls its frequency. The question is to establish if the perturbed stable and unstable manifolds intersect transversely for $\varepsilon,\mu>0$.

For non-fast perturbations, that is, when $\varepsilon=1$, classical perturbation theory provides an explicit function, named
Melnikov function,  which gives the first order in $\mu$ of the splitting distance.
However, when the perturbation is fast in time, that is for $0<\varepsilon\ll 1$, the Melnikov function becomes exponentially small in $\varepsilon$ and therefore a direct application of  Melnikov theory does not lead to any conclusion unless we take the parameter $\mu$ exponentially small in $\varepsilon$. 

Since the 80's, using the seminal ideas  developed by Lazutkin (see \cite{bib_laz} for an English translation) many  works (see  \cite{bib_Tere_Amadeu}, \cite{bib_Gelfreich},  \cite{bib_exp_small} and references therein) have aimed at giving conditions for either ensuring the validity of the Melnikov prediction, or providing alternative  methods to obtain the asymptotic formula when Melnikov prediction fails to be true. 
In both cases, the asymptotic formula only describes the first order of the splitting distance if some non-degeneracy condition is met. In the so-called regular case, when the Melnikov method is valid, the condition can be explicitly given in terms of  the perturbation whereas 
in the  singular case, where Melnikov prediction fails, the non-degeneracy condition can be established by the non-vanishing of the so-called \emph{Stokes constant} $\Theta \ne 0$, which is obtained studying  a different equation, independent of the singular parameter $\varepsilon$, known as the \emph{inner equation}.

In this work we focus on the ``degenerate regular" case, that is, when the Melnikov function seems to give the asymptotic value of the splitting distance but the non-degeneracy conditions fails. This degenerated context is related with the study of the splitting of separatrices of rational tori in Arnold's original model of diffusion (\cite{bib_Arnold}) where this setting naturally appears (see Section \ref{sec_arnold_example}). 

The idea is to use the more powerful tools from the singular case, i.e., the approximation of the manifolds by the solutions of the inner equation, to overcome the difficulties added by the degeneracy. 
The novelty of our argument is the following: on the one hand, by looking at $\mu$ and $\varepsilon$ as two independent parameters, we use the analyticity of the system with respect to $\mu$ to Taylor expand the splitting distance, each of the terms carrying an exponentially small factor in $\varepsilon$; on the other hand, we find the smallest power in $\mu$ where the leading exponentially small term appears and, since it is absent in the Melnikov approximation when this power is greater than 1, we use the inner equation to prove that it is dominant. 
Our result is valid for all $\mu$ and $\varepsilon$ small enough. Furthermore, the asymptotic formula is valid for the case $\mu$ independent of $\varepsilon$ or $\mu=\mathcal{O}(\varepsilon^n)$ for any $n>0$. 

A similar example with $n=2$ was exposed in \cite{bib_Higher_order_perturbation_pendulum}. 
In that paper the authors study the splitting for the pendulum equation given by $H(x,y,t)=\frac{1}{2}y^2+\cos(x)-\varepsilon\frac{1}{2}(x+\sin(x))(\cos(2\omega t)+\cos(3\omega t))$, with $\omega$ a negative power of the perturbative parameter $\varepsilon$. They establish the non-dominance of the classical Melnikov function (which is exponentially small in $\omega$) and compute the $\varepsilon^2$ term of the Taylor expansion of the splitting function (note that this is analogous to our result, where the dominant term in the splitting is given by order $\mu^2$). However, as the authors point out, the question of the dominance of the second order in $\varepsilon$ of the splitting remains unsolved. In another paper in the same line, \cite{bib_Non_dominance_Melnikov_example}, the authors consider a Duffing equation given by $H(x,y,t)=\frac{1}{2}y^2+\frac{1}{2}x^2-\frac{1}{4}x^4+\varepsilon \frac{1}{3}x^3(\cos(2\omega t)+\cos(3\omega t))$, with $\omega$ also a negative power of $\varepsilon$. In this particular case they compute the second order in $\varepsilon$ of the splitting and show that it gives the correct asymptotic behaviour. Their proof of this dominance relies on specific computations for this model.

\subsection{Measuring the splitting distance}
Even if the method we present is quite general, we deal with a classical problem, the rapidly forced pendulum, to illustrate it. The associated  Hamiltonian will be:
\begin{equation}
H\left(x,y,\frac{t}{\varepsilon};\mu\right)=H_0(x,y)+\mu H_1\left(x,y,\frac{t}{\varepsilon}\right)
=\frac{1}{2}y^2+(\cos(x)-1)+\mu(\cos(x)-1)g\left(\frac{t}{\varepsilon}\right),
\label{eq_model}
\end{equation}
where $(x,y)\in\T\times\R$, $g(\tau)$ is a real analytic $2\pi$-periodic function with zero mean, $|\mu|\ll 1$ and $0<\varepsilon\ll 1$.
When $\mu=0$ the unperturbed system has a saddle point at $(0,0)$ with coinciding unstable and stable manifolds along a homoclinic orbit that can be parameterized as:
\begin{equation}
x=x_0(t)=4\arctan(e^t), \qquad 
y= y_0(t)=\frac{2}{\cosh(t)}, \qquad t\in \R .
\label{eq_homoclinica}
\end{equation}
When $\mu\neq 0$, $\{(0,0,\tau)\}_{\tau \in [0,2\pi]}$ is an hyperbolic periodic orbit that has stable and unstable manifolds which, in general, will not coincide. The phenomenon of the splitting of separatrices deals precisely with the study of the difference between those invariant manifolds, as shown in \ref{fig_splitting}:

\begin{figure}[H]
    \centering
    \begin{overpic}[width=0.6\textwidth]{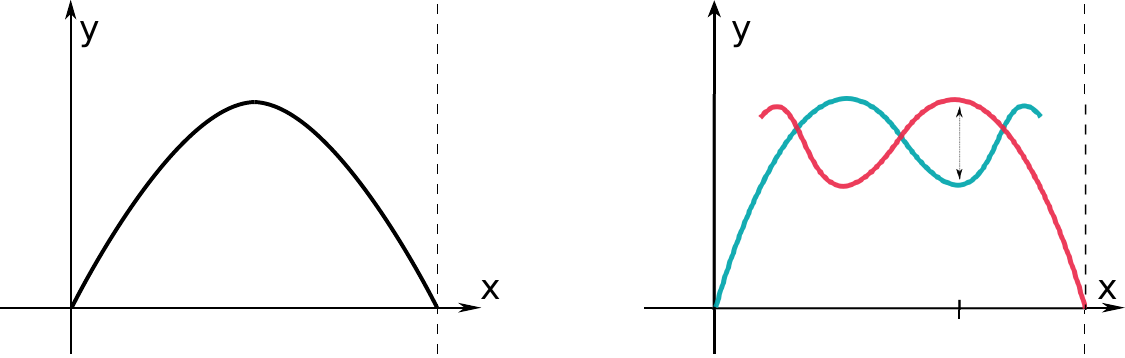}
    \put(82,6){$x_0$}
    \put(8,1){$0$}
    \put(33,1){$2\pi$}
    \put(65
    ,1){$0$}
    \put(92,1){$2\pi$}
    \put(85.5,18){$d$}
    \end{overpic}
    \caption{Left: unperturbed homoclinic. Right: distance between invariant manifolds, $d(\mathcal{W}^u,\mathcal{W}^s)$, at the point $x_0$.}
    \label{fig_splitting}
\end{figure}

This model falls in the setting where we can apply the results of the aforementioned work, \cite{bib_exp_small}.
Let us summarize here the main ideas and specify our measure of the splitting distance as well as some standard notation.
Even when it is not essential, we profit
from the fact that the perturbed manifolds are globally expressible as a graph via a $2\pi$-periodic in $\tau$ generating function, $S(x,\tau;\mu,\varepsilon)$ (see \cite{bib_sauzin}). 
Indeed, if we denote by $W^{\mathrm{u},\mathrm{s}}(x,\tau;\mu,\varepsilon)$, where $\mathrm{u}, \mathrm{s}$ stand for unstable and stable, the graph parameterization of the perturbed manifolds, we have that
$$
W^{\mathrm{u},\mathrm{s}}(x,\tau;\mu,\varepsilon)=  \big (x,\partial_x S^{\mathrm{u},\mathrm{s}}(x,\tau;\mu,\varepsilon) \big ) 
$$
with the generating functions $S^{\mathrm{u},\mathrm{s}}$ satisfying the Hamilton-Jacobi equation
\begin{equation}\label{eq:HJnova}
H\left(x,\partial_xS,\tau;\mu\right)+\frac{1}{\eps} \partial_\tau S=0,
\end{equation}
joint with the boundary conditions
\begin{equation}\label{eq_boundarycond}
\lim _{x\to 0} \partial_x S^{\mathrm{u}}(x,\tau;\mu,\varepsilon)=0, \quad
\lim _{x\to 2\pi} \partial_x S^{\mathrm{s}}(x,\tau;\mu,\varepsilon)=0.
\end{equation}
Therefore, taking $x\in (0,2\pi)$, a measure of the splitting distance is given by
\begin{equation}\label{intro:splitdistance}
d(x,\tau;\mu,\varepsilon)=\partial_xS^{\mathrm{u}}(x,\tau;\mu,\varepsilon)-\partial_xS^{\mathrm{s}}(x,\tau;\mu,\varepsilon).
\end{equation}
Following \cite{bib_exp_small}, instead of $S^{\mathrm{u},\mathrm{s}}$ we use a different parameterization with $u$ —the time on the unperturbed homoclinic— as the parameter. 
That is, we define the new parameter $u$ by $x=x_0(u)$, 
where $x_0$ is given in \eqref{eq_homoclinica}, and we write
\begin{equation}\label{eq:tus}
\widehat{T}^{\mathrm{u},\mathrm{s}}(u,\tau;\mu,\varepsilon)=S^{\mathrm{u},\mathrm{s}}(x_0(u),\tau;\mu,\varepsilon).
\end{equation}
Then, applying the chain rule:
\begin{equation}
y=\partial_xS^{\mathrm{u},\mathrm{s}}(x,\tau;\mu,\varepsilon)=\frac{\partial u}{\partial x}\cdot\partial_uS^{\mathrm{u},\mathrm{s}}(x_0(u),\tau;\mu,\varepsilon)=\frac{\partial u}{\partial x}\cdot\partial_u \widehat{T}(u,\tau;\mu,\varepsilon)=\frac{1}{y_0(u)}\cdot\partial_u\widehat{T}(u,\tau;\mu,\varepsilon).
\label{eq_chain_rule}
\end{equation}
Note that, with this parameterization, the boundary conditions \eqref{eq_boundarycond} read
\begin{equation}
    \lim_{u\to\pm\infty}\cosh^2(u)\cdot \partial_u \widehat{T}^{\mathrm{u},\mathrm{s}}(u,\tau;\mu,\varepsilon)=0.
    \label{eq_boundarcond_T}
\end{equation}

Finally, as we expect the manifold to be close to the unperturbed homoclinic, we write
\begin{equation}\label{eq:T}
\widehat{T}^{\mathrm{u},\mathrm{s}}(u,\tau;\mu,\varepsilon)=T_0(u)+T^{\mathrm{u},\mathrm{s}}(u,\tau;\mu,\varepsilon),
\end{equation}
where $T_0(u)$ is the generating function when $\mu=0$, namely $\partial_uT_0(u)=\frac{4}{\cosh^2(u)}$. 
Summarizing, we rewrite the splitting distance in~\eqref{intro:splitdistance}, using the same notation $d$ for it as
\begin{equation}
\label{eq_formula_d}
d(u,\tau;\mu,\varepsilon)=\frac{1}{y_0(u)}\left(\partial_u\widehat{T}^\mathrm{u}(u,\tau;\mu,\varepsilon)-\partial_u\widehat{T}^\mathrm{s}(u,\tau;\mu,\varepsilon)\right)=
\frac{\cosh(u)}{2}\left(\partial_uT^\mathrm{u}(u,\tau;\mu,\varepsilon)-\partial_uT^\mathrm{s}(u,\tau;\mu,\varepsilon)\right)
\end{equation}
and therefore, analyzing the splitting distance is equivalent  to understanding the function
\begin{equation}
\Delta(u,\tau;\mu,\varepsilon):= T^{\mathrm{u}}(u,\tau;\mu,\varepsilon)-T^{\mathrm{s}}(u,\tau;\mu,\varepsilon)
\label{eq_def_Delta_outer}
\end{equation}
and its derivatives.

In the
perturbative regime $\mu\ll 1$ it is known (see \cite{bib_Scheurle}, \cite{bib_angenent_variational}, \cite{bib_transcendentally_ellison}, \cite{bib_Seara_Delshams}, \cite{bib_Separatrices_splitting_Gelfreich} and \cite{bib_exp_small}) 
that  the dominant term of the splitting distance for system \eqref{eq_model} is given by the Melnikov function, $\mathcal{M}$.
More concretely, 
if we take, for instance,  the section $x=\pi$, which corresponds to $u=0$,
the splitting distance $d$ 
is  a periodic function of $\tau$ given by
\begin{equation}
d(u=0,\tau;\mu,\varepsilon)
=\mathcal{M}(\tau;\varepsilon)\cdot \mu+\mathcal{O}\left(\frac{|\mu|^2}{\varepsilon^2}\cdot e^{-\frac{\pi}{2\varepsilon}}\right)+\mathcal{O}\left(\frac{|\mu|}{\log(1/\varepsilon)\cdot\varepsilon^2}\cdot e^{-\frac{\pi}{2\varepsilon}}\right),
\label{eq_classical_formula_splitting}
\end{equation}
where the Melnikov function $\mathcal{M}(\tau;\varepsilon)$ is given by:
\begin{align*}
\mathcal{M}(\tau;\varepsilon) &=\partial_u\left.\int_{-\infty}^\infty \frac{1}{\cosh^2(u+r)}g(\tau+r/\varepsilon)dr\right|_{u=0}=-\int_{-\infty}^\infty \frac{2\sinh(r)}{\cosh^3(r)}g(\tau+r/\varepsilon
)d
r=\\
&=-i\frac{\pi}{\varepsilon^2}\sum_{k=-\infty}^\infty g^{[k]}\cdot e^{ik\tau}\cdot\frac{k^2}{\sinh\left(\frac{k\pi}{2\cdot\varepsilon}\right)},
\end{align*}
and we have written the function $g$ as its Fourier series: 
\begin{equation}\label{eq:g}
g(\tau)=\sum_{n\in\mathbb{N}}g^{[k]}\cdot e^{ik\tau}.
\end{equation}
The harmonics of the perturbation are multiplied by increasing exponentially small factors in $\varepsilon$
\begin{align*}
\mathcal{M}(\tau;\varepsilon)&=
\frac{4\pi}{\varepsilon^2}\cdot e^{-\frac{\pi}{2\varepsilon}}\cdot \Im \left(g^{[1]}\cdot e^{i\tau}\right) +\frac{16\pi}{\varepsilon^2}\cdot e^{-\frac{\pi}{2\varepsilon}\cdot 2}\cdot\Im\left(g^{[2]}\cdot e^{2i\tau}\right)+\mathcal{O}\left(\frac{e^{-\frac{\pi}{2\varepsilon}\cdot3}}{\varepsilon^2}\right).
\end{align*}
Consequently, when $g^{[1]}\neq 0$, the asymptotic formula for the splitting~\eqref{eq_classical_formula_splitting} is:
\begin{equation}\label{eq:split1}
d(u=0,\tau;\mu,\varepsilon)=  \frac{e^{-\frac{\pi}{2\varepsilon}}}{\varepsilon^2}\cdot\left[4\pi \Im \left(g^{[1]}\cdot e^{i\tau}\right)\cdot \mu +\mathcal{O}\left(|\mu|\cdot e^{-\frac{\pi}{2\varepsilon}}\right)+\mathcal{O}(|\mu|^2)+\mathcal{O}\left(\frac{|\mu|}{\log(1/\varepsilon)}\right)\right].
\end{equation}
In this non degenerate regular case, the first term is greater than the error for $\mu,\varepsilon\to 0$ and, therefore, formula \eqref{eq:split1} gives an asymptotic formula for the splitting distance $d(u=0,\tau;\mu,\varepsilon)$. In fact, what is proved in \cite{bib_exp_small} is a more general formula, valid for any  $\mu$, including the cases where $\mu=\mathcal{O}(1)$:
\begin{equation}
d(u=0,\tau;\mu,\varepsilon)=
\frac{e^{-\frac{\pi}{2\varepsilon}}}{\varepsilon^2}\cdot \left[\Im\left(\chi^{[-1]}(\mu)\cdot e^{i\tau}\right)+\mathcal{O}\left(\frac{|\mu|}{\log(1/\varepsilon)}\right)\right], 
\label{eq_distance_1}
\end{equation}
where the Stokes constant $\chi^{[-1]}(\mu)$ is obtained through the study of some special solutions of the \emph{inner equation}, an equation independent of the parameter $\varepsilon$ which, for the pendulum system associated to Hamiltonian \eqref{eq_model}, reads:
\begin{equation}
\label{eq_inner_equation}
\partial_\tau\psi(z,\tau,\mu)+\partial_z\psi(z,\tau,\mu)=\frac{1}{8}z^2(\partial_z\psi(z,\tau,\mu))^2-2\mu \frac{g(\tau)}{z^2}.
\end{equation}
Moreover, it is proven in \cite{bib_exp_small} that, when $|\mu| \ll 1$, the Stokes constant satisfies:
\[
\chi^{[-1]}(\mu)=4\pi g^{[1]}\mu +\mathcal{O}(\mu^2)
\]
and therefore one recovers the Melnikov dominance for $\mu$ small enough and $g^{[\pm 1]}\neq 0$.

Our strategy consists in exploiting the analytic dependence of equations \eqref{eq_model} and \eqref{eq_inner_equation} on $\mu$ to prove that the error term in \eqref{eq_distance_1} is $\mathcal{O}\left(\frac{|\mu|^n}{|\log(\varepsilon)|},e^{-\frac{\pi}{2\pi\varepsilon}}\right)$. We also provide a formula for $\chi^{[-1]}(\mu)$ in terms of suitable limits of some solutions of the inner equation. From the computational point of view, dealing with the inner equation allows us to provide an effective algorithm to compute the splitting distance (see Section \ref{sec_algorithm}).

To finish, we remark that the methodology presented in this paper is independent of the particular form of the equation \eqref{eq_model} and could be applied  to any one-and-a-half-degree-of-freedom Hamiltonian system with a homoclinic orbit and a non-generic fast perturbation, performing the necessary technical changes. The paper is organized as follows: in Section \ref{sec_result_and_sketch} we present some preliminary results and state the two main Theorems, \ref{teo_main_second} and \ref{th:maintheorem}; we also give two examples of application and present an algorithm to compute the leading term of $\chi^{[-1]}(\mu)$ numerically. In Section \ref{sec_proof_theorem_main_second} we prove Theorem \ref{teo_main_second}. Finally, in Section \ref{sec_app_inner_equation} we prove Theorem \ref{th:maintheorem}. We leave some technical proofs for the appendices.

\section{Main result}
\label{sec_result_and_sketch}

\subsection{Setting and notation}

All the functions in this work depend on $u,\tau$ and $\mu$ analytically, as well as on $\varepsilon$ (not analytically). We shall write the dependence in $u,\tau,\mu$ explicitly and leave out the dependence on $\varepsilon$ unless the context requires otherwise. Notice that, as $g(\tau)$ is real analytic, there exists $\sigma_0>0$ such that $g(\tau)$ is analytic in the complex strip $\mathbb{T}_{\sigma_0}:=\{\tau\in\mathbb{C},\,\Re(\tau)\in\mathbb{T},\, |\Im(\tau)|<\sigma_0\}$ and continuous on its boundary. Since proofs typically require a finite number of reductions in the analyticity strip, when stating a result we denote by $0<\sigma<\sigma_0$ a width of analyticity for which the conclusion holds.

As for the notation, for a given $2\pi$-periodic function $g$, we denote by $G_\ell $ the sets defined as:
\begin{equation}
\label{def_Gn}
\begin{cases}
G_1=\{m\in \mathbb{Z},\,\, g^{[m]}\neq 0\},
\\
G_\ell =\left\{m\in\mathbb{Z},\,\,m=m_1+m_2+\dots+ m_\ell ,\,\,m_j\in G_1\right\}.
\end{cases}
\end{equation}
These sets will play a crucial role in our approach. The main feature we use is the following result.
\begin{lemma}
Let $g$ be a $2\pi$-periodic function. There exists $n\in \mathbb{N}$ such that $1\in G_{n}$ and $1\notin G_{\ell}$ for $\ell <n$, namely
\begin{equation}
n=n(g):=\min\{\ell \in \mathbb{N} : 1\in G_\ell\}.
\label{eq_definition_n}
\end{equation}
\end{lemma}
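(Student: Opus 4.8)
The substance of the statement is the nonemptiness of the set $A:=\{\ell\in\mathbb{N}:1\in G_\ell\}$; once that is established, $n:=\min A$ exists because $\mathbb{N}$ is well ordered, and then $1\in G_n$ while $1\notin G_\ell$ for every $\ell<n$ by minimality of $n$. So the plan is to show that $1$ can be written as a finite sum of elements of $G_1$.

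I would start from two elementary properties of $G_1=\{m\in\mathbb{Z}:g^{[m]}\neq 0\}$ coming from the standing hypotheses on $g$: since $g$ is real, $g^{[-m]}=\overline{g^{[m]}}$, so $G_1$ is symmetric ($m\in G_1\Leftrightarrow -m\in G_1$); and since $g$ has zero mean, $0\notin G_1$. Here $g\not\equiv 0$, so $G_1\neq\emptyset$ (in the degenerate regime of interest one moreover has $G_1\subseteq\{m:|m|>1\}$). From the very definition $G_\ell+G_{\ell'}\subseteq G_{\ell+\ell'}$, so $\bigcup_{\ell\ge 1}G_\ell$ is closed under addition; it contains $0=m+(-m)\in G_2$ for any chosen $m\in G_1$; and it is closed under negation, since $m_1+\dots+m_\ell\in G_\ell$ forces $(-m_1)+\dots+(-m_\ell)\in G_\ell$ by symmetry. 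Hence $\bigcup_{\ell\ge 1}G_\ell$ is exactly the subgroup of $\mathbb{Z}$ generated by $G_1$, that is $d\mathbb{Z}$ with $d:=\gcd(G_1)\ge 1$.

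It then remains to observe when $1$ lies in this subgroup, i.e.\ when $d=1$, and this is precisely the case we are in: if instead all harmonics of $g$ lay in $d\mathbb{Z}$ with $d>1$, then $g(\tau)=\tilde g(d\tau)$ for some $2\pi$-periodic $\tilde g$, and the substitution $\varepsilon\mapsto\varepsilon/d$ would reduce the problem to one with $\gcd=1$; so we may, and do, assume $\gcd(G_1)=1$. Then $1\in d\mathbb{Z}=\bigcup_{\ell\ge1}G_\ell$, whence $A\neq\emptyset$. Concretely, Bézout's identity provides $m_1,\dots,m_r\in G_1$ and $c_1,\dots,c_r\in\mathbb{Z}$ with $\sum_i c_i m_i=1$; replacing $m_i$ by $-m_i\in G_1$ at the indices where $c_i<0$ exhibits $1$ as a sum of $L:=\sum_i|c_i|$ elements of $G_1$, so $1\in G_L$ and $A$ is nonempty. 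This closes the argument.

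I do not expect any genuine obstacle: the statement is a short exercise in elementary number theory in $\mathbb{Z}$. The only points that deserve a line of comment are the implicit assumption $g\not\equiv 0$ and the normalization $\gcd(G_1)=1$, which is harmless for the reason above but is genuinely needed — without it the claimed $n$ does not exist — so it is best regarded as part of the running hypotheses rather than as something to be proved.
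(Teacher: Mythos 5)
Your argument is correct and is essentially the paper's own proof: both reduce the claim to $\gcd(G_1)=1$, invoke B\'ezout's identity, and use the symmetry $m\in G_1\Leftrightarrow -m\in G_1$ (from $g$ real) to turn the integer combination into a sum with positive coefficients, exhibiting $1\in G_\ell$ for some $\ell$. Your explicit remark that $\gcd(G_1)=1$ must be read as part of the hypothesis (period exactly $2\pi$) matches the paper's ``otherwise the period would be smaller'' step, so there is no substantive difference.
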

\begin{proof} 
We only need to prove that the set $\{\ell \in \mathbb{N}: 1 \in G_{\ell}\}$ is not empty. 
If $g$ only has one harmonic, it has to be $g^{[\pm 1]}$  (otherwise, the period would be smaller), so $n=1$. If $g$ has more than one harmonic, there exist $k_1,\dots,k_m\in G_1$ such that their greatest common divisor is $1$ (otherwise, the period would be smaller). Then, by the generalized Bézout identity there exist $\ell_1,\dots,\ell_m$ such that
$$
k_1\cdot \ell_1+\dots+k_m\cdot \ell_m=1.
$$
Notice that since $g$ is real analytic, if $k_1,\cdots, k_m \in G_1$, also $-k_1, \cdots, -k_m\in G_1$. Then one can assume that $\ell_j>0$, changing if necessary $k_j$ by $-k_j$. Hence, $\ell_*=\ell_1+\dots+\ell_m$ satisfies that $1\in G_{\ell_*}$. 
\end{proof}

\begin{remark}\label{rem:codimension}
We observe that in the space of smooth periodic functions, $\mathcal{S}$, the set 
$\mathcal{E}_0 = \{f\in \mathcal{S}: n(f)=1\}$ is generic, the set 
$\mathcal{E}_1 = \{f\in \mathcal{S}: n(f)=2\}$ has codimension one and, for $s\in\mathbb{N}$, $\mathcal{E}_{s} = \{f\in \mathcal{S}: n(f)=s+1\}$ has codimension $s$. As usual
$\mathcal{E}_s \subset \mathcal{S} \backslash (\mathcal{E}_0 \cup  \cdots \cup \mathcal{E}_{s-1})$.

Using this notation we can reformulate our aim in this paper as finding the splitting distance when the perturbation $g \in \mathcal{E}_{s}$, for some $s\geq 1$.

\end{remark}
\subsection{Main theorems}

In order to state the main results, we first summarize the relevant information about the inner equation~\eqref{eq_inner_equation} associated to the Hamiltonian~\eqref{eq_model}, which we recall is independent of the singular parameter $\varepsilon$. The results can be found in \cite{bib_Inner}.

We introduce some notation. For given $\rho,\theta>0$, let $\mathcal{D}_{\domvar,\theta}^{\mathrm{in},\pm}$ be the complex domains defined as follows
\begin{equation}\label{eq_dom_in_neg}
\mathcal{D}^{\mathrm{in},-}_{\domvar,\theta}=\{z\in\mathbb{C}; |\Im(z)|>\theta\cdot\Re(z)+\domvar\},
\qquad 
\mathcal{D}^{\mathrm{in},+}_{\domvar,\theta}=\{-z\in\mathcal{D}^{\mathrm{in},-}_{\domvar,\theta}\}
\end{equation}
(see \ref{fig_inner_domains}). For $\mu_0>0$, we introduce $B_{\mu_0}=\{\mu \in \mathbb{C} : |\mu |<\mu_0\}$ and for $\sigma>0$ we write $\T_{\sigma}=\{\tau\in\mathbb{C},\Re(\tau)\in\T,\,\, |\Im(\tau)|<\sigma\}\subset\mathbb{C}$.

\begin{figure}[H]
    \centering
    \begin{overpic}[width=\textwidth]{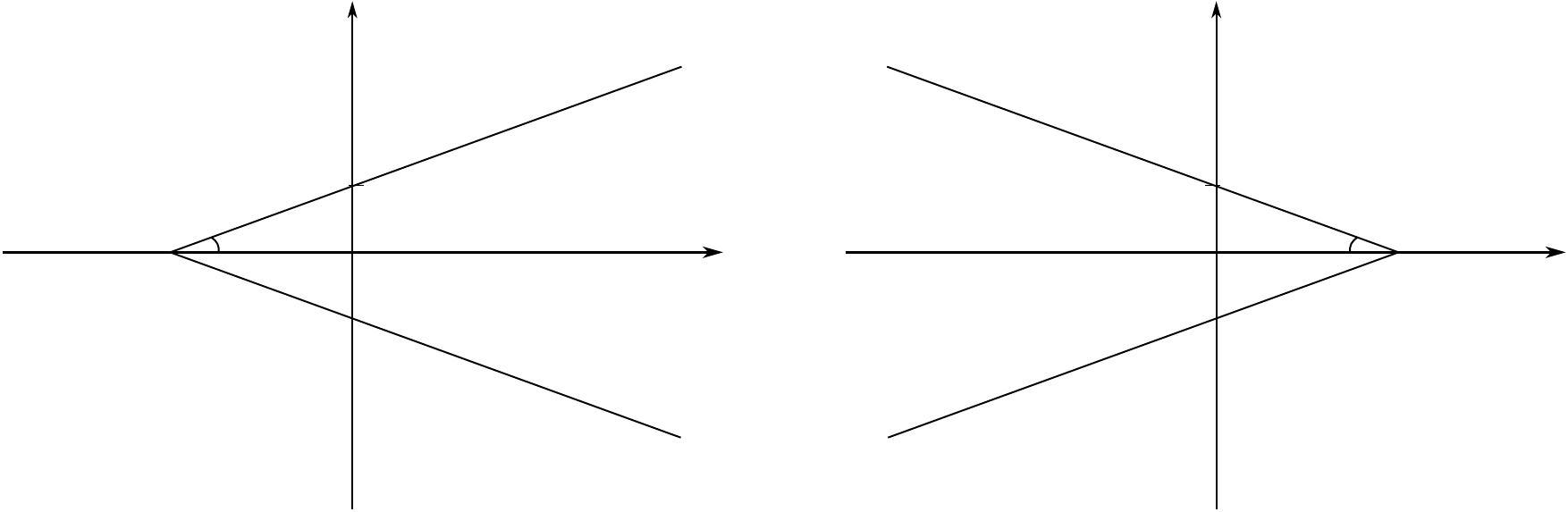}
    \put(20,22){$i\domvar$}
    \put(10,25){$\mathcal{D}^{\mathrm{in},-}_{\domvar,\theta}$}
    \put(15.5,17.2){{\footnotesize$\arctan(\theta)$}}
    \put(78,22){$i\domvar$}
    \put(90,25){$\mathcal{D}^{\mathrm{in},+}_{\domvar,\theta}$}
    \put(78,17.2){{\footnotesize$\arctan(\theta)$}}
    \end{overpic}
    \caption{Domains $\mathcal{D}^{\mathrm{in},+}_{\domvar,\theta}$ and $\mathcal{D}^{\mathrm{in},-}_{\domvar,\theta}$.}
    \label{fig_inner_domains}
\end{figure}

Now we consider the domain
\begin{equation}
\mathcal{D}^{\mathrm{in}}_{\domvar,\theta}=\mathcal{D}^{\mathrm{in},+}_{\domvar,\theta}\cap\mathcal{D}^{\mathrm{in},-}_{\domvar,\theta}\cap\{\Im(z)<0\}.
\label{eq_inner_domain}
\end{equation}
In this domain we can state the following result by paraphrasing~\cite{bib_Inner}.
\begin{theorem}[\cite{bib_Inner}]
Fix $\mu_0>0$ and $0<\arctan(\theta)<\frac{\pi}{2}$. For any periodic real analytic function $g$, there exist $\domvar_0$, $\sigma>0$ and $M=M(\domvar_0,\mu_0,\theta)$ such that $\forall \mu\in B_{\mu_0}$, $\domvar \ge \domvar_0$,  the inner equation \eqref{eq_inner_equation} has analytic solutions $\psi^{\pm}(z,\tau,\mu)$ defined in $\mathcal{D}^{\mathrm{in},\pm}_{\domvar,\theta}\times \T_\sigma\times B_{\mu_0}$, whose derivatives are uniquely determined by the condition that:
$$
|\partial_z\psi^{\pm}(z,\tau,\mu)|<M\cdot \frac{|\mu|}{|z|^3}, \qquad (z,\tau,\mu) \in D^{\mathrm{in},\pm}_{\domvar,\theta}\times \T_\sigma \times B_{\mu_0}.
$$

In addition, 
there exists an analytic function $\mathfrak{g}(z,\tau,\mu)$ defined in $\mathcal{D}_{\domvar,\theta}^{\mathrm{in}} \times \T_\sigma\times B_{\mu_0}$ satisfying  
$|\mathfrak{g}(z,\tau,\mu)|\leq M\cdot |z|^{-1}$
and such that the difference $\Delta_{\mathrm{in}}(z,\tau,\mu):=\psi^-(z,\tau,\mu)-\psi^+(z,\tau,\mu)$ is given in $\mathcal D^{\mathrm{in}}_{\domvar,\theta}\times \T_\sigma \times B_{\mu_0}$ by:
\begin{equation}
\Delta_{\mathrm{in}}(z,\tau,\mu)=\sum_{k<0}\chi^{[k]}(\mu)\cdot e^{ik(z-\tau+\mu\mathfrak{g}(z,\tau,\mu))},
\label{eq_def_Delta_inner}
\end{equation}
where $\chi^{[k]}(\mu)$ are analytic functions of $\mu$.
\label{teo_inner_existence}
\end{theorem}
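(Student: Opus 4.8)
The plan is to follow the three-stage strategy of \cite{bib_Inner}: construct $\psi^{\pm}$ by a fixed-point argument on the sectorial domains $\mathcal{D}^{\mathrm{in},\pm}_{\domvar,\theta}$; check that the derivatives are uniquely pinned down by the stated bound; and then straighten the linear transport equation satisfied by $\psi^--\psi^+$ on the overlap so as to read off its Fourier expansion. The linear object driving everything is the transport operator $\mathcal{L}=\partial_\tau+\partial_z$, with characteristics $z-\tau=\mathrm{const}$, which admits a right inverse $\mathcal{L}^{-1}$ given by integration along these complex lines; the integration contour is pushed to infinity inside $\mathcal{D}^{\mathrm{in},\pm}_{\domvar,\theta}$ in the direction that forces decay of the solution, and it is precisely the two admissible directions that distinguish $\psi^-$ from $\psi^+$.

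For the existence, I would differentiate \eqref{eq_inner_equation} in $z$ to turn it into a fixed-point equation for $\phi:=\partial_z\psi$,
\[
\phi=\mathcal{L}^{-1}\!\Big(\tfrac14 z\phi^2+\tfrac14 z^2\phi\,\partial_z\phi+\tfrac{4\mu g(\tau)}{z^3}\Big),
\]
and solve it by contraction on a ball $\{\|\,|z|^3\phi\,\|_\infty\le C|\mu|\}$ of the corresponding weighted space of analytic functions, for $\domvar\ge\domvar_0$ large and $\mu\in B_{\mu_0}$. The forcing $4\mu g/z^3$ has size $O(|\mu|\,|z|^{-3})$, and — this is the one place the zero mean of $g$ is used — being purely oscillatory in $\tau$, a single integration by parts along the characteristic shows that $\mathcal{L}^{-1}$ returns it at the \emph{same} order $O(|\mu|\,|z|^{-3})$ rather than the $O(|z|^{-2})$ one would predict from the size alone; the quadratic terms then land at $O(|\mu|^2\,|z|^{-5})$ and are absorbed for $\domvar_0$ large. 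Uniform convergence of the iteration gives analyticity in $(z,\tau,\mu)$ together with the bound, and $\psi^{\pm}$ itself is recovered by integrating $\phi^{\pm}$ and using the equation. Uniqueness of $\partial_z\psi^{\pm}$ within the bound follows because the difference of two such solutions obeys a linear homogeneous equation of the same type, killed by the contraction estimate.

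For the difference $\Delta_{\mathrm{in}}:=\psi^--\psi^+$, subtracting the two inner equations on $\mathcal{D}^{\mathrm{in}}_{\domvar,\theta}$ yields the \emph{linear} transport equation $\partial_\tau\Delta_{\mathrm{in}}+(1-a)\partial_z\Delta_{\mathrm{in}}=0$ with $a:=\tfrac18 z^2(\partial_z\psi^-+\partial_z\psi^+)=O(|\mu|/|z|)$, defined only on the overlap because $a$ sees both solutions. The idea is to find a near-identity characteristic variable $w=z-\tau+\mu\mathfrak{g}(z,\tau,\mu)$ that is also a first integral of $\partial_\tau+(1-a)\partial_z$; a short computation shows this amounts to the linear transport equation $\partial_\tau\mathfrak{g}+(1-a)\partial_z\mathfrak{g}=a/\mu$, whose source $a/\mu$ is analytic in $\mu$ (as $a$ vanishes at $\mu=0$), $2\pi$-periodic in $\tau$, oscillatory, and of size $O(|z|^{-1})$, so that integration along the (slightly perturbed) characteristics from $z\to-i\infty$, once again with an integration by parts, produces an analytic, $2\pi$-periodic in $\tau$ solution $\mathfrak{g}$ with $|\mathfrak{g}|\le M/|z|$. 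Since $\Delta_{\mathrm{in}}$ and $w$ are both first integrals of the same vector field and $\partial_z w=1+O(\mu)\neq 0$, we get $\Delta_{\mathrm{in}}=F(w,\mu)$ for some function $F$; $2\pi$-periodicity in $\tau$ (using periodicity of $\mathfrak{g}$, so that $w\mapsto w-2\pi$ under $\tau\mapsto\tau+2\pi$) forces $F$ to be $2\pi$-periodic in $w$, hence a Fourier series $\sum_k\chi^{[k]}(\mu)e^{ikw}$; and because $\Delta_{\mathrm{in}}\to 0$ while $\Im w\to-\infty$ on $\mathcal{D}^{\mathrm{in}}_{\domvar,\theta}$, the modes with $k\ge 0$ must vanish, leaving \eqref{eq_def_Delta_inner}. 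The coefficients $\chi^{[k]}(\mu)$, extracted one at a time as uniform limits (as $\Im z\to-\infty$) of functions analytic in $\mu$, are themselves analytic in $\mu$.

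The main obstacle is the pair of sharp decay estimates — $|\partial_z\psi^{\pm}|\lesssim|\mu|/|z|^3$ and $|\mathfrak{g}|\lesssim 1/|z|$ — which genuinely rely on the oscillatory cancellation coming from $g$ having zero mean, together with a careful choice of integration contours inside the sectorial domains so that the relevant exponentials stay bounded along them; once these are in hand, the contraction, the straightening of the difference equation, and the Fourier extraction are comparatively routine.
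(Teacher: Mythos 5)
This statement is not proved in the paper you were given: Theorem \ref{teo_inner_existence} is quoted by paraphrasing \cite{bib_Inner}, so there is no internal proof to compare against, and the relevant benchmark is that reference. Your sketch follows essentially the same route as \cite{bib_Inner}: a contraction argument for $\partial_z\psi^{\pm}$ in weighted spaces of functions analytic on the sectorial inner domains, using the right inverse of $\partial_\tau+\partial_z$ given by integration along characteristics towards the admissible infinity (this choice of direction is what distinguishes $\psi^-$ from $\psi^+$), with the zero mean of $g$ providing the oscillatory gain that keeps the solution at order $|\mu|\,|z|^{-3}$; and then a straightening of the linear transport equation satisfied by $\Delta_{\mathrm{in}}$ on the overlap via the first integral $z-\tau+\mu\mathfrak{g}$, with $2\pi$-periodicity plus decay as $\Im z\to-i\infty$ eliminating the harmonics $k\geq 0$. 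Two small repairs are needed in your outline: (i) the source $a/\mu$ in the equation for $\mathfrak{g}$ is \emph{not} purely oscillatory — its $\tau$-average is nonzero at order $\mu$, coming from the averaged part of $\partial_z\psi^{\pm}$ — but that average is $\mathcal{O}(|z|^{-2})$, so direct integration along characteristics still gives $|\mathfrak{g}|\leq M|z|^{-1}$ and your conclusion stands; (ii) the uniqueness assertion requires the remark that any solution obeying the stated bound and decaying along the chosen characteristic direction satisfies the same fixed-point equation, so it lies in the ball where the operator is contractive, and relatedly the vanishing of the $k=0$ mode of $\Delta_{\mathrm{in}}$ uses the normalization of the free constants in $\psi^{\pm}$ (only their derivatives are unique). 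With these points made precise, your proposal reproduces the standard proof of the cited result; the genuinely technical content (Fourier norms on the inner domains, estimates for the right inverse, analyticity in $\mu$ of the $\chi^{[k]}$) is, as you acknowledge, routine but lengthy.
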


Our first result, Theorem \ref{teo_main_second} below, relates the behaviour of $\chi^{[-1]}$ for $|\mu|$ small with the degree of degeneracy ($n=n(g)$) of the periodic perturbation $g$ (see Remark \ref{rem:codimension}).

\begin{theorem}
Let $g$ be a real analytic periodic function and $n=n(g)$ be defined as in~\eqref{eq_definition_n}. We consider  
$\chi^{[k]}(\mu)$ defined by~\eqref{eq_def_Delta_inner} and $\Delta^{[k]}_{\mathrm{in}}(z,\mu)$, the $k$-th coefficient in the Fourier series of $\Delta_{\mathrm{in}}$, namely
\begin{equation}
\Delta_{\mathrm{in}}(z,\tau,\mu) = \sum_{k\in \mathbb{Z}} \Delta_{\mathrm{in}}^{[k]}(z,\mu)\cdot e^{ik\tau}.
\label{def_Fourier_Deltain}
\end{equation}
Then one has:
\begin{enumerate}
    \item $\partial_\mu^j \chi^{[-1]}(0)=0$ for $j=1,\dots,n-1$ and therefore
 \begin{equation}
 \chi^{[-1]}(\mu)=  \chi^{[-1]}_n\mu^n+\mathcal{O}(\mu^{n+1} ).
 \label{eq_expansion_chi}
 \end{equation}
\item The coefficient $\chi_n^{[-1]}$, which only depends on the Fourier coefficients $\{g^{[k]}\}_{k\in \mathbb{Z}}$ with the dependence being analytic, can be computed as

\begin{equation}
\label{eq_chi_n_formula}
\chi_{n}^{[-1]} = \frac{1}{n!}\lim_{z\to -i\infty} e^{iz}  \cdot \partial_{\mu}^n \Delta_{\mathrm{in}}^{[1]}(z) .
\end{equation}

Furthermore, for the special cases $n=1,2$ we have that

$$
 \chi^{[-1]}_1=4\pi g^{[1]}, \qquad \chi^{[-1]}_2=-\frac{4\pi}{3}\sum_{k>1}\frac{g^{[k]}\cdot g^{[1-k]}}{k(1-k)},
$$

where we observe that 
$ 
\chi_2^{[-1]}= \frac{2\pi}{3}(G^2)^{[1]}$, 
with $G(\tau)$ a primitive of $g(\tau)$.
\end{enumerate}
\label{teo_main_second}
\end{theorem}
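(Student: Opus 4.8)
The plan is to exploit the analyticity in $\mu$ of the inner solutions $\psi^{\pm}$ provided by Theorem~\ref{teo_inner_existence}. Writing $\psi^{\pm}(z,\tau,\mu) = \sum_{j\geq 1} \psi^{\pm}_j(z,\tau)\,\mu^j$ (the sum starts at $j=1$ because $\psi^\pm = \mathcal{O}(\mu)$ by the quantitative bound on $\partial_z\psi^\pm$), the inner equation~\eqref{eq_inner_equation} becomes a hierarchy of linear equations for the $\psi^\pm_j$. First I would establish the key structural fact by induction on $j$: the Fourier spectrum of $\partial_z\psi^\pm_j(z,\cdot)$ is contained in $G_1\cup G_2\cup\cdots\cup G_j$ (or, more precisely, in $\bigcup_{\ell\le j} G_\ell$). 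At order $\mu^1$ the equation is $\partial_\tau\psi^\pm_1 + \partial_z\psi^\pm_1 = -2g(\tau)/z^2$, whose forcing has spectrum exactly $G_1$; at order $\mu^j$ the nonlinear term $\frac{1}{8}z^2(\partial_z\psi)^2$ contributes $\frac{1}{8}z^2\sum_{\ell=1}^{j-1}\partial_z\psi^\pm_\ell\,\partial_z\psi^\pm_{j-\ell}$, whose spectrum sits in $\bigcup_{a\le \ell}G_a + \bigcup_{b\le j-\ell}G_b \subseteq \bigcup_{c\le j}G_c$ by the additive definition~\eqref{def_Gn}, and solving the transport operator $\partial_\tau + \partial_z$ does not enlarge the spectrum. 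This immediately gives part (1): since $\Delta_{\mathrm{in}} = \psi^- - \psi^+$ has the same spectral containment, its Fourier coefficient $\Delta_{\mathrm{in}}^{[1]}$ has a $\mu$-expansion whose first $n-1$ terms vanish because $1\notin G_\ell$ for $\ell<n$; and by the representation~\eqref{eq_def_Delta_inner}, the coefficient $\chi^{[-1]}(\mu)$ is read off (up to the factor $e^{i\mu\mathfrak g}$, which at the relevant limit $z\to-i\infty$ and to leading order in $\mu$ contributes trivially) from $\Delta_{\mathrm{in}}^{[1]}$, so $\partial_\mu^j\chi^{[-1]}(0)=0$ for $j=1,\dots,n-1$.

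For part (2), the formula~\eqref{eq_chi_n_formula} follows by taking the $k=-1$ (equivalently, via reality, the $k=1$) Fourier mode of~\eqref{eq_def_Delta_inner}, differentiating $n$ times in $\mu$, evaluating at $\mu=0$, and multiplying by $e^{iz}$: the phase $\mu\mathfrak g$ and all $\chi^{[k]}$ with $k\neq-1$ drop out in the limit $z\to-i\infty$ (using the decay $|\mathfrak g|\le M/|z|$ and that $e^{ikz}e^{iz}\to 0$ for $k<-1$ as $\Im z\to-\infty$), leaving exactly $n!\,\chi_n^{[-1]} = \lim_{z\to-i\infty}e^{iz}\partial_\mu^n\Delta_{\mathrm{in}}^{[1]}(z)$. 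The analytic dependence of $\chi_n^{[-1]}$ on $\{g^{[k]}\}$ is inherited from the fact that each $\psi^\pm_j$ depends polynomially (hence analytically) on the finitely many Fourier coefficients that enter at that order.

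The explicit cases $n=1,2$ require actually solving the first two orders. For $n=1$: solve $\partial_\tau\psi^\pm_1+\partial_z\psi^\pm_1 = -2g(\tau)/z^2$ mode by mode; for the harmonic $e^{ik\tau}$ one gets an ODE $(\partial_z + ik)\widehat\psi^\pm_{1,k} = -2g^{[k]}/z^2$, solved by $\widehat\psi^\pm_{1,k}(z) = e^{-ikz}\int^z e^{ik s}(-2g^{[k]}/s^2)\,ds$ with the integration contour/constant chosen so that the two solutions $\psi^+$ (constant at $+i\infty$ along the appropriate ray) and $\psi^-$ (at $-i\infty$) satisfy the prescribed decay; the difference picks up the residue-type contribution and, extracting $\lim_{z\to-i\infty}e^{iz}$ of the $k=1$ mode, yields $\chi_1^{[-1]}=4\pi g^{[1]}$, consistent with the known Melnikov value stated in the excerpt. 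For $n=2$: now $1\notin G_1$, so $\partial_z\psi^\pm_{1}$ has no $e^{\pm i\tau}$ mode, and the order-$\mu^2$ equation $\partial_\tau\psi^\pm_2 + \partial_z\psi^\pm_2 = \frac{1}{8}z^2(\partial_z\psi^\pm_1)^2$ is forced, in its $e^{i\tau}$ component, by $\frac{1}{8}z^2\sum_{k}\partial_z\widehat\psi^\pm_{1,k}\,\partial_z\widehat\psi^\pm_{1,1-k}$; carrying out this convolution, solving the transport ODE for the first harmonic, and again extracting $\frac1{2!}\lim_{z\to-i\infty}e^{iz}$ gives the stated $\chi_2^{[-1]} = -\frac{4\pi}{3}\sum_{k>1}\frac{g^{[k]}g^{[1-k]}}{k(1-k)}$, which one recognizes as $\frac{2\pi}{3}(G^2)^{[1]}$ since the Fourier coefficients of $G=\int g$ are $g^{[k]}/(ik)$ and $(G^2)^{[1]} = \sum_k \frac{g^{[k]}}{ik}\cdot\frac{g^{[1-k]}}{i(1-k)}$.

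The main obstacle I expect is the careful bookkeeping of the \emph{integration constants / normalization} when solving each linear transport equation on the unbounded inner domains $\mathcal D^{\mathrm{in},\pm}_{\domvar,\theta}$: the decay condition $|\partial_z\psi^\pm|<M|\mu|/|z|^3$ must be used to pin down uniquely which primitive to take at each step, and one must verify that the iterative scheme stays within the required bounds (so that the formal $\mu$-series is the genuine analytic expansion of the solution from Theorem~\ref{teo_inner_existence}, not merely a formal object). A secondary technical point is justifying that the limit $z\to-i\infty$ in~\eqref{eq_chi_n_formula} commutes with $\partial_\mu^n$ and with extracting a single Fourier mode; this should follow from uniform (in $\mu$ on $B_{\mu_0}$, in $\tau$ on $\T_\sigma$) bounds on the $\psi^\pm_j$ together with Cauchy estimates, but it needs to be stated cleanly. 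The spectral-containment induction itself is conceptually the heart of the argument and is where the combinatorial definition of the $G_\ell$ earns its keep.
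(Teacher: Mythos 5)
Your proposal is correct and follows essentially the same route as the paper: a power expansion of $\psi^{\pm}$ in $\mu$, an induction on the Fourier support of each order using the sets $G_\ell$ and the decay conditions to kill homogeneous solutions, the extraction $\chi^{[-1]}(\mu)=\lim_{z\to-i\infty}e^{iz}\Delta_{\mathrm{in}}^{[1]}(z,\mu)$ by showing the $e^{ik\mu\mathfrak g}-1$ correction is negligible in that limit, and explicit residue computations at orders $\mu$ and $\mu^2$. Your spectral containment in $\bigcup_{\ell\le j}G_\ell$ is slightly weaker than the paper's containment in $G_j$ but still suffices since $1\notin G_\ell$ for all $\ell<n$, and the technical points you flag (uniqueness of the primitives from the decay bound, interchanging $z\to-i\infty$ with $\partial_\mu^n$ via uniform bounds and Cauchy estimates) are exactly the ones the paper handles.
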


We present now the result concerning the splitting distance $\Delta(u,\tau,\mu)$ defined in~\eqref{eq_def_Delta_outer}.

\begin{theorem}\label{th:maintheorem}

Let $g$ be a real analytic periodic function, $n=n(g)$ be defined as in~\eqref{eq_definition_n} and take
$\rho>0$. 
Then there exist $\mu_0,\varepsilon_0$ such that  $\forall \mu\in(-\mu_0,\mu_0)$,   $\varepsilon\in(0,\varepsilon_0)$, $u\in (-\rho,\rho)$ and $\tau \in [0,2\pi]$, the function $\Delta$ defined in~\eqref{eq_def_Delta_outer} satisfies
\begin{align}
\partial_u\Delta(u,\tau,\mu)&= \frac{2e^{-\frac{\pi}{2\varepsilon}}}{\varepsilon^2}\cdot \left[\Im\left( \chi^{[-1]}(\mu)\cdot e^{i(\tau - u/\varepsilon)}\right)+
\mathcal{O}\left(|\mu|\cdot e^{-\frac{\pi}{2\varepsilon}}\right)+\mathcal{O}\left(\frac{|\mu|^n}{\log(1/\varepsilon)}\right)\right]
\label{eq_asymptotic_formula_theorem}
\end{align}
where $\chi^{[-1]}(\mu) =\chi_n^{[-1]}\mu^n  + O(\mu^{n+1})$ is the analytic function given in Theorem \ref{teo_main_second}.

In particular,  
\begin{enumerate}
    \item 
    If $n(g)=1$ then $g^{[1]}\neq 0$ and we have the following asymptotic formula:

 \begin{equation}
\partial_u\Delta(u,\tau,\mu)=\frac{2e^{-\frac{\pi}{2\varepsilon}}}{\varepsilon^2}\cdot\left[4\pi \Im\left(g^{[1]} \cdot e^{i\left(\tau-u/\varepsilon\right)}\right)\cdot\mu+\mathcal{O}(|\mu|^2)+\mathcal{O}\left(|\mu|\cdot e^{-\frac{\pi}{2\varepsilon}}\right)+\mathcal{O}\left(\frac{|\mu|}{\log(1/\varepsilon)}\right)\right].
\label{eq_formula_n_1}
\end{equation}
\item 
If $n(g)=2$ (and consequently $g^{[1]}=0$) we have:

\begin{equation}
\partial_u\Delta(u,\tau,\mu)=\frac{2e^{-\frac{\pi}{2\varepsilon}}}{\varepsilon^2}\cdot\left[\frac{2\pi}{3} \Im\left((G^2)^{[1]}\cdot e^{i\left(\tau-u/\varepsilon\right)}\right)\cdot \mu^2+\mathcal{O}(|\mu|^3)+\mathcal{O}\left(|\mu|\cdot e^{-\frac{\pi}{2\varepsilon}}\right)+\mathcal{O}\left(\frac{|\mu|^2}{\log(1/\varepsilon)}\right)\right],
\label{eq_formula_n_2}
\end{equation}
which is also  an asymptotic formula  when $(G^2)^{[1]}\neq 0$.
\end{enumerate}
\label{thm_main}
\end{theorem}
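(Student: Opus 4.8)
The plan is to combine the exponentially small splitting machinery (inner--outer matching) with a short analyticity argument in $\mu$: the former produces a splitting formula whose error is a priori only $\mathcal O(|\mu|)$ relative to the exponentially small prefactor, and the latter upgrades that error to $\mathcal O(|\mu|^n)$ on the first harmonic. As a first step, following the inner--outer matching methodology (as in~\cite{bib_exp_small}) and feeding in Theorem~\ref{teo_inner_existence}, I would establish a preliminary form of~\eqref{eq_asymptotic_formula_theorem} valid for \emph{complex} $\mu\in B_{\mu_0}$, $\varepsilon\in(0,\varepsilon_0)$, $u\in(-\rho,\rho)$ and $\tau\in\T_\sigma$:
\[
\partial_u\Delta(u,\tau,\mu)=\frac{2e^{-\frac{\pi}{2\varepsilon}}}{\varepsilon^2}\Big[\mathcal I(u,\tau,\mu)+R(u,\tau,\mu)\Big],
\]
where $\mathcal I(u,\tau,\mu)=\tfrac{1}{2i}\big(\chi^{[-1]}(\mu)e^{i(\tau-u/\varepsilon)}-\chi^{[1]}(\mu)e^{-i(\tau-u/\varepsilon)}\big)$ with $\chi^{[1]}(\mu):=\overline{\chi^{[-1]}(\bar\mu)}$ (so that $\mathcal I=\Im(\chi^{[-1]}(\mu)e^{i(\tau-u/\varepsilon)})$ for real $\mu$, by the reality of the system), $R$ is analytic in $\mu$ and vanishes at $\mu=0$, with $|R(u,\tau,\mu)|\le C(|\mu|e^{-\frac{\pi}{2\varepsilon}}+|\mu|/\log(1/\varepsilon))$, and the modes $|k|\ge2$ obey $|(\partial_u\Delta)^{[k]}(u,\mu)|\le C|\mu|e^{-|k|\pi/(2\varepsilon)}/\varepsilon^2$. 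Here $\mathcal I$ is the contribution of the $k=-1$ mode of $\Delta_{\mathrm{in}}$ matched at the two nearest singularities $u=\pm i\pi/2$ of the homoclinic~\eqref{eq_homoclinica} (whence the factor $e^{-\frac{\pi}{2\varepsilon}}$), the higher modes of $\Delta_{\mathrm{in}}$ and of $\partial_u\Delta$ carry an extra $e^{-\frac{\pi}{2\varepsilon}}$, the $\mathcal O(|\mu|/\log(1/\varepsilon))$ term is the inner--outer matching error, and analyticity in $\mu$ is inherited from~\eqref{eq_model} and~\eqref{eq_inner_equation}. This step carries the full exponentially small splitting machinery and is where the main technical effort lies.

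The new ingredient is a spectral observation about~\eqref{eq:HJnova}. Writing the generating functions as $S^{\mathrm u,\mathrm s}=\sum_{j\ge0}S^{\mathrm u,\mathrm s}_j\mu^j$ and substituting into~\eqref{eq:HJnova} yields a triangular hierarchy in which $S^{\mathrm u,\mathrm s}_j$ solves a linear equation whose operator $\partial_xT_0\,\partial_x(\cdot)+\tfrac1\varepsilon\partial_\tau(\cdot)$ --- together with the boundary conditions --- acts diagonally on $\tau$-Fourier modes, and whose source is $(\cos x-1)g$ when $j=1$ and $-\tfrac12\sum_{i=1}^{j-1}\partial_xS^{\mathrm u,\mathrm s}_i\,\partial_xS^{\mathrm u,\mathrm s}_{j-i}$ when $j\ge2$. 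An induction on $j$, using $G_i+G_{j-i}=G_j$, then shows that the $\tau$-Fourier spectrum of $S^{\mathrm u,\mathrm s}_j$, hence of $\Delta_j:=T^{\mathrm u}_j-T^{\mathrm s}_j$ and of $\partial_u\Delta_j$, is contained in $G_j$ (see~\eqref{def_Gn}). Since $1\notin G_j$ for $j<n=n(g)$ whereas $1\in G_n$, the $\tau$-Fourier coefficients $(\partial_u\Delta)^{[\pm1]}(u,\cdot)$ vanish to order $n$ at $\mu=0$. This step is elementary.

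Finally I would close with a maximum-modulus estimate in $\mu$. By Theorem~\ref{teo_main_second}, $\chi^{[-1]}(\mu)=\chi^{[-1]}_n\mu^n+\mathcal O(\mu^{n+1})$, and hence so does $\chi^{[1]}(\mu)=\overline{\chi^{[-1]}(\bar\mu)}$; thus the $k=\pm1$ modes of $\mathcal I$ vanish to order $\mu^n$, and combined with the previous paragraph the $k=\pm1$ modes $R^{[\pm1]}(u,\mu)$ of $R$ are analytic on $B_{\mu_0}$ and vanish to order $\mu^n$ at the origin. Since $|R^{[\pm1]}(u,\mu)|\le\sup_{\tau}|R(u,\tau,\mu)|\le C_1|\mu|/\log(1/\varepsilon)$ on $B_{\mu_0}$ for $\varepsilon$ small, applying the maximum principle to $\mu\mapsto R^{[\pm1]}(u,\mu)/\mu^n$ on the circle $|\mu|=\mu_0/2$ gives $|R^{[\pm1]}(u,\mu)|\le C_2|\mu|^n/\log(1/\varepsilon)$ for $|\mu|\le\mu_0/2$; the modes $|k|\ge2$ of $\partial_u\Delta$ sum, on real $\tau$, to $\mathcal O\big(\tfrac{e^{-\pi/(2\varepsilon)}}{\varepsilon^2}|\mu|e^{-\pi/(2\varepsilon)}\big)$. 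Reassembling and restricting to real $\mu$ in a renamed interval $(-\mu_0,\mu_0)$, where $\mathcal I=\Im(\chi^{[-1]}(\mu)e^{i(\tau-u/\varepsilon)})$, gives~\eqref{eq_asymptotic_formula_theorem}. The cases $n=1,2$ follow by inserting $\chi^{[-1]}_1=4\pi g^{[1]}$, $\chi^{[-1]}_2=\tfrac{2\pi}{3}(G^2)^{[1]}$ from Theorem~\ref{teo_main_second}; and when $\chi^{[-1]}_n\neq0$ (automatic for $n=1$, since then $g^{[1]}\neq0$) the leading term dominates the three errors as $\mu,\varepsilon\to0$ --- the error/leading ratios being $\mathcal O(|\mu|)$, $\mathcal O(|\mu|^{1-n}e^{-\pi/(2\varepsilon)})$ and $\mathcal O(1/\log(1/\varepsilon))$, the middle one still tending to $0$ for $\mu$ fixed or $\mu=\mathcal O(\varepsilon^m)$ --- so~\eqref{eq_formula_n_1} and~\eqref{eq_formula_n_2} are asymptotic formulas when $g^{[1]}\neq0$, resp.~$(G^2)^{[1]}\neq0$.
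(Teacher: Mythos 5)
Your overall architecture coincides with the paper's (quote the machinery of \cite{bib_exp_small} to get \eqref{eq_formula_diff_outer} and the splitting formula \eqref{eq_known_splitting_inner} with error $\mathcal{O}(|\mu|/\log(1/\varepsilon))$; run the Fourier--support induction through the Hamilton--Jacobi hierarchy, which is exactly Lemma \ref{lemma_Fourier_coeff_outer}; then upgrade the error by analyticity in $\mu$ via a maximum--modulus/Schwarz argument, as in Lemma \ref{lemma_bound_tail}). However, the step that carries the real difficulty is asserted rather than proved, and as stated it is false: the claimed bound $|(\partial_u\Delta)^{[k]}(u,\mu)|\le C|\mu|e^{-|k|\pi/(2\varepsilon)}/\varepsilon^2$ for the $\tau$-modes $|k|\ge 2$ (and your silence on the mode $k=0$). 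The representation $\Delta=\sum_k\Upsilon^{[k]}(\mu)\,e^{ik(u/\varepsilon-\tau+\mathcal{C}(u,\tau,\mu))}$ of Theorem \ref{teo_difference_manifolds} aligns the exponential smallness with the \emph{corrected} phase, not with the $\tau$-harmonics: expanding $e^{ik\mathcal{C}}$ spills the dominant $k=\pm1$ components into every $\tau$-mode with only an $\mathcal{O}(\mu)$ penalty, not an extra $e^{-\frac{\pi}{2\varepsilon}}$. For instance the $\tau$-modes $k=0,\pm2$ of $\partial_u\Delta$ contain pieces of size roughly $|\Upsilon^{[\pm1]}(\mu)|\cdot|\mu|$, i.e.\ $\sim|\mu|^{n+1}e^{-\frac{\pi}{2\varepsilon}}/\varepsilon$ (and, before one knows anything finer about $\Upsilon^{[\pm1]}_j$ for $j<n$, even $\sim|\mu|^{2}e^{-\frac{\pi}{2\varepsilon}}/\varepsilon$), which exceeds $C|\mu|e^{-2\frac{\pi}{2\varepsilon}}/\varepsilon^2$ throughout the relevant regime $e^{-\frac{\pi}{2\varepsilon}}\ll|\mu|$. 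These pieces are also not covered by the budget $\mathcal{O}(|\mu|e^{-\frac{\pi}{2\varepsilon}})$ you assign to the modes $|k|\ge2$: they must be absorbed into the $\mathcal{O}(|\mu|^n/\log(1/\varepsilon))$ term, and your Schwarz step, applied only to the harmonics $k=\pm1$ of $R$, cannot do that.

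What is needed, and what the paper spends its main technical effort on, is precisely the control of this harmonic mixing. First, the Schwarz-type Lemma \ref{lemma_bound_tail} is applied to the \emph{full} difference $\partial_u\Delta-\partial_u\delta_0$ using the global bound coming from \eqref{eq_known_splitting_inner}, so that the order-$\ge n$ part of \emph{every} $\tau$-harmonic lands in $\mathcal{O}(|\mu|^n e^{-\frac{\pi}{2\varepsilon}}/(\varepsilon^2\log(1/\varepsilon)))$ (Proposition \ref{prop_bound_tail}). Second, for the orders $j<n$ one cannot stop at $\Delta_j^{[\pm1]}\equiv0$: because of $\mathcal{C}$ this does \emph{not} give $\Upsilon_j^{[\pm1]}=0$; the paper derives the explicit mixing formula for $\Delta_j^{[k]}$ in terms of $\Upsilon_\nu^{[m]}$ and Fourier coefficients of powers of $\mathcal{C}$ (Lemma \ref{lem:explicit_formula_Deltajk}), and then an induction (Lemma \ref{lemma_bound_upsilon_k_1}, using Lemma \ref{lemma_bound_upsilon_no_tilde}) shows that $\Upsilon_j^{[\pm1]}$ is itself of size $e^{-2(\frac{\pi}{2\varepsilon}-\domvar-M/\domvar)}$ for $j<n$, whence \emph{all} harmonics of $\partial_u\Delta_j$, $j<n$, are doubly exponentially small on the reals (Proposition \ref{prop_higher_exponential}). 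Your steps (i) and (ii), and the Schwarz estimate on $R^{[\pm1]}$, are sound and match Theorems \ref{teo_existence_outer}--\ref{teo_difference_manifolds}, Lemma \ref{lemma_Fourier_coeff_outer} and Lemma \ref{lemma_bound_tail}; but without an argument replacing Lemmas \ref{lem:explicit_formula_Deltajk} and \ref{lemma_bound_upsilon_k_1} (or, alternatively, applying the Schwarz step to all harmonics at once as the paper does) the assembly does not close.
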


\begin{remark} 
In the set of functions $g$ belonging to $\mathcal{E}_{n-1}$ (see Remark \ref{rem:codimension}) we find a generic subset, namely $\{ g \in \mathcal{E}_{n-1} : \chi_{n}^{[-1]}\neq 0\}$, such that Theorem
\ref{th:maintheorem}  provides a first order asymptotic formula for the splitting distance

\begin{equation}\label{formula:asymtotic}
\partial_u\Delta(u,\tau,\mu)=\frac{2e^{-\frac{\pi}{2\varepsilon}}}{\varepsilon^2}\cdot\left[
\Im\left( \chi^{[-1]}_n \cdot e^{i(\tau-u/\varepsilon)}\right)\cdot\mu^n +
\mathcal{O}(|\mu|^{n+1} )+
\mathcal{O}\left(|\mu|\cdot e^{-\frac{\pi}{2\varepsilon}}\right)+\mathcal{O}\left(\frac{|\mu|^n}{\log(1/\varepsilon)}\right)\right]
\end{equation}
when $|\mu|^n\gg |\mu|\cdot e^{-\frac{\pi}{2\varepsilon}}$, which occurs, for instance, in the natural setting $\varepsilon>0$ small and $\mu=\mathcal{O}(\varepsilon^m)$ with $m> 0$.
\label{rm_asympt_form}
\end{remark} 

\begin{remark} 
Our result improves the formula for $\partial_u\Delta(u,\tau,\mu)$ in \cite{bib_exp_small} for equation \eqref{eq_model}, which reads

\begin{equation}
\partial_u\Delta(u,\tau,\mu)=\frac{2e^{-\frac{\pi}{2\varepsilon}}}{\varepsilon^2}\cdot \left[4\pi \Im\left(g^{[1]}\cdot e^{i\left(\tau-u/\varepsilon\right)}\right)\cdot\mu+\mathcal{O}(|\mu|^2)+\mathcal{O}\left(\frac{|\mu|}{\log(1/\varepsilon)}\right)\right].
\label{eq_original_splitting_eq}
\end{equation}
Indeed, when $g\in \mathcal{E}_0$ (see Remark \ref{rem:codimension}),  
we recover~\eqref{eq_original_splitting_eq} from Theorem \ref{th:maintheorem} (see \eqref{eq_formula_n_1}). When 
$g\in \{ g \in \mathcal{E}_{n-1} : \chi_{n}^{[-1]}\neq 0\}$, Theorem \ref{th:maintheorem} provides the asymptotic formula \eqref{formula:asymtotic}, whereas formula \eqref{eq_original_splitting_eq} only gives a non-sharp upper bound.

In addition, if $g\in \mathcal{E}_{n-1}$ but $\chi_{n}^{[-1]} =0$ (which is a non-generic codimension one phenomenon in $\mathcal{E}_{n-1}$), formula \eqref{formula:asymtotic} gives a sharper upper bound of the distance, 
\begin{equation}
|\partial_u\Delta(u,\tau,\mu)|\leq M\left|\frac{|\mu|^{n+1}}{\varepsilon^2}\cdot e^{-\frac{\pi}{2\varepsilon}} +\frac{|\mu|}{\varepsilon^{2}}\cdot e^{-\frac{\pi}{2\varepsilon}\cdot 2}+\frac{|\mu|^n}{\log(1/\varepsilon)\cdot \varepsilon^2}\cdot e^{-\frac{\pi}{2\varepsilon}}\right|,
\label{eq_main_result_upper_bound}
\end{equation}
than the one provided by formula~\eqref{eq_original_splitting_eq}.

Note that, unlike in the case $n=1$, it is possible to have a perturbation $g$ with $1\notin G_1$ and $1\in G_2$ (that is, $n(g)=2$), but $\chi_2^{[-1]}=0$. Take, for example, $g(\tau)=\cos(2\tau)+\cos(3\tau)-2\cos(4\tau)$. This function has harmonics $\pm 2$,$\pm 3$ and $\pm 4$, which means that $n(g)=2$ (see \eqref{eq_definition_n}). However, replacing $g^{[\pm 2]}=g^{[\pm 3]}=1/2$, $g^{[\pm 4]}=-1$ in the formula in Theorem \ref{teo_main_second} we see that $\chi_2^{[-1]}=0$. The study of the splitting in this extra degenerate case requires an additional analysis which is out of the scope of this paper. \end{remark}

We end this section with a corollary:
\begin{corollary}
Under the hypotheses of Theorem \ref{th:maintheorem}, if $\chi_n^{[-1]}\neq 0$, the stable and the unstable manifolds $\mathcal{W}^{\mathrm{u},\mathrm{s}}$ of the Hamiltonian system~\eqref{eq_model} intersect transversely. As a consequence, the time $2\pi \varepsilon$ 
map is conjugated to the Smale's horseshoe map of infinite symbols around any transversal homoclinic point.  
\end{corollary}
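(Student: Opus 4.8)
The plan is to derive the transversality and the symbolic dynamics directly from the asymptotic formula \eqref{eq_asymptotic_formula_theorem} of Theorem \ref{th:maintheorem}, exploiting the nondegeneracy hypothesis $\chi_n^{[-1]}\neq 0$. First I would recall that the splitting distance $d(u,\tau;\mu,\varepsilon)$ is, up to the nonvanishing factor $\tfrac{\cosh(u)}{2y_0(u)^{-1}}$ (more precisely $\tfrac{1}{y_0(u)}$), essentially $\partial_u\Delta(u,\tau,\mu)$ evaluated along the common parameter, and that $\mathcal{W}^{\mathrm u}$ and $\mathcal{W}^{\mathrm s}$ intersect exactly at the zeros of $d$. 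So a transversal homoclinic point corresponds to a simple zero of $\tau\mapsto d(u_0,\tau)$ for fixed $u_0$ (equivalently, to a nondegenerate critical point of the difference of generating functions $\Delta$). Concretely, I would fix $u=0$ (the section $x=\pi$) and study $\tau\mapsto\partial_u\Delta(0,\tau,\mu)$.

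The core computation is then elementary: by Theorem \ref{th:maintheorem}, writing $\chi^{[-1]}(\mu)=\chi_n^{[-1]}\mu^n + O(\mu^{n+1})$ and denoting $\chi_n^{[-1]}=|\chi_n^{[-1]}|e^{i\alpha}$, the leading term is
\[
\frac{2e^{-\frac{\pi}{2\varepsilon}}}{\varepsilon^2}\,\mu^n\,|\chi_n^{[-1]}|\,\sin(\tau+\alpha),
\]
which has exactly two simple zeros per period, at $\tau+\alpha\in\{0,\pi\}$, with nonzero $\tau$-derivative there of size comparable to $\tfrac{2e^{-\frac{\pi}{2\varepsilon}}}{\varepsilon^2}|\mu|^n$. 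The remaining terms in \eqref{eq_asymptotic_formula_theorem} are $O\!\big(\tfrac{e^{-\frac{\pi}{2\varepsilon}}}{\varepsilon^2}(|\mu|^{n+1}+|\mu|e^{-\frac{\pi}{2\varepsilon}}+\tfrac{|\mu|^n}{\log(1/\varepsilon)})\big)$, hence \emph{strictly smaller} than the leading term in the regime $\mu_0,\varepsilon_0$ small (using $|\mu|^n\gg|\mu|e^{-\frac{\pi}{2\varepsilon}}$ as in Remark \ref{rm_asympt_form}, and $\tfrac{1}{\log(1/\varepsilon)}\to0$). By the implicit function theorem (or a Rouché/winding argument for the periodic function), for $\mu$ and $\varepsilon$ small enough the perturbed function $\tau\mapsto\partial_u\Delta(0,\tau,\mu)$ still has exactly two simple zeros, and the same holds after multiplying by the smooth nonvanishing prefactor to get $d$. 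Each such simple zero is a transversal homoclinic point of the stable and unstable manifolds of the hyperbolic periodic orbit $\{(0,0,\tau)\}$; here one should note that transversality of the two two-dimensional manifolds in the three-dimensional extended phase space is equivalent to the graph difference $d$ having a simple zero, since the manifolds are graphs over $x$ and the difference of their $y$-components is $d$.

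For the second assertion I would invoke the standard Smale--Birkhoff homoclinic theorem: given a transversal homoclinic point for the time-$2\pi\varepsilon$ stroboscopic map $P_{\varepsilon}$ of the Hamiltonian flow of \eqref{eq_model} (which is an analytic area-preserving diffeomorphism of the cylinder with $(0,0)$ a hyperbolic fixed point), some iterate $P_\varepsilon^N$ has an invariant Cantor set on which it is topologically conjugate to the Bernoulli shift on finitely many symbols; taking the union over the two homoclinic orbits and over larger and larger iterates one obtains, via the usual construction of a horseshoe with infinitely many symbols near a transversal homoclinic orbit (as in the cited works on exponentially small splitting, or \cite{bib_laz}-type constructions), the stated conjugacy to the shift on infinitely many symbols. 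I would state this as a direct citation rather than reproving it.

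The only genuinely delicate point is the first one: making sure the error terms in \eqref{eq_asymptotic_formula_theorem} are uniformly dominated by the leading term \emph{so that the count of simple zeros is preserved}. This is exactly the content that $\chi_n^{[-1]}\neq 0$ buys us, together with shrinking $\mu_0,\varepsilon_0$; the subtlety is that the error $O(|\mu|e^{-\frac{\pi}{2\varepsilon}})$ is not a priori smaller than $|\mu|^n$ for $\mu$ fixed and $\varepsilon$ merely small, so one must restrict to the regime where $|\mu|^{n-1}\gg e^{-\frac{\pi}{2\varepsilon}}$ (in particular $\mu=\mathcal{O}(\varepsilon^m)$ works), and the statement of the corollary should be read with this understanding — which is precisely the ``$\mu$ and $\varepsilon$ small enough'' hypothesis inherited from Theorem \ref{th:maintheorem} via Remark \ref{rm_asympt_form}. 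Everything else is a routine application of the implicit function theorem and the Smale--Birkhoff theorem.
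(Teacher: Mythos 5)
The paper states this corollary without giving a proof, so there is nothing to match line by line; your argument is precisely the standard one the authors leave implicit: use \eqref{eq_asymptotic_formula_theorem}, the hypothesis $\chi_n^{[-1]}\neq 0$ and the smallness of the error terms to locate simple zeros of the splitting function $d=\tfrac{\cosh u}{2}\,\partial_u\Delta$ (cf.\ \eqref{eq_formula_d}), identify them with transversal homoclinic points of the stroboscopic map, and then invoke the Smale--Birkhoff theorem for the horseshoe. Your reading of the regime of validity is also the intended one: the term $\mathcal{O}(|\mu|e^{-\frac{\pi}{2\varepsilon}})$ is only beaten by $|\mu|^n$ when $|\mu|^{n-1}\gg e^{-\frac{\pi}{2\varepsilon}}$, exactly as in Remark \ref{rm_asympt_form}, so the corollary must be read in that (natural) regime.

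One step in your write-up deserves more care: transversality requires a \emph{simple} zero, i.e.\ a nonvanishing derivative of $d$ at the intersection, and the error estimates in Theorem \ref{th:maintheorem} are stated only as $C^0$ bounds at real $(u,\tau)$. Dominance in the sup norm gives sign changes of $\tau\mapsto\partial_u\Delta(0,\tau,\mu)$, hence existence of homoclinic points, but by itself it does not rule out a degenerate zero, so the implicit function theorem cannot be applied directly as you state. The standard repair, which you should make explicit, is that all the objects involved ($\Delta$, $\mathcal{C}$, the coefficients $\Upsilon^{[k]}$) are analytic and the error bounds in the proof actually hold on complex neighbourhoods (in $\tau\in\mathbb{T}_\sigma$, and in $u$ inside $\mathcal{D}^{\mathrm{out}}_{\varrho,\phi}$), so Cauchy estimates transfer the $C^0$ smallness of the error to its $\tau$-derivative; since the $\tau$-derivative of the leading term at its zeros has size comparable to $\varepsilon^{-2}e^{-\frac{\pi}{2\varepsilon}}|\chi_n^{[-1]}||\mu|^n$, the zeros of the full function are simple and the intersection is transversal. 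With that point added, the rest (nondegenerate zero of the graph difference $\Leftrightarrow$ transversal intersection of the two invariant graphs in the extended phase space, and the citation of Smale--Birkhoff for the infinite-symbol shift near a transversal homoclinic orbit of the area-preserving time-$2\pi\varepsilon$ map) is correct and routine.
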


%

We prove Theorem \ref{teo_main_second} in Section \ref{sec_proof_theorem_main_second} and we use this result in the proof of Theorem \ref{thm_main}, in Section \ref{sec_app_inner_equation}.

\subsection{Examples}

In this section we provide two examples where the condition $g^{[\pm1]}\neq 0$ fails: the first one describes a very simple perturbation $g$ where we can check that $\chi^{[-1]}_1=0$ and $\chi_2^{[-1]}\neq 0$ and therefore Theorem \ref{thm_main} gives an asymptotic expression for the splitting. The second one is motivated by the study of the splitting of separatrices of rational tori in Arnold's original model of diffusion \cite{bib_Arnold}.

\subsubsection{An asymptotic formula of order \texorpdfstring{$\mathcal{O}(\mu^2\cdot e^{-\frac{\pi}{2\varepsilon}}) $}{TEXT}} 
\label{sec_example1}
Let us take the model \eqref{eq_model} with $g(\tau)=20\cos(3\tau)+16\cos(2\tau)$. In this case $g^{[\pm 1]}=0$, $g^{[\pm 3]}=10$ and $g^{[\pm 2]}=8$. By definition~\eqref{def_Gn} of $G_n$ we have that

$$
\begin{dcases}
G_1=\{-3,-2,2,3\}
\\
G_2=\{-6,-5,-4,-1,0,1,4,5,6\}.
\end{dcases}
$$
As $1\notin G_1$ but $1\in G_2$, $n=2$. Furthermore, using Theorem  \ref{teo_main_second}:
$$
\chi^{[-1]}_2=-
\frac{4\pi}{3}\frac{10\cdot 8}{3\cdot(-2)}=\frac{160}{9}\pi.
$$
We can use \eqref{eq_formula_n_2} to obtain an explicit formula for the splitting distance:
$$
\partial_u\Delta(u,\tau,\mu)=\frac{e^{-\frac{\pi}{2\varepsilon}}}{\varepsilon^2}\cdot\left[\frac{160}{9}\pi\sin(u/\varepsilon-\tau)\cdot\mu^2+\mathcal{O}(|\mu|^3)+\mathcal{O}\left(|\mu|\cdot e^{-\frac{\pi}{2\varepsilon}}\right)+\mathcal{O}\left(\frac{|\mu|^2}{\log(1/\varepsilon)}\right)\right].
$$

Note that the result in \cite{bib_exp_small} — formula \eqref{eq_original_splitting_eq}— would fail to provide an asymptotic expression for the splitting of this system, so this example corresponds to Remark \ref{rm_asympt_form}.

\subsubsection{The Arnold example}
\label{sec_arnold_example}

In \cite{bib_Arnold}, Arnold presented the following two-and-a-half-degree-of-freedom Hamiltonian system:
$$
H(\varphi_1,\varphi_2,I_1,I_2,s;\mu,\varepsilon)=\frac{1}{2}I_1^2+\frac{1}{2}I_2^2+\epsilon(\cos(\varphi_1)-1)+\epsilon\mu(\cos(\varphi_1)-1)\cdot (\sin(\varphi_2)+\cos(s)),
$$
where $(\varphi_1,\varphi_2,I_1,I_2,s)\in\mathbb{T}^2\times\mathbb{R}^2\times \T$ and $s$ is the time. 
This model had  an enormous impact on the study of instabilities of quasi-integrable Hamiltonian systems, as it is expected to display arbitrarily large drifts in the action space for arbitrarily small $\epsilon$. 
Arnold proved the existence of such instabilities under the restrictive hypothesis of exponentially smallness of $\mu$ with respect to $\epsilon$, concretely, assuming $0<\mu< e^{-\frac{\pi}{2\epsilon}}$. His approach started by showing that the invariant tori given by
$$
\mathcal{T}_{I_2^0}=\{ (\varphi_1,\varphi_2,I_1,I_2,s), \ \varphi_1=I_1=0; I_2=I_2^ 0, \ (\varphi_2,s) \in \mathbb{T}^2 \}
$$
with irrational action $I_2^0$ (also known as quasi-periodic tori) are connected through heteroclinic orbits. Observe that, for $\mu=0$, the tori only have homoclinic orbits given by $\Gamma_\omega=\{(\varphi_1,\varphi_2,I_1,I_2,s);\quad I_1^2/2+\epsilon(\cos(\varphi_1)-1)=0,\, I_2=\omega,\,(\varphi,s)\in\mathbb{T}^2\}$. His idea was that if one proved that for $\mu>0$ the stable and unstable manifolds of these tori intersect transversely along homoclinc orbits, one would also have heteroclinic orbits between nearby tori which would form a heteroclinic chain of connected tori with increasing actions $I_2$.

To establish the existence of transversal homoclinic orbits one needs an asymptotic formula for the distance between the stable and unstable manifolds. Classical perturbation theory in the parameter $\mu$ gives an exponentially small in $\epsilon$ first order and hence, in order to make his argument rigorous, Arnold needed the aforementioned condition of exponential smallness on $\mu$ with respect to $\epsilon$.

Without this hypothesis on $\mu$, proving the existence of unstable orbits is still an open question, the main difficulty being to establish the existence of transversal homoclinic/heteroclinic connections between quasi-periodic tori due to the exponentially small character of the splitting of separatrices (see \cite{bib_quasiperiodic_tori} or \cite{bib_sauzin}). An alternative way to analyze the instabilities is to study the splitting of invariant manifolds in tori with rational frequency. Let us see the problem we would face in that case. 

We focus on the invariant torus associated to a rational frequency $I_2=p/q$, and we take $(p,q)=1$:
$$
\mathcal{T}_{p/q}=\{(\varphi_1,\varphi_2,I_1,I_2,s):I_1=\varphi_1=0,I_2=p/q,(\varphi_2,s)\in\mathbb{T}^2\}.
$$
To analyze its invariant manifolds, it is convenient to perform the following change of variables and time:
$$
\begin{cases}
I_1=\sqrt{\epsilon}\cdot y\\
I_2=\frac{p}{q}+\sqrt{\epsilon}\cdot J\\
\varphi_1=x\\
\varphi_2=\varphi\\
s=\frac{t}{\sqrt{\epsilon}}
\end{cases},
$$
which shifts the invariant torus to $J=0$. In these variables the Hamiltonian becomes:
$$
\mathcal{H}(x,\varphi,y,J,t)=\frac{1}{2}y^2+\frac{p}{q\sqrt{\epsilon}}J+\frac{1}{2}J^2+(\cos(x)-1)+\mu(\cos(x)-1) \cdot \left(\sin(\varphi)+\cos\left(\frac{t}{\sqrt\epsilon}\right)\right),
$$
with equations of motion (the dot represents derivative with respect to $t$):
$$
\begin{dcases}
\dot{x}=y\\
\dot{y}=-\sin(x)-\mu\cdot\sin(x)\cdot \left(\sin(\varphi)+\cos\left(\frac{t}{\sqrt\epsilon}\right)\right)\\
\dot{\varphi}=\frac{p}{q\sqrt\epsilon}+J\\
\dot{J}=\mu\cdot(\cos(x)-1)\cdot \left(\cos(\varphi)+\cos\left(\frac{t}{\sqrt\epsilon}\right)\right)
\end{dcases}.
$$

The first two variables correspond to a pendulum with a perturbation that is fast and periodic in time but depends on the angle $\varphi$ as well. 
Even though these equations are more complex than the ones treated in this paper, it motivates our study of such degenerate systems. Indeed, since $\dot{J}=\mathcal{O}(\mu)$, assuming $J=0$,  which corresponds to the invariant torus of frequency $p/q$, in $\dot{\varphi}$, we obtain a simplified model which is a "naive first order" in $\mu$ where 
$$
\varphi(t)=\frac{p}{q\sqrt\epsilon}\cdot t
$$
and, if we restrict ourselves to the first two equations, we have:
$$
\begin{dcases}
\dot{x}=y,\\
\dot{y}=-\sin(x)-\mu\cdot\sin(x)\cdot \left(\sin\left(\frac{p}{q}\frac{t}{\sqrt\epsilon}\right)+\cos\left(\frac{t}{\sqrt\epsilon}\right)\right).
\end{dcases}
$$
We will deal with the study of splitting of resonant tori in the Arnold's model in a forthcoming paper. Here we only use this simplified model to explain our methodology.
By renaming the parameters $\varepsilon=q\sqrt\epsilon$, we obtain:
$$
\begin{dcases}
\dot{x}=y,\\
\dot{y}=-\sin(x)-\mu\cdot\sin(x)\cdot \left(\sin\left(p\cdot\frac{t}{\varepsilon}\right)+\cos\left(q\cdot\frac{t}{ \varepsilon}\right)\right).
\end{dcases}
$$
which are the equations of motion of the Hamiltonian:
$$
K(x,y,t)=\frac{1}{2}y^2+(\cos(x)-1)+\mu(\cos(x)-1)\left(\sin\left(p\cdot\frac{t}{\varepsilon}\right)+\cos\left(q\cdot\frac{t}{\varepsilon}\right)\right).
$$
We can generalize the model by adding coefficients $A$ and $B$ in the following manner:
\begin{equation}
K(x,y,t)=\frac{1}{2}y^2+(\cos(x)-1)+\mu(\cos(x)-1)\left(A\cdot\sin\left(p\cdot\frac{t}{\varepsilon}\right)+B\cdot\cos\left(q\cdot\frac{t}{\varepsilon}\right)\right).
\label{eq_derived_from_Arnolds_example}
\end{equation}
This model corresponds to \eqref{eq_model} with the function $g$ in \eqref{eq:g} given by:
$$
g\left(\tau\right)=A\cdot\sin\left(p\cdot\tau\right)+B\cdot\cos\left(q\cdot\tau\right).
$$
Note that, since $p$ and $q$ are coprime and $g$ is $2\pi$-periodic, we have that $g^{[\pm p]}=-i\frac{A}{2}$ and $g^{[\pm q]}=\frac{B}{2}$ and $g^{[k]}=0$ otherwise.
Therefore, if $p\neq 1$ and $q\neq 1$ we have that $g^{[\pm 1] } =0$. 
Using Theorem \ref{thm_main} we can state a result about the splitting of the separatrices of the hamiltonian system of Hamiltonian  \eqref{eq_derived_from_Arnolds_example}.

\begin{proposition}\label{prop:Arnolds_example}
Consider the family of systems \eqref{eq_derived_from_Arnolds_example}. 
Fix $\rho>0$, $p,q\in\mathbb{Z}$, and let $k_1^*,k_2^*\in\mathbb{Z}$ be such that 
$
k_1^*p+k_2^*q=1
$
and $|k_1^*|+|k_2^*|$ is minimal among all the integers that fulfill this condition. Let $n=|k_1^*|+|k_2^*|$. Then, there exist a constant $\Theta=\Theta(A,B,p,q)$, $\mu_0=\mu_0(A,B,p,q)>0$ and $\varepsilon_0=\varepsilon_0(\mu_0)>0$ such that for all $\mu\in(-\mu_0,\mu_0)$ and $\varepsilon\in(0,\varepsilon_0)$ the splitting distance between the unstable and stable manifolds (see \eqref{eq_formula_d} and \eqref{eq_def_Delta_outer}) for system \eqref{eq_derived_from_Arnolds_example} is given by the following formula:
\begin{align}
\partial_u\Delta(u,\tau,\mu)&=\frac{e^{-\frac{\pi}{2\varepsilon}}}{\varepsilon^2}\cdot\left[\Im\left(\Theta \cdot e^{i(\tau -u/\varepsilon)}\right)\cdot \mu^n+\mathcal{O}(\mu^{n+1})+\mathcal{O}\left(|\mu|\cdot  e^{-\frac{\pi}{2\varepsilon}}\right)+\mathcal{O}\left(\frac{|\mu|^n}{\log(1/\varepsilon)}\right)\right]
\label{eq_arnold_example_splitting}
\end{align}
for $u\in (-\rho,\rho)$ and $\tau\in [0,2\pi]$. 

Furthermore, there exists an open and dense set $U_{p,q}\subset\mathbb{R}^2$ such that if $(A,B)\in U_{p,q}$, $\Theta(A,B,p,q)\neq 0$. 

As a consequence, considering the residual set $V:=\bigcap_{(p,q)\in\mathbb{Z}^2}U_{p,q}\subset\mathbb{R}^2$, we have that if $(A,B)\in V$, then $\Theta(A,B,p,q)\neq 0$ for all $p,q$.
\end{proposition}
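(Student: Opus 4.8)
The plan is to derive everything from Theorem~\ref{thm_main} applied to the specific perturbation $g(\tau)=A\sin(p\tau)+B\cos(q\tau)$, together with an explicit computation of the degeneracy index $n(g)$ and a genericity argument for the leading coefficient. First I would identify the Fourier spectrum: since $p,q$ are coprime and $g$ is genuinely $2\pi$-periodic, we have $G_1=\{\pm p,\pm q\}$ (when $A,B\neq 0$), so by the definition~\eqref{eq_definition_n} of $n(g)$ and the Bézout argument in the proof of the first lemma, $n(g)$ equals the minimal value of $|k_1|+|k_2|$ over all integer solutions of $k_1 p+k_2 q=1$, where positive and negative occurrences of $p$ and $q$ are both allowed because $-p,-q\in G_1$. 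This matches the definition of $n$ in the statement via $k_1^*,k_2^*$. Then $\Theta:=\chi_n^{[-1]}(A,B,p,q)$, the coefficient furnished by Theorem~\ref{teo_main_second}, and~\eqref{eq_arnold_example_splitting} is precisely~\eqref{eq_asymptotic_formula_theorem} rewritten with $\Im(\chi^{[-1]}(\mu)e^{i(\tau-u/\varepsilon)})=\Im(\Theta e^{i(\tau-u/\varepsilon)})\mu^n+\mathcal{O}(\mu^{n+1})$, absorbing the higher-order-in-$\mu$ terms; the error terms carry over verbatim. So the splitting formula is immediate once $n$ is correctly identified.

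The substantive part is the genericity of $\Theta\neq 0$. Here I would use part~2 of Theorem~\ref{teo_main_second}: $\chi_n^{[-1]}$ depends \emph{analytically} on the Fourier coefficients $\{g^{[k]}\}$, hence — after substituting $g^{[\pm p]}=-iA/2$, $g^{[\pm q]}=B/2$ — $\Theta(A,B,p,q)$ is a real-analytic (in fact polynomial, by the algorithm of Section~\ref{sec_algorithm} / the iterative structure of the inner equation) function of $(A,B)$ for fixed $p,q$. A nonconstant real-analytic function on $\mathbb{R}^2$ has a zero set that is closed with empty interior, so $U_{p,q}:=\{(A,B):\Theta(A,B,p,q)\neq 0\}$ is open and dense \emph{provided} $\Theta$ is not identically zero. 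Thus the crux is to exhibit, for each coprime pair $(p,q)$, at least one choice of $(A,B)$ with $\Theta\neq 0$. For this I would fall back on the structure of the algorithm computing $\chi_n^{[-1]}$: it is built by composing $n$ contributions, one for each factor in a Bézout decomposition $k_1^* p+k_2^* q=1$, and up to the explicit nonzero combinatorial/arithmetic prefactors (generalizing the $n=2$ formula $\chi_2^{[-1]}=-\frac{4\pi}{3}\sum_{k>1}g^{[k]}g^{[1-k]}/(k(1-k))$) the leading term is $c\cdot A^{|k_1^*|}B^{|k_2^*|}$ (times $i$'s from the $\sin$) with $c\neq 0$ a universal constant, plus possibly other monomials of the same total degree coming from other Bézout representations. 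Choosing $(A,B)$ so that these monomials do not cancel — generically they do not — gives $\Theta\neq 0$.

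The main obstacle I anticipate is exactly this last point: ruling out that $\Theta(A,B,p,q)\equiv 0$. Different Bézout representations $k_1 p+k_2 q=1$ of minimal weight $n$ could in principle produce competing monomials $A^{a_1}B^{b_1}$, $A^{a_2}B^{b_2}$ with $a_j+b_j=n$, and one must check these do not conspire to vanish for all $(A,B)$; this requires either a careful bookkeeping of the recursion in the inner equation (tracking which products $g^{[m_1]}\cdots g^{[m_n]}$ with $\sum m_j=1$, $m_j\in\{\pm p,\pm q\}$, actually contribute to $\Delta_{\mathrm{in}}^{[1]}$ with nonzero coefficient via~\eqref{eq_chi_n_formula}) or a clever specialization — e.g. sending one of $A,B$ to a value making the identity degenerate in a controlled way, or a parity/homogeneity argument in $A$ and $B$ separately. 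A clean way out is: the coefficient of the monomial $A^{|k_1^*|}B^{|k_2^*|}$ (for the unique minimal representation, if it is unique) is a product of explicit nonzero factors from the algorithm, hence nonzero; when the minimal representation is not unique one inspects total degree in $A$ alone, which isolates a single representation and hence a single nonzero monomial. Finally, the residual set $V=\bigcap_{(p,q)}U_{p,q}$ is a countable intersection of open dense sets, hence residual (dense $G_\delta$) by Baire, which gives the last assertion. I would organize the write-up as: (i) compute $n(g)$ and reduce~\eqref{eq_arnold_example_splitting} to Theorem~\ref{thm_main}; (ii) establish analyticity of $\Theta$ in $(A,B)$ and the open-dense dichotomy; (iii) prove $\Theta\not\equiv 0$ via the algorithm's leading monomial; (iv) conclude with Baire for $V$.
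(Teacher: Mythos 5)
Your proposal is correct and takes essentially the same route as the paper, whose entire proof is the one-line application of Theorem \ref{thm_main} to \eqref{eq_derived_from_Arnolds_example} together with the remark that the analytic dependence of $\chi_n^{[-1]}$ on $(A,B)$ makes it generically nonzero (the Baire step for $V$ being immediate). The issue you single out as the main obstacle — ruling out $\Theta(\cdot,\cdot,p,q)\equiv 0$ so that $U_{p,q}$ is actually dense — is not addressed in the paper's proof either, so your leading-monomial sketch, while not fully worked out for general $n$, only adds detail beyond what the authors provide.
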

\begin{proof}

The proof of this proposition is straightforward by applying Theorem \ref{thm_main} to \eqref{eq_derived_from_Arnolds_example}. Indeed, fixing $p,q$ it is a consequence of the analytic dependence of the coefficient $\chi_n^{[-1]}$ with respect to $A,B$ that implies that, generically, $\chi_n^{[-1]}$ will be different from zero.  
\end{proof}

\subsection{An algorithm for computing \texorpdfstring{$\chi_n^{[-1]}$}{TEXT}}
\label{sec_algorithm}

As mentioned before, in order to have an asymptotic formula for the splitting we need $\chi_n^{[-1]}\neq 0$. Even though we cannot compute this constant analytically —except for $n=1$ and $n=2$, see \ref{teo_main_second}—, in this section we provide a numerical algorithm to check that $\chi_j^{[-1]}\neq0$ for a given $j>0$. We remark that this is an outline of a systematic algorithm rather than a rigorous numerical method, which is out of the scope of this work. In Section \ref{sec_description_numerical_method} we describe a computational algorithm to calculate solutions of a model PDE. After that, in Section \ref{sec_application_method_chi_1} we will explain the method by treating the cases $n=2$ and $n=3$, but we could extend it to any $n$ by deriving the corresponding equation. We also present a concrete computation of $\chi_2^{[-1]}$ and $\chi_3^{[-1]}$. In the case of $\chi_2^{[-1]}$ we compare the numerical result with the theoretical result of the example presented in Section \ref{sec_example1}.

\subsubsection{A model PDE}
\label{sec_description_numerical_method}
Let $h(z,\tau)$ be an analytic function $2\pi$-periodic in $\tau$ having a finite number of Fourier coefficients and an asymptotic formal expansion as $z\to+\infty$.
$$
h(z,\tau)=\sum_{\ell\geq 1}\frac{1}{z^\ell}\cdot h_\ell(\tau)=\sum_{k=-M}^Mh^{[k]}(z)\cdot e^{ik\tau}.
$$
We are interested in solutions of
$$
\partial_\tau f(z,\tau)+\partial_zf(z,\tau)=h(z,\tau)
$$
that are $2\pi$-periodic in $\tau$ with boundary conditions $\lim_{\Re(z)\to\pm\infty}f(z,\tau)=0$. We use the following method.
\paragraph{Step 1} Write the ODE for the Fourier coefficients:
\begin{equation}
\label{eq_fourier_model}
ikf^{[k]}(z)+\frac{d}{dz}f^{[k]}(z)=h^{[k]}(z).
\end{equation}
\paragraph{Step 2}Truncate the formal expansion of $h^{[k]}(z)$ up to some order $N$,
$$
h^{[k]}(z)=\sum_{\ell=1}^N\frac{1}{z^\ell}\cdot h_\ell^{[k]},
$$
and solve \eqref{eq_fourier_model} by equating terms of the same order. This provides an approximated solution of the Fourier coefficients of $f$:
$$
f^{[k]}(z)\approx \sum_{\ell=1}^N \frac{1}{z^{\ell}}\cdot f_{\ell}^{[k]}:=\tilde{f}^{[k]}(z)
$$
when $\Re(z)\gg 1$ if the boundary condition $\lim_{\Re(z)\to+\infty}f(z,\tau)=0$ is considered or when $\Re(z)\ll -1$ otherwise.
\paragraph{Step 3} For any $z_f$, select $z_0$ with $|\Re(z_0)|\gg 1$ and $\Im(z_0)=\Im(z_f)$. Set
$$
\varphi(t)=f^{[k]}(z_0+t),
$$
which is a solution of
$$
\varphi'(t)=-ik\varphi(t)+h^{[k]}(z_0+t),
$$
and numerically integrate from $t_0=0$ to $t_f=\Re(z_f)-\Re(z_0)$ with initial condition $\varphi(0)=\tilde{f}^{[k]}(z_0)$. Then, $f^{[k]}(z_f)\approx \varphi(t_f)$.

\subsubsection{The algorithm to compute \texorpdfstring{$\chi_n^{[-1]}$}{TEXT}}
\label{sec_application_method_chi_1}

We first expand the solutions of the inner equation \eqref{eq_inner_equation}

$$
\partial_\tau\psi(z,\tau,\mu)+\partial_z\psi(z,\tau,\mu)=\frac{1}{8}z^2(\partial_z\psi(z,\tau,\mu))^2-2\mu \frac{g(\tau)}{z^2}
$$
in power series in $\mu$ and in Fourier series (see \ref{section_conditions_for_delta_0} for more details):
$$\psi^{\pm}(z,\tau,\mu)=\sum_{j\geq 1}\psi_j^\pm(z,\tau)\cdot\mu^j
,\qquad \psi_j^{\pm}(z,\tau)=\sum_{k\in\mathbb{Z}}\psi_j^{\pm,[k]}(z)\cdot e^{iz\tau}.$$
Using formula \eqref{eq_chi_n_formula}, since $\frac{1}{n!}\partial_\mu^n\Delta_{\mathrm{in}}^{[1]}(z)=\psi_n^{-,[1]}(z)-\psi_n^{+,[1]}(z)$, we only need to numerically compute $\psi_{n}^{\pm,[1]}(-i\rho)$ for $\rho>0$ large enough and approximate the limit:
\begin{equation}
\label{eq_approx_chi_1}
\chi_n^{[-1]}=\lim_{z\to-i\infty}e^{iz}\cdot\Delta_{\mathrm{in},n}^{[1]}(z)\approx e^{\rho}\cdot(\psi_n^{-,[1]}(-i\rho)-\psi_n^{+,[1]}(-i\rho)).
\end{equation}

The steps are the following:
\paragraph{Step 1} Let $\sigma_0>0$ be such that $g$ is analytic in the complex strip $\mathbb{T}_{\sigma_0}$. Fix an accuracy $\delta>0$ and take $M$ big enough such that 
$$
||g||_{\sigma_0}:=\max_{\tau\in\mathbb{T}_{\sigma_0}}|g(\tau)|\leq \frac{\delta}{2} \cdot e^{M\sigma_0/2}\cdot(1-e^{\sigma_0/2}).
$$
Define $\tilde{g}(\tau)=\sum_{|k|< M}g^{[k]}e^{ik\tau}$. Since $|g^{[k]}|\leq ||g||_{\sigma_0}e^{-|k|\sigma_0}$, we have that $\max_{\tau\in\mathbb{T}_{\sigma_0/2}}|g(\tau)-\tilde{g}(\tau)|\leq \delta$. Consider the approximated inner equation

$$
\partial_\tau\psi(z,\tau,\mu)+\partial_z\psi(z,\tau,\mu)=\frac{1}{8}z^2(\partial_z\psi(z,\tau,\mu))^2-2\mu\frac{\tilde{g}(\tau)}{z^2}.
$$
\paragraph{Step 2} The functions $\partial_z\psi_1^{\pm}$ satisfy $\lim_{|\Re(z)|\to\infty}\partial_z\psi_1^{\pm}=0$ and the equation 
$$
\partial_\tau(\partial_z\psi_1^{\pm})+\partial_z(\partial_z\psi_1^{\pm})=4\mu\frac{\tilde{g}(\tau)}{z^3}.
$$
Use the method in \ref{sec_description_numerical_method} to compute $\partial_z\psi_1^{\pm,[k]}(z)$ for $\Im(z)\leq -\rho$ with $|k|\leq M$ and $k\in G_1$.
\paragraph{Step 3}The functions $\partial_z\psi_j^{\pm}$ satisfy $\lim_{|\Re(z)|\to\infty}\partial_z\psi_j^{\pm}=0$ and 
$$
\partial_\tau(\partial_z\psi_j^{\pm})+\partial_z(\partial_z\psi_j^{\pm})=h_j^{\pm},
$$
with $h_j^{\pm}$ depending on $\partial_z\psi_1^\pm$, $\dots$, $\partial_z\psi_{j-1}^{\pm}$ (already computed). Solve it using the method in the previous section only taking into account the Fourier coefficients indexed by $k\in G_j$, $|k|\leq M$.

\paragraph{Step 4}For $\psi_n^\pm$ we only need to take into account its first Fourier coefficients, $\psi_n^{\pm,[1]}$, which satisfy

$$
i\psi_n^{\pm,[1]}(z)+\partial_z\psi_n^{\pm,[1]}(z)=\frac{1}{8}z^2\sum_{l=1}^{n-1}\sum_{k=-M}^M\partial_z\psi_l^{\pm,[1-k]}(z)\cdot\partial_z\psi_{n-l}^{\pm,[k]}(z)
$$
and, again, apply the method in Section \ref{sec_description_numerical_method}, for $z_f=-i\rho$.
\paragraph{Step 5} Finally compute $\chi_n^{[-1]}$ using the approximation in \eqref{eq_approx_chi_1}.

\subsubsection{Explicit computation for \texorpdfstring{$n=2$}{TEXT} and \texorpdfstring{$n=3$}{TEXT}}
\label{sec_explicit_computation_n_3_2}
We write here explicitly the equations for $\psi_{2}^{\pm,[k]}(z)$ and $\psi_{3}^{\pm,[k]}(z)$:

\begin{equation}
\label{eq_inner_order_2_fourier}
ik\psi_2^{\pm,[k]}(z)+\partial_z\psi_2^{\pm,[k]}(z)=\frac{1}{8}z^2\sum_{m\in\mathbb{Z}}\partial_z\psi_1^{\pm,[m]}(z)\cdot\partial_z\psi_1^{\pm,[k-m]}(z)
\end{equation}
for $j=2$ and

\begin{equation}
\label{eq_inner_order_3_fourier}
ik\psi_{3}^{\pm,[k]}(z)+\partial_z\psi_{3}^{\pm,[k]}(z)=\frac{1}{4}z^2\sum_{m\in\mathbb{Z}}\partial_z\psi_{1}^{\pm,[m]}(z)\cdot\partial_z\psi_{2}^{\pm,[k-m]}(z)
\end{equation}
for $j=3$. Since on the right-hand sides of \eqref{eq_inner_order_2_fourier} and \eqref{eq_inner_order_3_fourier} the functions $\partial_z\psi_1^{[\pm]}$ and $\partial_z\psi_2^{[\pm]}$ appear, we also need their respective equations:

\begin{equation}
\label{eq_inner_order_1_derivative_fourier}
ik\partial_z\psi_1^{\pm,[k]}(z)+\partial_z(\partial_z\psi_1^{\pm,[k]})(z)=\frac{4}{z^3}g^{[k]}
\end{equation}
for $\partial_z\psi_1^{\pm,[k]}(z)$ and

\begin{equation}
    \label{eq_inner_order_2_derivative}
    ik\partial_z\psi_2^{\pm,[k]}(z)+\partial_z(\partial_z\psi_2^{\pm,[k]})(z)=\frac{1}{4}z\sum_{m\in\mathbb{Z}}\partial_z\psi_1^{\pm,[m]}(z)\partial_z\psi_1^{\pm,[k-m]}(z)+\frac{1}{4}z^2\sum_{m\in\mathbb{Z}}\partial_z\psi_1^{\pm,[m]}(z)\partial_z^2\psi_1^{\pm,[k-m]}(z)
\end{equation}
for $\partial_{z}\psi_2^{\pm,[k]}$. Finally, as $\partial_z^2\psi_1^{\pm,[k]}$ appears on the right-hand side of \eqref{eq_inner_order_2_derivative}, we need the corresponding equation, namely
\begin{equation}
\label{eq_inner_order_1_second_derivative_fourier}
    ik\partial_z^2\psi_1^{\pm,[k]}(z)+\partial_z(\partial_z^2\psi_1^{\pm,[k]}(z))=-\frac{12}{z^4}g^{[k]}.
\end{equation}

As an example of computation of $\chi_2^{[-1]}$ we take the perturbation $g(\tau)=20\cos(3\tau)+16\cos(2\tau)$, already discussed in Section \ref{sec_example1}. On Table \ref{tbl_numerics} and the left panel of Figure \ref{fig_numerics} we see the values we obtained using different values for $\rho$ with $\Re(z_0)=40$ and $N=20$. We see that from $\rho=4$ to $\rho=10$ the numerical value coincides precisely with the theoretical value given in the example in Section \ref{sec_example1} (represented as a dashed yellow line).

As a concrete example for $n=3$ we consider the perturbation $g(\tau)=20\cos(5\tau)+16\cos(3\tau)$. In this case $g^{[\pm 1]}=0$, $g^{[\pm 5]}=10$ and $g^{[\pm 10]}=8$. By definition~\eqref{def_Gn}  of $G_n$ we have that

$$
\begin{dcases}
G_1=\{-5,-3,3,5\}
\\
G_2=\{-10,-8,-6,-2,0,2,6,8,10\}
\\
G_3=\{-15,-13,-11,-9,-7,-5,-3,-1,1,3,5,7,9,11,13,15\}
.
\end{dcases}
$$
Since $1\notin G_1$, $1\notin G_2$ and $1\in G_3$, $n(g)=3$. We see on Table \ref{tbl_numerics} and on the right of Figure \ref{fig_numerics} the numerical approximation of $\chi_3^{[-1]}$ for several values of $\rho$. As in the case $n=2$, the value is stable between $\rho=4$ and $\rho=10$. Notice that for higher values of $\rho$ the values of $\chi_2^{[-1]}$ and $\chi_3^{[-1]}$ start deviating quickly from the stable value. This occurs as a result of large cancellations involved in the computations. As the range $\rho\in(4,10)$ yields an accurate result for $\chi_2^{[-1]}$, we take the stable numerical value in this interval as a valid approximation and we can conclude that


$$
\partial_u\Delta(u,\tau,\mu)=\frac{2e^{-\frac{\pi}{2\varepsilon}}}{\varepsilon^2}\cdot\left[
\Im\left( \chi^{[-1]}_3 \cdot e^{i(\tau-u/\varepsilon)}\right)\cdot\mu^3 +
\mathcal{O}(\mu^{4} )+
\mathcal{O}\left(|\mu|\cdot e^{-\frac{\pi}{2\varepsilon}}\right)+\mathcal{O}\left(\frac{|\mu|^3}{\log(1/\varepsilon)}\right)\right]
$$
is a valid asymptotic formula for the splitting with $\chi_3^{[1]}\approx 7$.

\begin{table}[H]
{
\small
\centering
\begin{tabular}{|*{11}{c|}}
\hline
$\rho$&4&5&6&7&8&9&10&11&12&14
\\
\hline
$\chi_2^{[-1]}$&55.7217&55.8103&55.8006&55.8053&55.7987&56.0248&56.7904&59.1055&64.3010&112.6084
\\
\hline
$\chi_3^{[-1]}$&6.7623&7.0372&7.0692&7.0901&7.1007&7.1193&7.2627&7.2962&8.0834&9.5561
\\
\hline
\end{tabular}
}
\caption{Numerical approximation for $\chi_2^{[-1]}$ and $\chi_3^{[-1]}$ for different values of $\rho$ with $\Re(z_0)=40$ and $N=20$.}
\label{tbl_numerics}
\end{table}

    \begin{figure}[H]
    \begin{minipage}{0.45\textwidth}
\includegraphics[width=\textwidth]{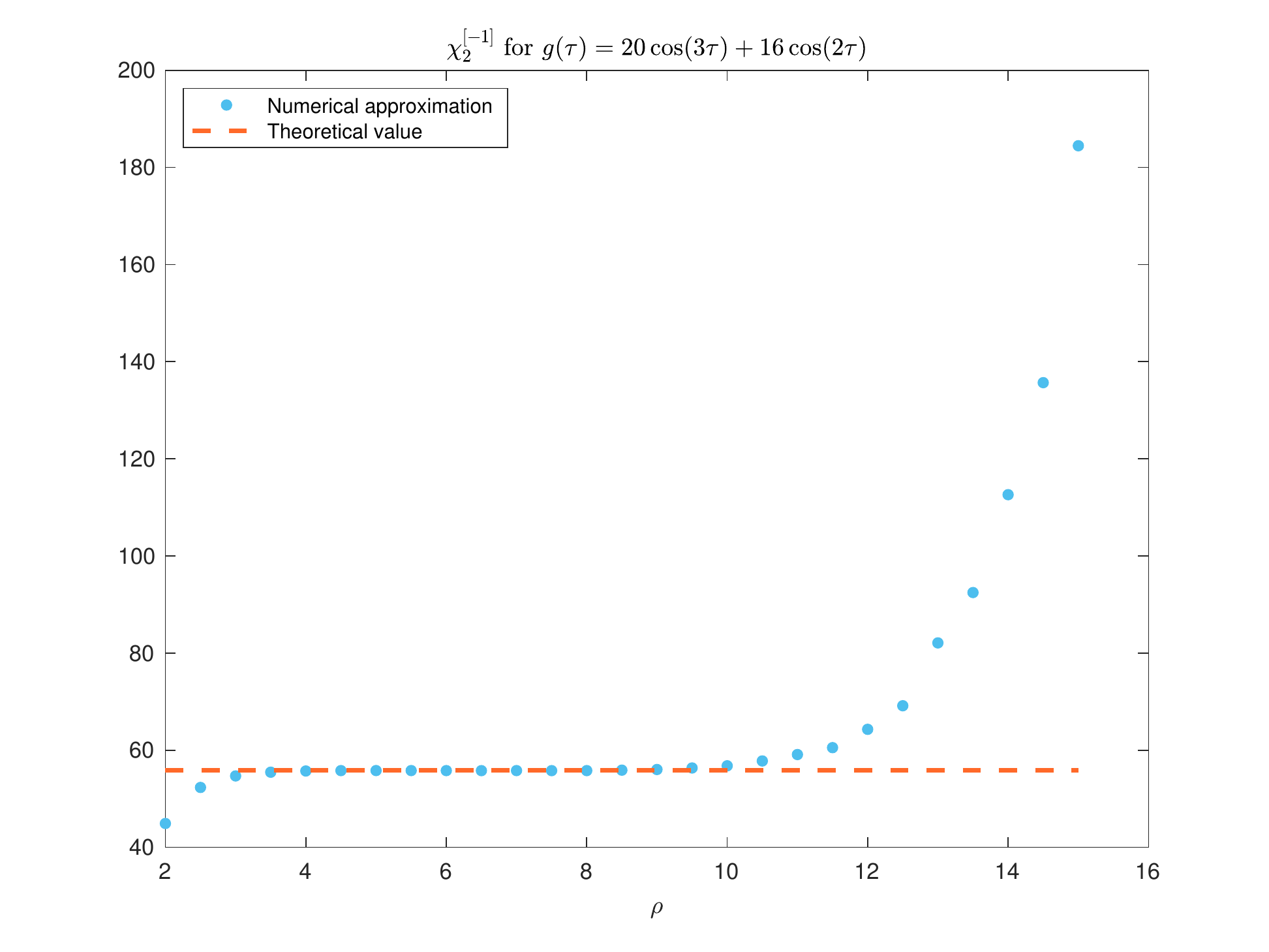}
    \end{minipage}
    \begin{minipage}{0.45\textwidth}
\includegraphics[width=\textwidth]{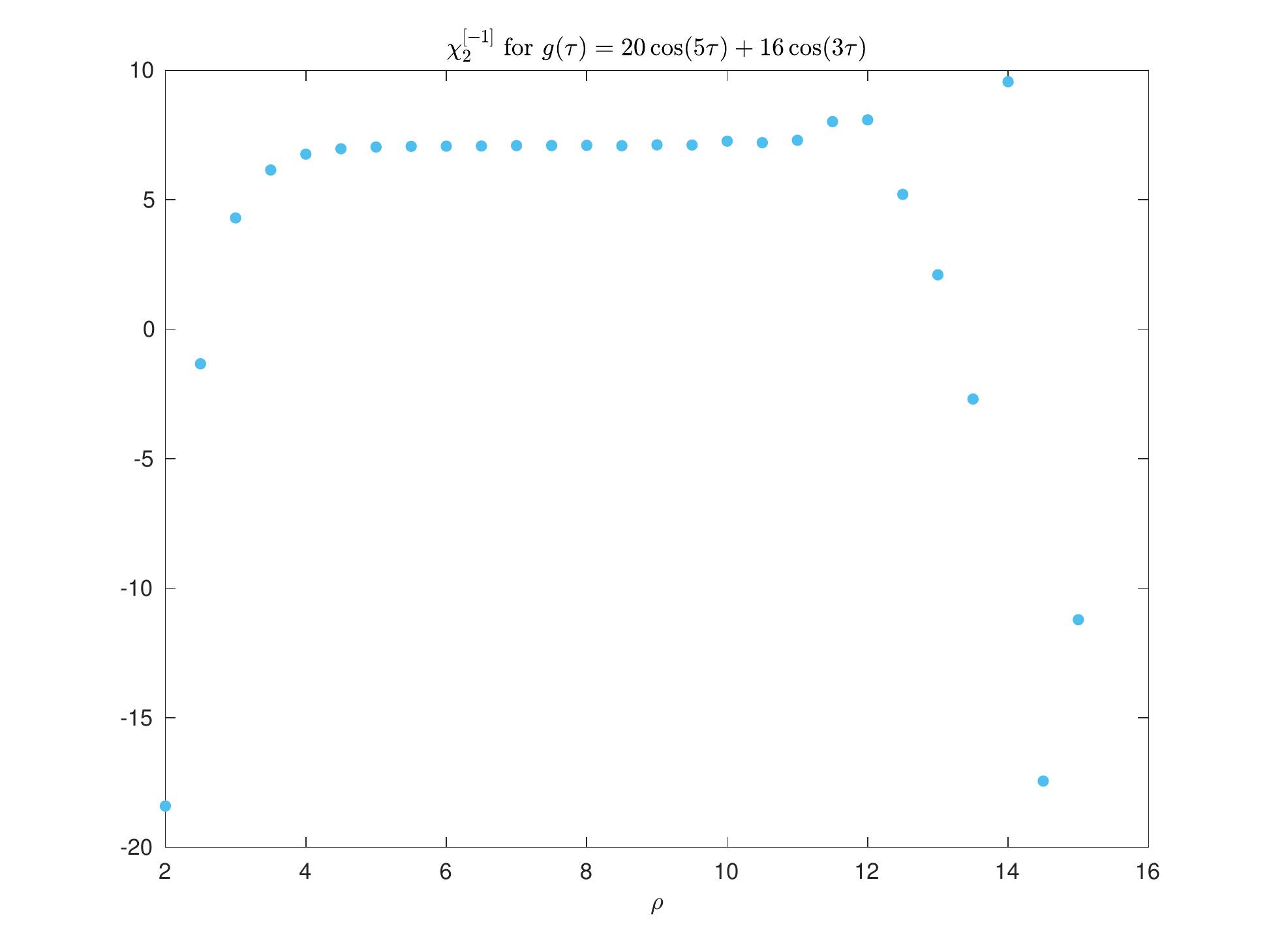}
    \end{minipage}
    \caption{Numerical approximation for $\chi_2^{[-1]}$ and $\chi_3^{[-1]}$ for different values of $\rho$ with $\Re(z_0)=40$ and $N=20$.}
    \label{fig_numerics}
    \end{figure}

\section{Proof of \ref{teo_main_second}}
\label{sec_proof_theorem_main_second}

Take $g$ periodic and let $n=n(g)$ defined in~\eqref{eq_definition_n}, namely $1\in G_n$ and $1\notin G_j$ for $j=1, \cdots, n-1$. Another way to express this condition is that $g\in \mathcal{E}_{n-1}$, see Remark \ref{rem:codimension}. We also fix the constants $\mu_0, \theta, \varrho \geq \varrho_0$ and $\sigma$ such that Theorem \ref{teo_inner_existence} holds true. 
We will omit them throughout this section. 

We begin by setting the usual convention that we denote by $M$ a constant independent of $\mu$ and $\varepsilon$ which could change its value during the section.

Now we introduce some notation. Consider $\Delta_{\mathrm{in}}(z,\tau,\mu)$, $\chi^{[k]}(\mu)$ and $\mathfrak{g}(z,\tau,\mu)$ (defined in Theorem \ref{teo_inner_existence}) as analytic functions of $\mu$ in a neighborhood of $\mu=0$. Their expansion around $\mu=0$ are
$$\Delta_{\mathrm{in}}(z,\tau,\mu)=\sum_{j\geq 1}\Delta_{\mathrm{in},j}(z,\tau)\cdot\mu^j,\qquad  
\mathfrak{g}(z,\tau,\mu)=\sum_{j\geq 1}\mathfrak{g}_j(z,\tau)\cdot \mu^j, \qquad  \chi^{[k]}(\mu)=\sum_{j\geq 1}\chi^{[k]}_j\cdot\mu^j.
$$

Since $\Delta_{\mathrm{in},j}(z,\tau)$ and $\mathfrak{g}_j(z,\tau)$ are $2\pi$-periodic functions in $\tau$ they admit a Fourier expansion that we write as 
$$
\Delta_{\mathrm{in},j}(z,\tau)=\sum_{k\in\mathbb{Z}}\Delta_{\mathrm{in},j}^{[k]}(z)\cdot e^{ik\tau}, \qquad  \mathfrak{g}_j(z,\tau)=\sum_{k\in\mathbb{Z}}\mathfrak{g}_j^{[k]}(z)\cdot e^{ik\tau}.
$$

\subsection{Conditions for \texorpdfstring{$\Delta_{\mathrm{in},j}^{[1]}=0$}{TEXT}}
\label{section_conditions_for_delta_0}
 
Our first goal is to prove that $\Delta_{\mathrm{in},j}^{[1]}=0$ if $j<n=n(g)$. 
To this end we expand in power series of $\mu$ the solutions of the inner equation provided by Theorem \ref{teo_inner_existence}:
$
\psi^{\pm}(z,\tau,\mu)=\sum_{j\geq 1}\psi_j^\pm(z,\tau)\cdot\mu^j.
$
We plug this expansion into the inner equation \eqref{eq_inner_equation} and we obtain the equations for each coefficient $\psi_j^{\pm}$ by equating terms of $\mathcal{O}(\mu^j)$. 
These are, for $j=1$,
\begin{equation}
\partial_\tau\psi^\pm_1(z,\tau)+\partial_z\psi_1^\pm(z,\tau)=-\frac{2}{z^2}g(\tau)
\label{eq_inner_mu_1}
\end{equation}
and, for $j>1$,
\begin{equation}
\partial_{\tau}\psi_j^\pm(z,\tau)+\partial_z\psi_j^\pm(z,\tau)=\frac{1}{8}z^2\sum_{l=1}^{j-1}\partial_z\psi_l^\pm(z,\tau)\cdot\partial_z\psi_{j-l}^\pm(z,\tau).
\label{eq_inner_mu_2}
\end{equation}
As was pointed out in Theorem \ref{teo_inner_existence}, the boundary conditions for $\psi^\pm$ (and for $\psi_j^\pm$) are:
\begin{equation}
\lim_{\Re(z)\to\pm\infty}\partial_z\psi_j^{\pm}(z,\tau)=0.
\label{eq_bound_cond_inner_mu}
\end{equation}

The next lemma links the harmonics of the perturbation, $g^{[k]}$, to the harmonics of the solutions, $\psi_j^{\pm,[k]}$.

\begin{lemma}
 If $k\notin G_j$ (see \eqref{def_Gn}), then $\ \psi_{j}^{\pm,[k]}(z)=0$ and therefore $\Delta_{\mathrm{in},j}^{[k]}(z)\equiv 0$.
 
 As a consequence, since $1\notin G_j$ for $j=1,\cdots, n-1$, we have that $\Delta_{\mathrm{in}}^{[1]}(z,\mu) = \mathcal{O}(\mu^n)$. 
\label{lemma_Fourier_coeff_inner}
\end{lemma}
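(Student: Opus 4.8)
The plan is to argue by induction on $j$ at the level of Fourier coefficients, using the equations \eqref{eq_inner_mu_1}--\eqref{eq_inner_mu_2} satisfied by the $\psi_j^{\pm}$ together with the single combinatorial fact that, straight from the definition \eqref{def_Gn}, one has $G_l+G_{j-l}\subseteq G_j$ for $1\le l\le j-1$ (a sum of $l$ elements of $G_1$ plus a sum of $j-l$ elements of $G_1$ is a sum of $j$ elements of $G_1$). The mechanism is that whenever $k\notin G_j$ the $k$-th Fourier coefficient of the relevant equation becomes \emph{homogeneous}, and the decay of $\psi_j^\pm$ in the inner domains rules out nontrivial homogeneous solutions.

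Before the induction I would record an auxiliary estimate. Since $\psi^\pm$ is analytic in $\mu\in B_{\mu_0}$ with $|\partial_z\psi^\pm|<M|\mu|/|z|^3$ there (Theorem \ref{teo_inner_existence}), Cauchy estimates in $\mu$ give $|\partial_z\psi_j^\pm(z,\tau)|\le M_j/|z|^3$ on $\mathcal{D}^{\mathrm{in},\pm}_{\domvar,\theta}\times\T_\sigma$, hence $|\partial_z\psi_j^{\pm,[k]}(z)|\le M_j/|z|^3$; integrating this along the ray $z=it$, which lies in $\mathcal{D}^{\mathrm{in},\pm}_{\domvar,\theta}$ for $|t|>\domvar$, shows that each $\psi_j^{\pm,[k]}$ stays bounded on both imaginary half-axes. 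I would also normalize $\psi^\pm\to 0$ as $|z|\to\infty$, which is harmless since \eqref{eq_inner_equation} involves only derivatives of $\psi$; this fixes the means $\psi_j^{\pm,[0]}$.

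For $j=1$, the $k$-th Fourier coefficient of \eqref{eq_inner_mu_1} is $ik\,\psi_1^{\pm,[k]}(z)+\partial_z\psi_1^{\pm,[k]}(z)=-2z^{-2}g^{[k]}$; if $k\notin G_1$ then $g^{[k]}=0$ (for $k=0$ this is the zero-mean hypothesis on $g$), so the equation is homogeneous, its solutions are multiples of $e^{-ikz}$, which for $k\neq 0$ grows along one of the imaginary half-axes contained in $\mathcal{D}^{\mathrm{in},\pm}_{\domvar,\theta}$, forcing $\psi_1^{\pm,[k]}\equiv 0$; for $k=0$ the solution is constant and vanishes by the normalization. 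For $j\ge 2$, assuming the claim for all $l<j$, the $k$-th Fourier coefficient of \eqref{eq_inner_mu_2} reads
\[
ik\,\psi_j^{\pm,[k]}(z)+\partial_z\psi_j^{\pm,[k]}(z)=\tfrac{1}{8} z^2\sum_{l=1}^{j-1}\sum_{m\in\mathbb{Z}}\partial_z\psi_l^{\pm,[m]}(z)\,\partial_z\psi_{j-l}^{\pm,[k-m]}(z),
\]
and by the induction hypothesis a summand can be nonzero only if $m\in G_l$ and $k-m\in G_{j-l}$, in which case $k=m+(k-m)\in G_l+G_{j-l}\subseteq G_j$. Hence for $k\notin G_j$ the right-hand side vanishes, the equation is again homogeneous, and the same argument gives $\psi_j^{\pm,[k]}\equiv 0$, so $\Delta_{\mathrm{in},j}^{[k]}=\psi_j^{-,[k]}-\psi_j^{+,[k]}$ vanishes on $\mathcal{D}^{\mathrm{in}}_{\domvar,\theta}$. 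Finally, $1\notin G_j$ for $j=1,\dots,n-1$ by the definition \eqref{eq_definition_n} of $n=n(g)$, so $\Delta_{\mathrm{in},j}^{[1]}\equiv 0$ for $j<n$ and therefore $\Delta_{\mathrm{in}}^{[1]}(z,\mu)=\sum_{j\ge n}\Delta_{\mathrm{in},j}^{[1]}(z)\,\mu^j=\mathcal{O}(\mu^n)$.

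The only genuinely delicate point is the exclusion of the homogeneous solutions $Ce^{-ikz}$: one has to check that for every $k\neq 0$ (and for either sign of $k$) the inner domain $\mathcal{D}^{\mathrm{in},\pm}_{\domvar,\theta}$ contains a ray along which $e^{-ikz}$ grows while the decay estimate keeps $\psi_j^{\pm,[k]}$ bounded there, plus the minor bookkeeping that disposes of $k=0$. Everything else — the Cauchy estimate in $\mu$ and the inclusion $G_l+G_{j-l}\subseteq G_j$ — is routine.
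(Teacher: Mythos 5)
Your proof is correct and follows essentially the same route as the paper: induction on $j$, Fourier expansion of \eqref{eq_inner_mu_1}--\eqref{eq_inner_mu_2}, the inclusion $G_l+G_{j-l}\subseteq G_j$, and elimination of the homogeneous solutions $Ce^{-ikz}$ by decay of $\psi_j^{\pm,[k]}$. The only cosmetic differences are that the paper rules out $C\neq 0$ via the boundary condition $\lim_{\Re(z)\to\pm\infty}\partial_z\psi_j^{\pm}=0$ rather than boundedness along the imaginary half-axes, and that your explicit Cauchy-in-$\mu$ estimate and the $k=0$ normalization make precise points the paper leaves implicit.
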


\begin{proof}

We prove this result by induction. 
Consider $j=1$. 
Expanding equation \eqref{eq_inner_mu_1} in Fourier series we have that
\begin{equation}
\label{eq_inner_order_1_Fourier}
ik\psi_{1}^{\pm,[k]}(z)+\partial_z\psi_{1}^{\pm,[k]}(z)=-\frac{2}{z^2}g^{[k]}.
\end{equation}
If $k\notin G_1$ (see \eqref{def_Gn}), then $g^{[k]}=0$. 
In this case the only solution of the above-mentioned equation is $\psi_1^{\pm,[k]}=C^\pm\cdot e^{-ikz}$. 
However, in order to satisfy the boundary conditions \eqref{eq_bound_cond_inner_mu}, $C^\pm=0$.

Now we take $j>2$ and assume that for $\nu<j$ the result holds true. Expanding \eqref{eq_inner_mu_2} in Fourier we obtain:
\begin{equation}
\label{eq_inner_order_2_Fourier}
ik\psi_{j}^{\pm,[k]}(z)+\partial_z\psi_{j}^{\pm,[k]}(z)=\frac{1}{8}z^2\sum_{l=1}^{j-1}\sum_{m\in\mathbb{Z}}\partial_z\psi_{l}^{\pm,[m]}(z)\cdot\partial_z\psi_{j-l}^{\pm,[k-m]}(z).
\vspace{0.1cm}
\end{equation}
By induction hypothesis, the non-zero terms on the right-hand side are those where $m\in G_l$ and $k-m\in G_{j-l}$. 
Therefore, $k=m+(k-m)=m_1+\dots+m_l+m'_1+\dots+m'_{j-l}$ with $m_i,m'_i\in G_1$. This means $k\in G_j$. 
Hence, if $k\notin G_j$ the right-hand side of the equation has to be 0 and, given the boundary condition, so is $\partial_z\psi_{j}^{\pm,[k]}$.

\vspace{0.3cm}

\end{proof}

\subsection{A formula for \texorpdfstring{$\chi^{[-1]}(\mu)$}{TEXT}}
We can now state the following result, which relates the Taylor coefficients $\chi^{[-1]}_j$ of $\chi^{[-1]}(\mu)$ with the corresponding Taylor coefficients of $\Delta_{\mathrm{in},j}^{[1]}(z,\mu)$.

\begin{lemma}
\label{lemma_simplified_formula_chi_j}
The coefficient $\chi^{[-1]}$ is given by:
$$
\chi^{[-1]}(\mu) = \lim_{z\to -i\infty} e^{iz} \cdot \Delta_{\mathrm{in}}^{[1]}(z,\mu).
$$
As a consequence, by the analyticity with respect to $\mu$ we obtain
\begin{equation}
\chi_j^{[-1]}=\lim_{z\to-i\infty}e^{iz}\cdot\Delta_{\mathrm{in},j}^{[1]}(z).
\label{eq_lim_chi_1}
\end{equation}
\end{lemma}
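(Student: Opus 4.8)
The plan is to read off the claimed limit from the closed-form expression \eqref{eq_def_Delta_inner} for $\Delta_{\mathrm{in}}$, valid on $\mathcal{D}^{\mathrm{in}}_{\domvar,\theta}\times\T_\sigma\times B_{\mu_0}$, by extracting its $\tau$-Fourier coefficient of index $1$ and then letting $z\to-i\infty$ along the negative imaginary axis, which lies in $\mathcal{D}^{\mathrm{in}}_{\domvar,\theta}$ as soon as $|\Im z|>\domvar$.

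First I would write $e^{ik(z-\tau+\mu\mathfrak{g})}=e^{ikz}\,e^{-ik\tau}\,e^{ik\mu\mathfrak{g}(z,\tau,\mu)}$ in \eqref{eq_def_Delta_inner} and expand the last factor in Fourier series in $\tau$, say $e^{ik\mu\mathfrak{g}(z,\tau,\mu)}=\sum_{\ell\in\mathbb{Z}}b_{k,\ell}(z,\mu)\,e^{i\ell\tau}$. Matching with \eqref{def_Fourier_Deltain} gives
\[
\Delta_{\mathrm{in}}^{[1]}(z,\mu)=\sum_{k<0}\chi^{[k]}(\mu)\,e^{ikz}\,b_{k,k+1}(z,\mu),\qquad\text{hence}\qquad e^{iz}\,\Delta_{\mathrm{in}}^{[1]}(z,\mu)=\sum_{k<0}\chi^{[k]}(\mu)\,e^{i(k+1)z}\,b_{k,k+1}(z,\mu).
\]
The term $k=-1$ equals $\chi^{[-1]}(\mu)\,b_{-1,0}(z,\mu)$ with $b_{-1,0}(z,\mu)=\frac1{2\pi}\int_0^{2\pi}e^{-i\mu\mathfrak{g}(z,\tau,\mu)}\,d\tau\to1$ as $z\to-i\infty$, because $|\mathfrak{g}(z,\tau,\mu)|\le M|z|^{-1}\to0$ uniformly in $(\tau,\mu)$. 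For $k\le-2$ one has $|e^{i(k+1)z}|=e^{-(k+1)\Im z}$ with $-(k+1)\ge1$, so this factor decays as $\Im z\to-\infty$, while $|b_{k,k+1}(z,\mu)|\le\|e^{ik\mu\mathfrak{g}(z,\cdot,\mu)}\|_{L^\infty(\T)}\le e^{|k|}$ once $|z|\ge\mu_0 M$. Combining these with the absolute convergence of \eqref{eq_def_Delta_inner} on $\mathcal{D}^{\mathrm{in}}_{\domvar,\theta}$ — which, enlarging $\domvar$ if necessary, furnishes a summable, $\mu$-uniform bound for $|\chi^{[k]}(\mu)|\,e^{-|k|\domvar/2}$ — one gets $\sum_{k\le-2}|\chi^{[k]}(\mu)|\,|e^{i(k+1)z}|\,|b_{k,k+1}(z,\mu)|\le C\,e^{-(|\Im z|-\domvar)}$, which tends to $0$ uniformly in $\mu$. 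Therefore $e^{iz}\Delta_{\mathrm{in}}^{[1]}(z,\mu)\to\chi^{[-1]}(\mu)$ as $z\to-i\infty$, uniformly for $\mu$ in a disc around $0$, which is the first claim.

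For the consequence, both $e^{iz}\Delta_{\mathrm{in}}^{[1]}(z,\cdot)$ (with $z$ fixed) and $\chi^{[-1]}(\cdot)$ are analytic near $\mu=0$ and the convergence just obtained is uniform in $\mu$ there, so by Cauchy's estimates the Taylor coefficients at $\mu=0$ converge too. Since $\Delta_{\mathrm{in}}^{[1]}(z,\mu)=\sum_{j\ge1}\Delta_{\mathrm{in},j}^{[1]}(z)\mu^j$ and $\chi^{[-1]}(\mu)=\sum_{j\ge1}\chi^{[-1]}_j\mu^j$, this yields $\lim_{z\to-i\infty}e^{iz}\Delta_{\mathrm{in},j}^{[1]}(z)=\chi^{[-1]}_j$ for every $j\ge1$, i.e.\ \eqref{eq_lim_chi_1}.

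The one non-routine point is the quantitative control of the tail of the series in \eqref{eq_def_Delta_inner}: one needs, from Theorem \ref{teo_inner_existence} (or directly from \cite{bib_Inner}), a geometrically weighted $\ell^1$ bound on $(\chi^{[k]}(\mu))_{k\le-2}$ that is uniform for $\mu\in B_{\mu_0}$, so that the interchanges of $\lim_{z\to-i\infty}$ with the $\tau$-average defining $\Delta_{\mathrm{in}}^{[1]}$ and with $\partial_\mu^j|_{\mu=0}$ are both legitimate; everything else is a direct manipulation of the explicit formula.
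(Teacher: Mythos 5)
Your proposal is correct and follows essentially the same route as the paper: you isolate the $k=-1$ harmonic of the explicit expression \eqref{eq_def_Delta_inner}, use $|\mathfrak{g}(z,\tau,\mu)|\leq M|z|^{-1}$ to make its correction vanish and the extra factor $e^{i(k+1)z}$ (in the paper, the sup-norm bound on $\Delta_{\mathrm{in},b}$) to kill the $k\leq -2$ tail, and then pass to the Taylor coefficients in $\mu$ by uniformity and Cauchy estimates, exactly as the paper's appeal to analyticity in $\mu$. The weighted summability of the $\chi^{[k]}(\mu)$ that you flag as the one non-routine ingredient is also used implicitly (absorbed into the constant $M$) in the paper's estimate of $\Delta_{\mathrm{in},b}$, so your treatment is on par with, and if anything more explicit than, the original.
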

\begin{proof}
Let us recall formula \eqref{eq_def_Delta_inner} for $\Delta_{\mathrm{in}}(z,\tau,\mu)$ in Theorem  \ref{teo_inner_existence}:
$$
\Delta_{\mathrm{in}}(z,\tau,\mu)=\sum_{k<0}\chi^{[k]}(\mu)\cdot e^{ik(z-\tau+\mu\mathfrak{g}(z,\tau,\mu))}.
$$
We can write
$$\Delta_{\mathrm{in}}(z,\tau,\mu)=\sum_{k<0}\chi^{[k]}(\mu)\cdot e^{ik(z-\tau)}+\sum_{k<0}\chi^{[k]}(\mu)\cdot e^{ik(z-\tau)}\cdot\left(e^{ik\mu\mathfrak{g}(z,\tau,\mu)}-1\right)=:\Delta_{\mathrm{in},a}(z,\tau,\mu)+\Delta_{\mathrm{in},b}(z,\tau,\mu)
$$ 
and we have that \begin{equation}\Delta_{\mathrm{in}}^{[1]}(z,\mu)=\Delta_{\mathrm{in},a}^{[1]}(z,\mu)+\Delta_{\mathrm{in},b}^{[1]}(z,\mu)=\chi^{-1}(\mu)\cdot e^{-iz}+\Delta_{\mathrm{in},b}^{[1]}(z,\mu).
\label{eq_aux_chi_2}
\end{equation}

For each $k<0$ the elements in the term $\Delta_{{\mathrm{in}},b}$ satisfy: 

\begin{align*}
\label{eq_aux_chi_1}
&\left|\chi^{[k]}(\mu)\cdot e^{ik(z-\tau)}\cdot\left(e^{ik\mu\mathfrak{g}(z,\tau,\mu)}-1\right)\right|\leq \cdot\left|\chi^{[k]}(\mu)\right|\cdot \left|e^{ik(z-\tau)}\right|\cdot |k \mu\mathfrak{g}(z,\tau,\mu)|\cdot e^{|k\mu\mathfrak{g}(z,\tau,\mu)|}\leq\\
&\left|\chi^{[k]}(\mu)\right|\cdot e^{|k|(\Im(z)+|\mu\mathfrak{g}(z,\tau,\mu)|)}\cdot|k \mu\mathfrak{g}(z,\tau,\mu)|.
\end{align*}
By Theorem \ref{teo_inner_existence} we know that $|\mathfrak{g}(z,\tau,\mu)|\leq M|z|^{-1}$. In particular, for $z$ with $-\Im(z)$ big enough we have $|\mathfrak{g}(z,\tau,\mu)|<1$ and $\Im(z)+|\mathfrak{g}(z,\tau,\mu|<0$. Therefore,

\begin{align*}
|\Delta_{\mathrm{in},b}|&\leq \sum_{k<0}\left|\chi^{[k]}(\mu)\right|\cdot e^{|k|(\Im(z)+|\mu\mathfrak{g}(z,\tau,\mu)|)}\cdot|k \mu\mathfrak{g}(z,\tau,\mu)|\leq M
\cdot e^{-\Im(z)}\cdot\left|\mu\mathfrak{g}(z,\tau,\mu)\right|.
\end{align*}
Hence, $\lim_{z\to-i\infty}\left|e^{iz}\cdot \Delta_{\mathrm{in,b}}\right|=0$ and, in particular, $\lim_{z\to-i\infty}e^{iz}\cdot\Delta_{\mathrm{in},b}^{[1]}=0$. 
Taking the limit in \eqref{eq_aux_chi_2} the result follows.

\end{proof}

Item 1 and 2 of Theorem \ref{teo_main_second} are a straightforward consequence of \ref{lemma_simplified_formula_chi_j} 
and \ref{lemma_Fourier_coeff_inner}.

\subsection{Computing \texorpdfstring{$\chi_1^{[-1]}$}{TEXT} and \texorpdfstring{$\chi_2^{[-1]}$}{TEXT}} 

To finish the proof of Theorem \ref{teo_main_second}, we present the explicit calculations for $\chi_1^{[-1]}$ and $\chi_2^{[-1]}$. 
For this, we integrate explicitly \eqref{eq_inner_mu_1} and \eqref{eq_inner_mu_2} to obtain $\psi_1^{\pm,[1]}$ and $\psi_2^{\pm,[1]}$, respectively. 
We subtract the stable and unstable solutions to get the first two terms of the Taylor expansion in $\mu$ of $\Delta_{\mathrm{in}}^{[1]}(\mu)$ and we apply Lemma \ref{lemma_simplified_formula_chi_j}.

\subsubsection{Computation of \texorpdfstring{$\mathbf{\chi_1^{[-1]}}$}{TEXT}}

We solve equation \eqref{eq_inner_mu_1} to obtain:
\begin{equation}
\psi_1^\pm(z,\tau)=-2\sum_{k\in\mathbb{Z}}g^{[k]}\int_{\pm\infty}^0\frac{1}{(z+t)^2}e^{ik(\tau+t)}dt.
\label{eq_sol_inner_j_1}
\end{equation}
Therefore
$$
\Delta_{\mathrm{in},1}(z,\tau)=-2\sum_{k\in\mathbb{Z}} e^{ik(\tau-z)}g^{[k]}\int_{-\infty+z}^{+\infty+z}\frac{e^{ikt}}{t^2}.
$$
We compute the integral by residues. As $\Im(z)<0$:
$$
\int_{-\infty+z}^{+\infty+z}\frac{e^{imt}}{t^2}dt=\left\{\begin{array}{ccl}-2m\pi && \text{if }m>0 \\ 0 & & \text{if }m\leq 0\end{array}\right.
$$
so we obtain:
\begin{equation}
\Delta_{\mathrm{in},1}(z,\tau)=\sum_{k>0}e^{ik(\tau-z)}g^{[k]}4k\pi.
\label{eq_second_comparison_order_1}
\end{equation}
From here the first Fourier coefficient is
$
\Delta_{\mathrm{in},1}^{[1]}=4\pi e^{-iz}g^{[1]}.
$
Now we can apply Lemma \ref{lemma_simplified_formula_chi_j}:
\begin{equation}
\chi_1^{[-1]}=\lim_{z\to-i\infty}e^{iz}\cdot\Delta_{\mathrm{in},1}^{[1]}=4\pi g^{[1]}.
\label{formula_order_mu}
\end{equation}


\subsubsection{Computation of \texorpdfstring{$\mathbf{\chi_2^{[-1]}}$}{TEXT}}

Following the same scheme we integrate \eqref{eq_inner_mu_2} for $j=2$. We write the derivative of the solution \eqref{eq_sol_inner_j_1} as follows:
$$
\partial_z\psi_1^\pm(z,\tau)=4\cdot\sum_{k\in\mathbb{Z}}g^{[k]}\cdot e^{ik(\tau-z)}\int_{\pm\infty+z}^z\frac{e^{ikt}}{t^3}dt.
$$
Plugging this expression in \eqref{eq_inner_mu_2}, the equation becomes:

$$
\partial_\tau\psi_2^\pm(z,\tau)+\partial_z\psi_2^\pm(z,\tau)=2z^2\sum_{k\in\mathbb{Z}}e^{ik(\tau-z)}\sum_{l\in\mathbb{Z}}g^{[l]}\cdot g^{[k-l]}\left(\int_{\pm\infty}^z\frac{e^{ilt}}{t^3}dt\right)\left(\int_{\pm\infty}^z\frac{e^{i(k-l)t}}{t^3}dt\right).
$$
We solve the equation by integrating and reorganize the terms:

\begin{align*}
\psi_2^+(z,\tau)&=\int_{\pm\infty}^{0}2(z+s)^2\sum_{k\in\mathbb{Z}}e^{ik(\tau-z)}\sum_{l\in\mathbb{Z}}g^{[l]}\cdot g^{[k-l]}\left(\int_{\pm\infty}^{z+s}\frac{e^{ilt}}{t^3}dt\right)\left(\int_{\pm\infty}^{z+s}\frac{e^{i(k-l)t}}{t^3}dt\right)ds=\\
&=2\sum_{k\in\mathbb{Z}}e^{ik(\tau-z)}\sum_{l\in\mathbb{Z}}g^{[l]}\cdot g^{[k-l]}\int_{\pm\infty}^zs^2\left(\int_{\pm\infty}^{s}\frac{e^{ilt}}{t^3}dt\right)\left(\int_{\pm\infty}^{s}\frac{e^{i(k-l)t}}{t^3}dt\right)ds.
\end{align*}

From this formula we extract the first harmonic by taking the term $k=1$. Note that we can multiply by $2$ and take the sum  over $l$ only for $l>1$.

$$
 e^{iz}\cdot\psi_2^{\pm,[1]}=4\sum_{l>1}g^{[l]}\cdot g^{[1-l]}\int_{\pm\infty}^zs^2\left(\int_{\pm\infty}^{s}\frac{e^{ilt}}{t^3}dt\right)\cdot \left(\int_{\pm\infty}^{s}\frac{e^{i(1-l)t}}{t^3}dt\right)ds.
$$
Integration by parts yields:
\begin{align*}
    \int_{\pm\infty}^z
s^2&\left(\int_{\pm\infty}^s\frac{e^{i(1-l)t}}{t^3}dt\right)\left(\int_{\pm\infty}^{s}\frac{e^{ilt}}{t^3}dt\right)ds=\left.\frac{1}{3}s^3\left(\int_{\pm\infty}^{s}\frac{e^{i(1-l)t}}{t^3}dt\right)\left(\int_{\pm\infty}^{s}\frac{e^{ilt}}{t^3}dt\right)\right]_{\pm\infty}^z\\&-\frac{1}{3}\int_{\pm\infty}^ze^{i(1-l)s}\left(\int_{\pm\infty}^{s}\frac{e^{ilt}}{t^3}dt\right)ds-\frac{1}{3}\int_{\pm\infty}^ze^{ils}\left(\int_{\pm\infty}^s\frac{e^{i(1-l)t}}{t^3}dt\right)ds=
\\
=&\frac{1}{3}z^3\left(\int_{\pm\infty}^{z}\frac{e^{(1-l)t}}{t^3}dt\right)\left(\int_{\pm\infty}^{z}\frac{e^{ilt}}{t^3}dt\right)-\frac{1}{3}\int_{\pm\infty}^ze^{i(1-l)s}\left(\int_{\pm\infty}^{s}\frac{e^{ilt}}{t^3}dt\right)ds
\\
&-\frac{1}{3}\int_{\pm\infty}^ze^{ils}\left(\int_{\pm\infty}^s\frac{e^{i(1-l)t}}{t^3}dt\right)ds=:\frac{1}{3}(A-B-C).
\end{align*}
Since $l>1$, further integration by parts leads to:
\begin{align*}
B=\int_{\pm\infty}^ze^{i(1-l)s}\left(\int_{\pm\infty}^{s}\frac{e^{ilt}}{t^3}dt\right)ds&=\frac{1}{i(1-l)}\left.e^{i(1-l)s}\int_{\pm\infty}^s\frac{e^{ilt}}{t^3}\right]_{\pm\infty}^{z}-\frac{1}{i(1-l)}\int_{\pm\infty}^z\frac{e^{is}}{s^3}ds=\\
&=\frac{1}{i(1-l)}e^{i(1-l)z}\int_{\pm\infty}^z\frac{e^{ilt}}{t^3}dt-\frac{1}{i(1-l)}\int_{\pm\infty}^z\frac{e^{is}}{s^3}ds
\end{align*}
and
\begin{align*}
C=\int_{\pm\infty}^ze^{ils}\left(\int_{\pm\infty}^{s}\frac{e^{i(1-l)t}}{t^3}dt\right)ds&=\frac{1}{il}\left.e^{ils}\int_{\pm\infty}^s\frac{e^{i(1-l)t}}{t^3}\right]_{\pm\infty}^{z}-\frac{1}{il}\int_{\pm\infty}^z\frac{e^{is}}{s^3}ds=\\
&=\frac{1}{il}e^{ilz}\int_{\pm\infty}^z\frac{e^{i(1-l)t}}{t^3}dt-\frac{1}{il}\int_{\pm\infty}^z\frac{e^{is}}{s^3}ds.
\end{align*}
Gathering the formulas:

\begin{align*}
e^{iz}\cdot\psi^{\pm,[1]}_{2}(z)=\frac{4}{3}\sum_{l>1}g^{[1-l]}g^{[l]}&\left\{z^3\int_{\pm\infty}^{z}\frac{e^{i(1-l)t}}{t^3}dt\int_{\pm\infty}^{z}\frac{e^{ilt}}{t^3}dt
-\frac{1}{i(1-l)}e^{i(1-l)z}\int_{\pm\infty}^z\frac{e^{ilt}}{t^3}dt\right.\\
+&\left.\frac{1}{i(1-l)}\int_{\pm\infty}^z\frac{e^{is}}{s^3}
-\frac{1}{il}e^{ilz}\int_{\pm\infty}^z\frac{e^{i(1-l)t}}{t^3}dt+\frac{1}{il}\int_{\pm\infty}^z\frac{e^{is}}{s^3}ds\right\}.
\end{align*}
Now we subtract $\psi_2^{-,[1]}$ and $\psi_2^{+,[1]}$. We first point out that 
$$
\int_{\pm\infty}^z\frac{e^{i(1-l)t}}{t^3}dt=\int_{-i\infty}^z\frac{e^{i(1-l)t}}{t^3}.
$$
Indeed, the integrand converges exponentially as $z\to-i\infty$ (since  $(1-l)<0$) and there are no singularities in $\{\Im(z)<0\}$ (note that the paths that join $-\infty$ and $\infty$ in the previous integral are in the region $\{\Im(z)<0\}$). 
Hence, we can change paths of integration. 
On the other hand, integrating by residues:
$$
\int_{-\infty}^\infty\frac{e^{imt}}{t^3}dt=
\left\{\begin{array}{ccl}
-i\pi m^2&&m>0\\
0&&m\leq 0
\end{array}\right..
$$
With these claims:

\begin{align*}
e^{iz}&\cdot\Delta_{\mathrm{in},2}^{[1]}(z,\tau)
=e^{iz}\cdot(\psi_{2}^{-,[1]}-\psi_2^{+,[1]})
=+\frac{4}{3}\sum_{l>1}g^{[l]}g^{[1-l]}\left\{z^3\int_{-i\infty}^z\frac{e^{i(1-l)t}}{t^3}dt\cdot\left(
\int_{-\infty}^{+\infty}\frac{e^{ilt}}{t^3}dt
\right)\right.\\
&\left.-\frac{1}{i(1-l)}e^{i(1-l)}\int_{-\infty}^{+\infty}\frac{e^{ilt}}{t^3}dt+\frac{1}{i(1-l)}\int_{-\infty}^{+\infty}\frac{e^{is}}{s^3}-\frac{1}{il}e^{ilz}\int_{-\infty}^{+\infty}\frac{e^{i(1-l)t}}{t^3}+\frac{1}{il}\int_{-\infty}^{+\infty}\frac{e^{is}}{s^3}ds\right\}\\
&
=+\frac{4}{3}\sum_{l>1}g^{[l]}\cdot g^{[1-l]}\left\{-i\pi l^2z^3\int_{-i\infty}^z\frac{e^{i(1-l)t}}{t^3}dt+\frac{\pi l^2}{(1-l)}e^{i(1-l)z}-\frac{\pi}{(1-l)}-\frac{\pi}{l}\right\}.
\end{align*}
We apply now Lemma \ref{lemma_simplified_formula_chi_j}. For that we take $z\to-\infty$ and use the following inequality (valid for $k>1$):
$$
\left|z^3\int_{-i\infty}^z\frac{e^{i(1-k)t}}{t^3}dt\right|\leq M\cdot\frac{|e^{i(1-k)z}||z|^3}{|z|^3}=M\cdot|e^{i(1-k)z}|,
$$
whence
$$
\lim_{z\to-i\infty}z^3\int_{-i\infty}^z\frac{e^{i(1-k)t}}{t^3}dt=0.
$$
Therefore, we obtain:
\begin{align}
\chi_2^{[-1]}&=\lim_{z\to-i\infty}e^{iz}\cdot\Delta_{\mathrm{in},2}^{[1]}
=-\frac{4}{3}
\sum_{k>0}g^{[k]}\cdot g^{[1-k]}\left(\frac{1}{(1-k)}+\frac{1}{k}\right)\pi
=-\frac{4\pi}{3}\sum_{k>1}\frac{g^{[k]}\cdot g^{[1-k]}}{k(1-k)}.
\label{formula_order_mu_2}
\end{align}

\section{Splitting formula: Proof of \ref{th:maintheorem}}\label{sec_app_inner_equation}

As in the previous section we fix $g$ a periodic function with $n=n(g)$, see~\eqref{eq_definition_n}. We still use $M$ to refer to constants independent of $\mu$ and $\varepsilon$.

The following straightforward consequence of Cauchy's theorem will be used along this section without explicit mention.  
\begin{lemma}
Let $\eta>0$, $\nu \in \mathbb{N}$ and $h:B_\eta\subset\mathbb{C}\longrightarrow\mathbb{C}$ be an analytic function such that $|h(\mu)|\leq M_h$. Let us write $h(\mu)=\sum_{j\geq 0}h_j\cdot\mu^j$ for the power expansion around $\mu=0$. There exists one constant $M_1$ depending on $\nu$ such that for all $j=0,\cdots,\nu$, we have that $
|h_j|\leq M_1\cdot M_h$.
\label{lemma_bounds_coefficients_powerseries}
\end{lemma}

\subsection{Preliminaries and heuristics of the proof} 
 To proceed with the proof of Theorem \ref{th:maintheorem} we need first to state some previous results about the splitting of separatrices of system~\eqref{eq_model} which can be found in~\cite{bib_exp_small}. To this end, let us first introduce some notation and setting. 

Recall that  the functions $\widehat{T}^{\mathrm{u},\mathrm{s}}(u,\tau,\mu)$ are defined in \eqref{eq:tus} in terms of the generating functions $S^{s,u}(x,\tau)$ which satisfy the Hamilton-Jacobi equation \eqref{eq:HJnova}. 
Moreover, using this equation, the relation
\eqref{eq_chain_rule} and that  $\widehat{T}^{\mathrm{u},\mathrm{s}}(u,\tau,\mu)=T_0(u)+T^{\mathrm{u},\mathrm{s}}(u,\tau,\mu)$ (see \eqref{eq:T}), one easily obtains the equation for $T^{\mathrm{u},\mathrm{s}}$

\begin{equation}
\frac{1}{\varepsilon}\partial_{\tau}T^{\mathrm{u},\mathrm{s}}(u,\tau,\mu)+\partial_uT^{\mathrm{u},\mathrm{s}}(u,\tau,\mu)
=
-\frac{1}{8}\cosh^2(u)(\partial_uT^{\mathrm{u},\mathrm{s}})^2(u,\tau,\mu)+2\mu \frac{g(\tau)}{\cosh^2(u
)}.
\label{eq_HJ}
\end{equation}
The solutions that describe the stable and unstable manifolds are characterized by being $2\pi$-periodic in $\tau$ and satisfying the boundary conditions (see \eqref{eq_boundarcond_T})
\begin{equation}
\lim_{\Re(u)\to\mp\infty}\cosh^2(u)\cdot\partial_uT^{\mathrm{u},\mathrm{s}}(u,\tau,\mu)=0.
\label{eq_bound_cond}
\end{equation}

These solutions are well understood (\cite{bib_exp_small}): they are known to exist in suitable complex domains, to be analytic in all variables, $2\pi$-periodic in $\tau$ and to present exponential decay as $\Re(u)\to \pm \infty$. Furthermore, they can be analytically continued to complex regions reaching $\varepsilon$-neighborhoods of the singularities of the unperturbed homoclinic trajectory (see~\eqref{eq_homoclinica}) closest to the real axis, that is, $u=\pm i\frac{\pi}{2}$.

Since we want to study the difference between these solutions, we only need to know how they behave in a common domain. Fix $\phi\in(0,\pi/2)$ and take the following domain (see \ref{fig_outer_domains}): 
\begin{equation}
\mathcal{D}_{\domvar,\phi}^{\mathrm{out}}=\left\{u\in\mathbb{C};|\Im(u)|<-\tan(\phi)\cdot \Re(u)+\frac{\pi}{2}-\domvar\varepsilon,|\Im(u)|<\tan(\phi)\cdot \Re(u)+\frac{\pi}{2}-\domvar\varepsilon, \right\},
\label{eq_definition_outer_domains}
\end{equation}
with $\domvar>0$. 
Moreover, as we want to keep track of the analyticity
respect to $\mu$, from now on we will take $\mu \in B_{\mu_0}$, the complex ball centered at $0$ and of radius $ \mu_0$.

\begin{figure}[H]
    \centering
    \begin{overpic}[width=0.6\textwidth]{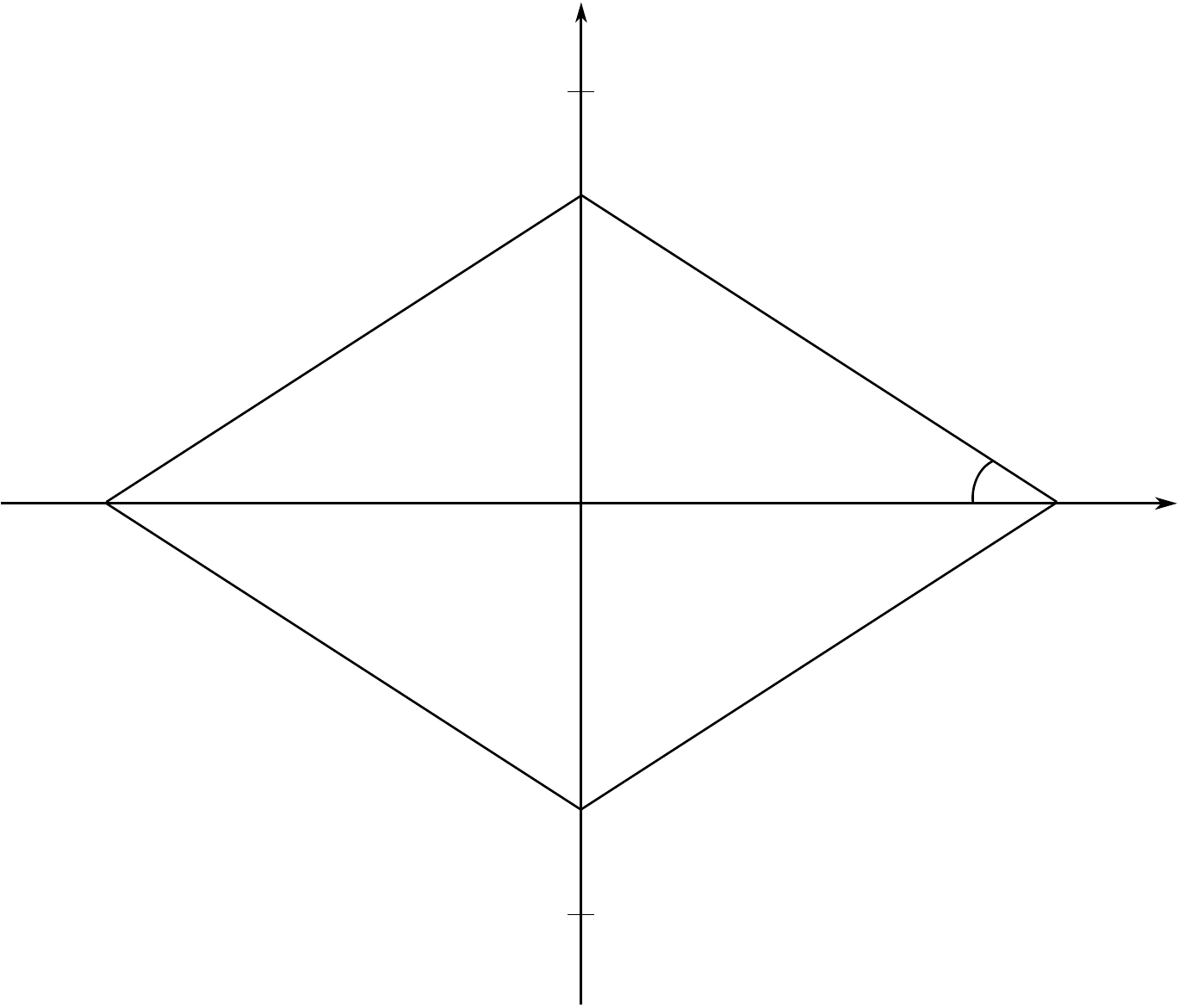}
    \put(52,77){$i\frac{\pi}{2}$}
    \put(52,7){$-i\frac{\pi}{2}$}
    \put(52,68){$i\left(\frac{\pi}{2}-\domvar\varepsilon\right)$}
    \put(79,45){$\phi$}
    \put(25,47){$\mathcal{D}^{\mathrm{out}}_{\domvar,\phi}$}
    \end{overpic}
    \caption{Domain $\mathcal{D}^{\mathrm{out}}_{\domvar,\phi}$.}
    \label{fig_outer_domains}
\end{figure}

In this domain we can formulate the following theorem by paraphrasing Theorem $4.4$ in \cite{bib_exp_small}:

\begin{theorem}(\cite{bib_exp_small}) 
Fix $\phi\in \left (0,\frac{\pi}{2}\right )$ and $\mu_0>0$. There exist $\domvar_0>0$ and $\varepsilon_0>0$ 
such that $\forall \mu \in B_{\mu_0}$, $\forall\varepsilon\in(0,\varepsilon_0)$ and  $\forall \domvar \ge \domvar_0$ satisfying that $\varepsilon \domvar <1$, the Hamilton-Jacobi equation \eqref{eq_HJ} has solutions $T^{\mathrm{u},\mathrm{s}}(u,\tau,\mu)$ analytic in $u,\tau,\mu$ and periodic in $\tau$ satisfying the boundary conditions \eqref{eq_bound_cond} such that they are defined in the domain $\mathcal{D}_{\domvar,\phi}^{\mathrm{out}}\times \T_\sigma\times B_{\mu_0}$ and in this domain the following bound holds:
$$
|\partial_uT^{\mathrm{u},\mathrm{s}}(u,\tau,\mu)|\leq M\cdot\frac{|\mu|\cdot\varepsilon}{|u^2+(\pi/2)^2|^3}.
$$
\label{teo_existence_outer}
\end{theorem}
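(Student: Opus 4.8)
The plan is to reproduce the fixed point scheme of \cite[Theorem 4.4]{bib_exp_small} for equation~\eqref{eq_HJ} specialized to our zero mean perturbation $g$, while carrying along the analytic dependence on $\mu$ uniformly over the complex ball $B_{\mu_0}$; the statement then follows by restricting the resulting solutions to the common domain $\mathcal{D}^{\mathrm{out}}_{\domvar,\phi}$. First I would introduce the transport operator $\mathcal{L}:=\tfrac1\varepsilon\partial_\tau+\partial_u$ and its right inverses along the complex characteristics $t\mapsto(u+t,\tau+t/\varepsilon)$, namely $\mathcal{G}^{\mathrm{u}}h(u,\tau)=\int_{-\infty}^{0}h(u+t,\tau+t/\varepsilon)\,dt$ and $\mathcal{G}^{\mathrm{s}}h(u,\tau)=-\int_{0}^{+\infty}h(u+t,\tau+t/\varepsilon)\,dt$, which are well defined on suitable unbounded domains $\mathcal{D}^{\mathrm{u}}$ (reaching $\Re(u)\to-\infty$) and $\mathcal{D}^{\mathrm{s}}$ (reaching $\Re(u)\to+\infty$) with $\mathcal{D}^{\mathrm{out}}_{\domvar,\phi}\subset\mathcal{D}^{\mathrm{u}}\cap\mathcal{D}^{\mathrm{s}}$, which preserve $2\pi$-periodicity in $\tau$, and which commute with $\partial_u$. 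Since these operators encode exactly the decay requested in~\eqref{eq_bound_cond}, solving~\eqref{eq_HJ} reduces to the fixed point equation
\[
T^{\mathrm{u},\mathrm{s}}=\mathcal{G}^{\mathrm{u},\mathrm{s}}\!\left[-\tfrac18\cosh^2(u)\bigl(\partial_uT^{\mathrm{u},\mathrm{s}}\bigr)^2+2\mu\,\frac{g(\tau)}{\cosh^2(u)}\right],
\]
and, since it suffices to produce $\partial_uT^{\mathrm{u},\mathrm{s}}$, one differentiates this equation in $u$ and runs the contraction below for $\varphi^{\mathrm{u},\mathrm{s}}:=\partial_uT^{\mathrm{u},\mathrm{s}}$.

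The core is a contraction argument for $\varphi^{\mathrm{u},\mathrm{s}}$ in a Banach space of functions analytic on $\mathcal{D}^{\mathrm{u}}\times\mathbb{T}_\sigma\times B_{\mu_0}$ (respectively on $\mathcal{D}^{\mathrm{s}}\times\mathbb{T}_\sigma\times B_{\mu_0}$) endowed with a weighted supremum norm, the weight being chosen to match both the blow up $\lvert u^2+(\pi/2)^2\rvert^{-3}$ near the singularities $u=\pm i\pi/2$ and the exponential decay as $\lvert\Re(u)\rvert\to\infty$. The estimates needed are: (i) $\mathcal{G}^{\mathrm{u},\mathrm{s}}$ improves the polynomial part of the weight by one power, uniformly for $\varepsilon\in(0,\varepsilon_0)$; (ii) since $g$ has zero $\tau$-average, so does the source term, and applying $\mathcal{G}^{\mathrm{u},\mathrm{s}}$ to it and integrating by parts once in the characteristic parameter produces a factor $\varepsilon/k$ on its $k$-th harmonic ($k\neq0$), summable thanks to the analyticity of $g$, so that $\mathcal{G}^{\mathrm{u},\mathrm{s}}$ applied to the source is $\mathcal{O}(\lvert\mu\rvert\varepsilon)$ in the weighted norm — this is precisely where the extra $\varepsilon$ in the bound comes from; (iii) the nonlinearity is quadratic, hence for $\mu_0,\varepsilon_0$ small enough the fixed point map sends a ball of radius $\sim M\lvert\mu\rvert\varepsilon$ into itself and is a contraction there, with all constants uniform in $\mu\in B_{\mu_0}$. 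The unique fixed point is analytic in $(u,\tau,\mu)$ because integration along characteristics and the polynomial operations involved preserve joint analyticity; it satisfies $\lvert\partial_uT^{\mathrm{u},\mathrm{s}}(u,\tau,\mu)\rvert\le M\lvert\mu\rvert\varepsilon\,\lvert u^2+(\pi/2)^2\rvert^{-3}$ and, by the decay built into the norm, the boundary conditions~\eqref{eq_bound_cond}; unwinding $\mathcal{G}^{\mathrm{u},\mathrm{s}}$ shows it solves~\eqref{eq_HJ}, and it is $2\pi$-periodic in $\tau$. Restricting to $\mathcal{D}^{\mathrm{out}}_{\domvar,\phi}$ yields Theorem~\ref{teo_existence_outer}.

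The step I expect to be the main obstacle is estimate (i): the characteristics through $\mathcal{D}^{\mathrm{out}}_{\domvar,\phi}$ are horizontal lines that approach the singular points $u=\pm i\pi/2$ only up to a distance of order $\domvar\varepsilon$, so one must control $\mathcal{G}^{\mathrm{u},\mathrm{s}}$ applied to a function behaving like $\lvert u^2+(\pi/2)^2\rvert^{-\alpha}$ and show that the gain of one power comes with a constant \emph{independent of $\varepsilon$}, rather than one blowing up as $\varepsilon\to0$; this is exactly where the geometry of the domain — the opening angle $\phi$ and the offset $\domvar\varepsilon$ — is used, and it is the technical heart of \cite{bib_exp_small}. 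By contrast, the only genuinely new point with respect to \cite{bib_exp_small}, namely the analyticity and uniformity of all bounds over the complex ball $B_{\mu_0}$ rather than merely a real interval of $\mu$, is automatic once the fixed point is set up in the $\mu$-analytic Banach space described above (a uniform limit of functions analytic in $\mu$ is analytic in $\mu$, and Cauchy estimates, as in Lemma~\ref{lemma_bounds_coefficients_powerseries}, then transfer the bounds to the Taylor coefficients in $\mu$).
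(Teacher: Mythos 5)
This statement is not proved in the paper at all: it is imported verbatim (``by paraphrasing Theorem 4.4 in \cite{bib_exp_small}''), so there is no internal proof to compare against; your sketch is essentially a faithful outline of the argument of that reference, and it matches the machinery the paper itself deploys in Appendix A for the related bound on $\mathcal{C}$ (the weighted spaces $\mathcal{P}_\alpha$, the right inverse of $\tfrac1\varepsilon\partial_\tau+\partial_u$ along characteristics, the gain of $\varepsilon$ on zero-average sources, the $\mathcal{G}\circ\partial_u$ boundedness to absorb the differentiated quadratic term). The only point worth adjusting is in your step (iii): the theorem fixes an \emph{arbitrary} $\mu_0>0$ and pushes all smallness onto $\varepsilon_0$ and $\domvar_0$, so you should close the contraction using the factor $|\mu|/\domvar$ produced when converting the quadratic term from $\mathcal{P}_4$ back to $\mathcal{P}_3$ (i.e.\ take $\domvar_0$ large compared with $\mu_0$), rather than by shrinking $\mu_0$, which would only yield a weaker version of the statement.
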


Next theorem, which is an an adaptation of the results from~\cite{bib_exp_small} (Proposition 4.22, Theorem 4.23, Corollary 4.24), 
gives a characterization of the difference $\Delta(u,\tau,\mu)$. 

\begin{theorem} 
Under the same assumptions of Theorem \ref{teo_existence_outer},  
there exists a real analytic function $\mathcal{C}(u,\tau,\mu)$ defined in $\mathcal{D}_{\domvar,\phi}^{\mathrm{out}}\times \T_\sigma \times B_{\mu_0}$ satisfying the following bounds:
\begin{equation}\label{boundsCcal}
|\mathcal{C}(u,\tau,\mu)|\leq M\cdot\frac{|\mu|\cdot\varepsilon}{|u^2+(\pi/2)^2|}, \qquad 
    |\partial_u\mathcal{C}(u,\tau,\mu)|\leq M\cdot\frac{|\mu|}{\domvar\cdot{|u^2+(\pi/2)^2|}},
\end{equation}
and such that the difference between the parameterizations $T^{\mathrm{u},\mathrm{s}}$ of the stable and unstable manifolds is given in $\mathcal{D}_{\domvar,\phi}^{\mathrm{out}}\times \T_\sigma \times B_{\mu_0}$ by the expression:
\begin{equation}
    \Delta(u,\tau,\mu)=\sum_k\Upsilon^{[k]}(\mu)\cdot e^{ik\left(u/\varepsilon-\tau+\mathcal{C}(u,\tau,\mu)\right)},\label{eq_formula_diff_outer}
\end{equation}
where $\Upsilon^{[k]}(\mu)$ are analytic functions of $\mu \in B_{\mu_0}$.

In addition, for $u\in \mathbb{R} \cap \mathcal{D}_{\domvar,\phi}^{\mathrm{out}}$, $\tau\in \mathbb{R}$, $\mu \in B_{\mu_0}$ and $\varepsilon \in (0,\varepsilon_0)$ we have that

\begin{equation}
\partial_u\Delta(u,\tau,\mu)=
\frac{2e^{-\frac{\pi}{2\varepsilon}}}{\varepsilon^2}\cdot \left[\Im\left(\chi^{[-1]}(\mu)\cdot e^{i\left(\tau-u/\varepsilon\right)}\right)+\mathcal{O}\left(\frac{|\mu|}{\log(1/\varepsilon)}\right)\right], 
\label{eq_known_splitting_inner}
\end{equation}
where $\chi^{[-1]}(\mu)$ is defined in Theorem \ref{teo_inner_existence}.
\label{teo_difference_manifolds}
\end{theorem}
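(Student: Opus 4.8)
The plan is to follow the scheme of \cite{bib_exp_small}, adapting it so as to keep explicit track of the dependence on $\mu\in B_{\mu_0}$. First I would note that, since $T^{\mathrm{u}}$ and $T^{\mathrm{s}}$ both solve \eqref{eq_HJ} with the \emph{same} inhomogeneous term $2\mu g(\tau)/\cosh^2(u)$, the difference $\Delta=T^{\mathrm{u}}-T^{\mathrm{s}}$ solves the homogeneous linear transport equation
\[
\frac{1}{\varepsilon}\partial_\tau\Delta+\Bigl(1+\tfrac{1}{8}\cosh^2(u)\,\partial_u\bigl(T^{\mathrm{u}}+T^{\mathrm{s}}\bigr)\Bigr)\partial_u\Delta=0
\]
on $\mathcal{D}^{\mathrm{out}}_{\domvar,\phi}\times\T_\sigma\times B_{\mu_0}$. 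By Theorem \ref{teo_existence_outer} the coefficient of $\partial_u\Delta$ equals $1+\mathcal{O}\bigl(|\mu|\varepsilon\,|u^2+(\pi/2)^2|^{-2}\bigr)$, a small perturbation of $1$. Integrating the characteristic system $\dot u=1+\tfrac18\cosh^2(u)\,\partial_u(T^{\mathrm{u}}+T^{\mathrm{s}})$, $\dot\tau=1/\varepsilon$ produces a first integral of the form $u/\varepsilon-\tau+\mathcal{C}(u,\tau,\mu)$, where $\mathcal{C}$ is the $\varepsilon$-integral of the perturbative part along the flow; plugging the bound on $\partial_u(T^{\mathrm{u}}+T^{\mathrm{s}})$ into this integral gives both estimates in \eqref{boundsCcal}, and the (real-)analytic dependence of $\mathcal{C}$ on $(u,\tau,\mu)$ is inherited from that of $T^{\mathrm{u},\mathrm{s}}$. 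Since $\Delta$ is annihilated by a first-order operator admitting this first integral, $\Delta$ must depend only on $u/\varepsilon-\tau+\mathcal{C}$; imposing $2\pi$-periodicity in $\tau$ then yields the expansion \eqref{eq_formula_diff_outer}, and Cauchy estimates in $\mu$ (Lemma \ref{lemma_bounds_coefficients_powerseries}) give the analyticity of the $\Upsilon^{[k]}(\mu)$.

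Next I would estimate the coefficients $\Upsilon^{[k]}$ and identify $\Upsilon^{[-1]}$. Using that $\Delta$ and $\mathcal{C}$ remain analytic up to an $\mathcal{O}(\varepsilon)$-neighbourhood of the singularity $u=i\pi/2$ of the unperturbed separatrix (Theorem \ref{teo_existence_outer}), a Cauchy estimate of a Fourier coefficient of $\Delta$ along a horizontal segment at height $\Im(u)=\pi/2-\domvar\varepsilon$ gives, for $k<0$, $|\Upsilon^{[k]}(\mu)|\le M|\mu|\,e^{|k|\domvar}e^{-\pi|k|/(2\varepsilon)}$, with the conjugate statement for $k>0$ by reality; hence every $|k|\ge2$ term is exponentially smaller than the $|k|=1$ ones. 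To pin down $\Upsilon^{[-1]}(\mu)$ I would match the outer solutions with the inner solutions $\psi^{\pm}$ of \eqref{eq_inner_equation}: in the inner variable $z=(u-i\pi/2)/\varepsilon$ equation \eqref{eq_HJ} becomes the $\varepsilon$-independent inner equation up to higher order, and a contraction argument in a suitably weighted space, carried out in an overlap region where $|u-i\pi/2|$ is a fractional power of $\varepsilon$, shows that the rescaled $\partial_uT^{\mathrm{u},\mathrm{s}}$ are well approximated there by $\partial_z\psi^{\pm}$. Transporting \eqref{eq_def_Delta_inner} for $\Delta_{\mathrm{in}}=\psi^{-}-\psi^{+}$ back through the changes of variable and of phase then yields $\Upsilon^{[-1]}(\mu)=\varepsilon^{-1}e^{-\pi/(2\varepsilon)}\,\chi^{[-1]}(\mu)\bigl(1+\mathcal{O}(|\mu|/\domvar)\bigr)$, the exponential factor coming from the height $\pi/2$ of the singularity and the $\varepsilon^{-1}$ from the rescaling. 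This is exactly the content of Proposition 4.22, Theorem 4.23 and Corollary 4.24 of \cite{bib_exp_small}; the only addition is to check that each estimate is uniform for $\mu\in B_{\mu_0}$, which it is.

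Finally, to obtain \eqref{eq_known_splitting_inner} I would restrict \eqref{eq_formula_diff_outer} to real $u,\tau$ and differentiate in $u$: on the real line $\mathcal{C}=\mathcal{O}(|\mu|\varepsilon)$ and $\partial_u\mathcal{C}=\mathcal{O}(|\mu|/\domvar)$, the $k=\pm1$ terms contribute $\pm\tfrac{i}{\varepsilon}\bigl(1+\partial_u\mathcal{C}\bigr)\Upsilon^{[\pm1]}e^{\pm i(u/\varepsilon-\tau+\mathcal{C})}$, and combining them via $\Upsilon^{[1]}=\overline{\Upsilon^{[-1]}}$ and the identification above gives the leading term $\tfrac{2e^{-\pi/(2\varepsilon)}}{\varepsilon^2}\Im\bigl(\chi^{[-1]}(\mu)e^{i(\tau-u/\varepsilon)}\bigr)$; the $|k|\ge2$ terms, together with the $\partial_u\mathcal{C}$ correction, are absorbed into the error, where the $\mathcal{O}(|\mu|/\log(1/\varepsilon))$ bound comes from choosing $\domvar\sim\log(1/\varepsilon)$, the largest value keeping $\domvar\varepsilon\to0$. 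The genuinely delicate step — and where essentially all the work of \cite{bib_exp_small} lies — is the inner--outer matching that yields $\Upsilon^{[-1]}$ in terms of $\chi^{[-1]}$: controlling the difference between the true outer solutions and the rescaled inner solutions in the overlap region via the fixed-point argument, and then pushing the asymptotics \eqref{eq_def_Delta_inner} through the nontrivial $\mu$-dependent change of phase. Deriving the linear equation, straightening it, the Cauchy/reality bounds on the $\Upsilon^{[k]}$, and the final real-line computation are all routine once that machinery is in place.
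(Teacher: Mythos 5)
Your overall architecture matches the paper's: derive the homogeneous transport equation for $\Delta$, straighten it with a phase correction $\mathcal{C}$, and quote \cite{bib_exp_small} (Proposition 4.22, Theorem 4.23, Corollary 4.24) for the Fourier representation \eqref{eq_formula_diff_outer} and the asymptotics \eqref{eq_known_splitting_inner}; that part is exactly what the paper does, and checking uniformity in $\mu\in B_{\mu_0}$ is indeed the only addition there. The genuine gap is in the one piece of the theorem that is \emph{not} in \cite{bib_exp_small}, namely the sharper bound \eqref{boundsCcal}. You claim that "plugging the bound on $\partial_u(T^{\mathrm{u}}+T^{\mathrm{s}})$ into the integral along characteristics gives both estimates", but it does not: writing $\mathcal{C}=\varepsilon^{-1}\int A$ along characteristics with $A=-\tfrac18\cosh^2(u)\,\partial_u(T^{\mathrm{u}}+T^{\mathrm{s}})$ and the bound $|A|\leq M|\mu|\varepsilon\,|u^2+(\pi/2)^2|^{-1}$ from Theorem \ref{teo_existence_outer}, the $\varepsilon^{-1}$ cancels the $\varepsilon$ in the bound of $A$ and a direct estimate only yields $|\mathcal{C}|=\mathcal{O}(|\mu|)$ (the bound already available in \cite{bib_exp_small}), not $\mathcal{O}(|\mu|\varepsilon\,|u^2+(\pi/2)^2|^{-1})$. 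The extra factor $\varepsilon$ is precisely the new content of Theorem \ref{teo_difference_manifolds}, and it is what later gives $|\mathcal{C}^*|\leq M/\domvar$ at height $\Im(u)=\pm(\pi/2-\domvar\varepsilon)$ and the derivative bound $M|\mu|/(\domvar|u^2+(\pi/2)^2|)$.

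To recover that factor you need the two mechanisms the paper uses in Appendix \ref{sec_appendix_proof_c}: (i) the operator $\G$ inverting $\varepsilon^{-1}\partial_\tau+\partial_u$ harmonic by harmonic, which gains a factor $\varepsilon$ \emph{only} on the harmonics $k\neq 0$ (fast oscillation, item 2 of Lemma \ref{lemma_properties_G_C}); and (ii) a separate treatment of the $\tau$-average, where no oscillatory gain is possible: there one averages the Hamilton--Jacobi equation \eqref{eq_HJ} and uses $g^{[0]}=0$ to get $\partial_uT^{\mathrm{u},\mathrm{s},[0]}=-\tfrac18\cosh^2(u)\bigl((\partial_uT^{\mathrm{u},\mathrm{s}})^2\bigr)^{[0]}$, hence $\|A^{[0]}\|_2\leq M|\mu|^2\varepsilon^2$ (Lemma \ref{lemma_prop_A}), which compensates the $\varepsilon^{-1}$. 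These estimates feed a fixed-point/Neumann argument for $(I-\mathcal{L})\mathcal{C}=\varepsilon^{-1}\G(A)$ with $\|\mathcal{L}\|\leq M/\domvar$ (Lemma \ref{lemma_appendix_l}). Your characteristics construction is fine for producing \emph{some} $\mathcal{C}$ and the representation \eqref{eq_formula_diff_outer}, but as written it cannot deliver \eqref{boundsCcal}; you must either incorporate the averaging/quadratic-average argument into the characteristic integral or switch to the functional-analytic scheme of the appendix.
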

Note that, even though the existence and properties of the function $\mathcal{C}$ are proved in \cite{bib_exp_small}, in our case we can derive the sharper bound \eqref{boundsCcal}, whose prove we leave to \ref{sec_appendix_proof_c}.

From now on we fix $\mu_0,\varepsilon_0,\phi,\varrho\geq \varrho_0$ and $\sigma$ such that Theorems \ref{teo_existence_outer} and \ref{teo_difference_manifolds} hold true. As usual we will omit the dependence on these constants. We emphasize that, since 
$\mathcal{D}^{\mathrm{out}}_{\domvar_1,\phi} \subset \mathcal{D}^{\mathrm{out}}_{\domvar_2,\phi}$ when $\domvar_1\geq \domvar_2$, we can (and we will) take $\domvar_0$ as big as we need in our proofs. 

To finish this section, we define the analytic expansion of $\Delta(u,\tau,\mu)$  and $\Upsilon^{[k]}(\mu)$ around $\mu=0$:
\begin{equation}
\Delta(u,\tau,\mu)=\sum_{j\geq 0}\Delta_j(u,\tau)\cdot \mu^j, \qquad 
 \Upsilon^{[k]}(\mu)= \sum_{j\geq 0} \Upsilon^{[k]}_j \cdot \mu^j,
\label{eq_Taylor_expansion_Delta}
\end{equation}
and the Fourier expansions of $\Delta(u,\tau,\mu)$ and $ \Delta_j(u,\tau)$ :
\begin{equation}\label{eq_Fourier_Deltaj}
\Delta(u,\tau,\mu) = \sum_{k\in \mathbb{Z}} \Delta^{[k]}(u,\mu)\cdot e^{ik\tau},\qquad 
\Delta_j(u,\tau)=\sum_{k\in \mathbb{Z}} \Delta_j^{[k]}(u) \cdot e^{ik \tau}.
\end{equation}

\subsubsection{Heuristics and strategy of the proof}
The aforementioned known splitting formula~\eqref{eq_known_splitting_inner} has the Fourier coefficient  $\chi^{[-1]}(\mu)$ in its leading term. 
In Section \ref{sec_proof_theorem_main_second} we have already analyzed the solutions of the inner equation in order to build insight into the expansion in powers of $\mu$ of $\chi^{[-1]}(\mu)$ and we have concluded in Theorem \ref{teo_main_second} that $\chi^{[-1]}(\mu)= \chi^{[-1]}_n  \mu^n + \mathcal{O}(\mu^{n+1})$. 
This analysis suggests that the leading term of the splitting is of order $\mu^n$. 
However,  taking the first non-vanishing term of $\chi^{[1]}$ as a first term in the asymptotic expression does not make \eqref{eq_known_splitting_inner} a valid asymptotic expression straightforwardly if $n\geq 2$. 
Indeed, in this case, expression \eqref{eq_known_splitting_inner} becomes:

\begin{equation}
\partial_u\Delta(u,\tau,\mu)=
\frac{2e^{-\frac{\pi}{2\varepsilon}}}{\varepsilon^2}\cdot \left[\Im\left(\chi^{[-1]}_n\cdot e^{i\left(\tau-u/\varepsilon\right)}\right)\cdot\mu^n+\mathcal{O}(|\mu|^{(n+1)})+\mathcal{O}\left(\frac{|\mu|}{\log(1/\varepsilon)}\right)\right].
\label{eq_inner_replacing_directly}
\end{equation}
As $n \geq 2$, it is not clear that the main term dominates over the last error term when $\varepsilon, \mu$ are small, 
which invalidates it as a useful asymptotic expression (for instance when $|\mu|\ll |\log \varepsilon|^{-\frac{1}{n-1}}$, as happens in the classical case $\mu=\varepsilon$ or, more generally, $\mu=\mathcal{O}(\varepsilon^m)$, $m>0$).

Our strategy is to prove that the error term in \eqref{eq_inner_replacing_directly} is actually smaller. 
The fact that the leading term of the splitting formula is of order $\mu^n$ does not mean that the splitting function itself does not have terms of order $\mu^j$ for $j<n$; however, we will see that these terms turn out to be much smaller in $\varepsilon$ (in fact, exponentially smaller). 
This idea is simple and we can present it via this example: assume we had a quasiperiodic function, $f$, with an expansion:
\begin{equation}
f(\tau-u/\varepsilon,\mu)=\sum_{j\geq 1}\mu^j\sum_{k\in \mathbb{Z}} A_{j,k}\cdot e^{ik\left(\tau-u/\varepsilon\right)},
\label{eq_example}
\end{equation}
with $A_{j,k}\in\mathbb{C}$. 
Assume $|f|\leq M$ for $u\in\{z\in\mathbb{C},|\mathcal{I}(z)|<a\}$ and $\tau\in\mathbb{T}$. 
In this heuristic example we assume that $\varepsilon $ and $\mu$ are small parameters but $\mu$ is bigger than $e^{-\frac{a}{\varepsilon}}$ (the "natural" setting is $\mu=\mathcal{O}(\varepsilon^m)$, $m>0$; when  $\mu$ is exponentially small in $\varepsilon$, the splitting can be analyzed by classical perturbation theory). 

By using the fact the function $f$ is bounded in a complex strip, we can show that $A_{j,k}=\mathcal{O}(e^{-|k|\frac{a}{\varepsilon}})$.
Obviously, the first exponentially small term of order $e^{-\frac{a}{\varepsilon}}$ is given by the first $\ell $ for which $A_{\ell ,\pm1}\neq 0$, and hence $f\sim \mathcal{O}\left(\mu^\ell\cdot e^{-\frac{a}{\varepsilon}}\right)$ for real values of $u$. 
The terms in $\mu^j$ with $1\leq j<\ell $ are present, but they are of size $\mathcal{O}\left(\mu^j\cdot e^{-|k|\frac{a}{\varepsilon}}\right)$ with $|k|\geq 2$. 
Thus, they are much smaller, and the term $\mu^\ell $ dominates the splitting. 

In order to apply this idea we we split the power expansion of $\Delta$ in \eqref{eq_Taylor_expansion_Delta} as 
\begin{equation}\label{decompositionDelta_heuristics}
\Delta(u,\tau,\mu)= \Delta^{<n}(u,\tau,\mu) + \Delta^{\geq n}(u,\tau,\mu) :=\sum_{0<j<n}\Delta_j(u,\tau)\cdot\mu^j+\sum_{j\geq n}\Delta_j(u,\tau)\cdot\mu^j.
\end{equation}

To prove Theorem \ref{th:maintheorem} we follow the following strategy:
\begin{enumerate} 
\item 
We first prove, in Section \ref{sec_condition_delta_firstharm_0} $\Delta_j^{[\pm 1]}(u)\equiv 0$ if $1\leq j \leq n-1$ and therefore $\Delta^{[\pm 1]}(u,\mu)= \mathcal{O}(\mu^n)$. 
If the splitting distance were given by a formula like \eqref{eq_example}, it would be straightforward to conclude that the terms of lower order $\mathcal{O}(\mu^j)$ with $j<n$ are of higher exponentially small order. 
Although this is not the case, we have a similar formula, given by \ref{teo_difference_manifolds}:
\begin{equation}\label{heuristics_Delta}
\Delta(u,\tau,\mu)=\sum_{k\in \mathbb{Z}}\Upsilon^{[k]}(\mu )\cdot e^{ik\left(u/\varepsilon-\tau+\mathcal{C}(u,\tau,\mu)\right)},
\end{equation}
where $\C(u,\tau,\mu)$ is analytic in all arguments and bounded.

\item 
In Section \ref{sec_error_lower_order_terms} we analyze $\Delta^{<n}$ to establish that, loosely speaking, $\Delta^{<n}(u,\tau,\mu)=\mathcal{O}\left(\mu\cdot e^{-\frac{\pi}{2\varepsilon}\cdot 2}\right)$ (see Proposition \ref{prop_higher_exponential} for details). We work with identity~\eqref{heuristics_Delta}, the strategy being to perform a power series expansion in $\mu$ of $\Delta$, $\mathcal{C}$ and $\Upsilon^{[k]} = \sum_{j\geq 1} \Upsilon^{[k]}_j\cdot \mu^j$ and bound the constants $\Upsilon_j^{[k]}$, $j<n$. 
In Lemma \ref{lemma_bound_upsilon_no_tilde} we bound $\Upsilon_j^{[k]}$ for all $j\geq 1$ and any value of $k \in \mathbb{Z}$, and obtain, roughly speaking, that  
$ \Upsilon^{[k]}_j=\mathcal{O}(e^{-|k|\frac{\pi}{2\varepsilon}})$. In Lemma \ref{lemma_bound_upsilon_k_1} we improve the estimate in the case $k=1$: for $j<n$, we actually have $\Upsilon^{[\pm 1]}_j =\mathcal{O}(e^{-2\frac{\pi}{2\varepsilon}})$. Consequently we obtain, for $j<n$, that $|\Upsilon_j^{[k]}|$ is at least $\mathcal{O}(e^{-2\frac{\pi}{2\varepsilon}})$ for $k\in \mathbb{Z} \backslash \{0\}$.
In Proposition \ref{prop_higher_exponential} we prove that for real values of $u$ and $\tau$ the bounds of the coefficients transfer to $\Delta^{<n}$ and the desired bound is proven.

\item 
In Section \ref{sec_error_estimate_higher_order_terms} we 
analyze $\Delta^{\geq n}$. 
More precisely, the error term $\partial_u\Delta^{\geq n} - \partial_u \delta_0$ 
where $\delta_0$ is defined as:

\begin{equation}\label{eq_reminder_delta_0_heuristics}
\delta_0(u,\tau,\mu)=\frac{2e^{-\frac{\pi}{2\varepsilon}}}{\varepsilon}\cdot \Re \left (\chi^{[-1]}(\mu) \cdot e^{i \left (\tau - u/\varepsilon\right )} \right ).
\end{equation}
We recall that, by Theorem \ref{teo_main_second}, 
$\delta_0 = \mathcal{O}(\mu^n)$. 
Then, by Theorem \ref{teo_difference_manifolds}, we already know that this error term is  $\mathcal{O}\left (\frac{\mu} {\log(1/\varepsilon)\cdot \varepsilon^2}\cdot e^{-\frac{\pi}{2\varepsilon}}\right )$. 
Using an appropriate version of Schwartz lemma for analytic functions, we obtain the extra $\mu^n$ factor in the exponentially small bound of the error.

\end{enumerate}
 
\subsection{Condition for $\Delta_j^{[\pm 1]}=0$}
\label{sec_condition_delta_firstharm_0}
We will derive a condition that ensures certain harmonics of the Taylor coefficients of the solutions to the Hamilton-Jacobi equation  $\partial_uT^{\mathrm{u},\mathrm{s}}$ given by Theorem \ref{teo_existence_outer} are zero. 
We consider their Taylor expansions:
\begin{equation}
\partial_uT^{\mathrm{u},\mathrm{s}}(u,\tau,\mu)=\sum_{j\geq 1}\partial_uT_j^{\mathrm{u},\mathrm{s}}(u,\tau)\cdot\mu^j.
\label{eq_Taylor_expansion_T}
\end{equation}

\begin{lemma}  
 If $k\notin G_j$ (see \eqref{def_Gn}), then the $k$-th harmonic of $T_j^{\mathrm{u},\mathrm{s}}(u,\tau)$ satisfies $(\partial_u{T^{\mathrm{u},\mathrm{s}}_{j})}^{[k]}(u)\equiv 0$. 
 Hence, $\Delta_j^{[k]}(u)=0$. As a consequence, since $1\notin G_j$ for $j=1,\cdots,n-1$, we deduce that $\Delta^{[\pm 1]}(u,\mu)=\mathcal{O}(\mu^n)$. 
\label{lemma_Fourier_coeff_outer}
\end{lemma}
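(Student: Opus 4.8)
The plan is to mirror the proof of Lemma~\ref{lemma_Fourier_coeff_inner}, replacing the inner equation by the outer Hamilton--Jacobi equation~\eqref{eq_HJ}. First I would substitute the Taylor expansion~\eqref{eq_Taylor_expansion_T} (equivalently, write $T^{\mathrm{u},\mathrm{s}}=\sum_{j\ge 1}T^{\mathrm{u},\mathrm{s}}_j\mu^j$) into~\eqref{eq_HJ} and equate powers of $\mu$. This produces the linear transport hierarchy
\[
\frac{1}{\eps}\partial_\tau T_1^{\mathrm{u},\mathrm{s}}+\partial_u T_1^{\mathrm{u},\mathrm{s}}=\frac{2g(\tau)}{\cosh^2(u)},
\qquad
\frac{1}{\eps}\partial_\tau T_j^{\mathrm{u},\mathrm{s}}+\partial_u T_j^{\mathrm{u},\mathrm{s}}=-\frac{1}{8}\cosh^2(u)\sum_{l=1}^{j-1}\partial_u T_l^{\mathrm{u},\mathrm{s}}\cdot\partial_u T_{j-l}^{\mathrm{u},\mathrm{s}}\quad(j\ge 2),
\]
each coefficient inheriting the boundary condition~\eqref{eq_bound_cond} and, via Cauchy estimates in $\mu$ (Lemma~\ref{lemma_bounds_coefficients_powerseries}) applied to the bound of Theorem~\ref{teo_existence_outer}, a bound of the form $|\partial_u T_j^{\mathrm{u},\mathrm{s}}(u,\tau)|\le M_j\,\frac{\eps}{|u^2+(\pi/2)^2|^3}$ on $\mathcal{D}^{\mathrm{out}}_{\domvar,\phi}\times\T_\sigma$.

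Then I would run an induction on $j$. Taking the $k$-th Fourier coefficient in $\tau$ turns each equation into a scalar linear ODE in $u$,
\[
\frac{ik}{\eps}(T_j^{\mathrm{u},\mathrm{s}})^{[k]}(u)+\partial_u(T_j^{\mathrm{u},\mathrm{s}})^{[k]}(u)=R_j^{[k]}(u),
\]
with $R_1^{[k]}=2g^{[k]}/\cosh^2(u)$ and, for $j\ge 2$, $R_j^{[k]}=-\tfrac18\cosh^2(u)\sum_{l=1}^{j-1}\sum_{m\in\mathbb{Z}}\partial_u(T_l^{\mathrm{u},\mathrm{s}})^{[m]}\partial_u(T_{j-l}^{\mathrm{u},\mathrm{s}})^{[k-m]}$. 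If $k\notin G_1$ then $g^{[k]}=0$ and $R_1^{[k]}\equiv 0$; if $k\notin G_j$ then, by the induction hypothesis, every surviving term of $R_j^{[k]}$ would need $m\in G_l$ and $k-m\in G_{j-l}$, forcing $k=m+(k-m)\in G_j$ by the additive definition~\eqref{def_Gn} --- a contradiction --- so $R_j^{[k]}\equiv 0$. Hence, in all these cases $(T_j^{\mathrm{u},\mathrm{s}})^{[k]}$ solves the homogeneous equation, i.e.\ $(T_j^{\mathrm{u},\mathrm{s}})^{[k]}(u)=C\,e^{-iku/\eps}$ for a constant $C$. For $k=0$ this already gives $\partial_u(T_j^{\mathrm{u},\mathrm{s}})^{[0]}\equiv 0$. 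For $k\neq 0$ I would kill the constant using~\eqref{eq_bound_cond}: on the domain of $T^{\mathrm{u}}$ reaching $\Re(u)\to-\infty$ (resp.\ of $T^{\mathrm{s}}$ reaching $\Re(u)\to+\infty$) one has $|\cosh^2(u)\,\partial_u(T_j^{\mathrm{u},\mathrm{s}})^{[k]}(u)|=\tfrac{|k|}{\eps}|C|\,|\cosh^2(u)|\,|e^{-iku/\eps}|$, where $|\cosh^2(u)|$ grows exponentially while $|e^{-iku/\eps}|$ stays bounded along horizontal directions; comparison with~\eqref{eq_bound_cond} forces $C=0$, so $(T_j^{\mathrm{u},\mathrm{s}})^{[k]}\equiv 0$ and hence $\Delta_j^{[k]}=(T_j^{\mathrm{u}})^{[k]}-(T_j^{\mathrm{s}})^{[k]}\equiv 0$.

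I expect the only delicate point to be this last step, since the common domain $\mathcal{D}^{\mathrm{out}}_{\domvar,\phi}$ on which Theorem~\ref{teo_existence_outer} is stated is bounded and does not reach $\Re(u)=\pm\infty$, so~\eqref{eq_bound_cond} cannot be invoked there directly; one must recall that $T^{\mathrm{u}}$ (resp.\ $T^{\mathrm{s}}$) is in fact defined on the larger domain on which~\eqref{eq_bound_cond} is imposed and exponential decay holds, carry out the above comparison there, and transfer the vanishing back to $\mathcal{D}^{\mathrm{out}}_{\domvar,\phi}$ by analytic continuation. Granting this, the conclusion of the lemma is immediate: by definition of $n=n(g)$ in~\eqref{eq_definition_n} we have $1\notin G_j$ for $j=1,\dots,n-1$, and the $G_j$ are symmetric ($g$ being real analytic), so also $-1\notin G_j$; thus $\Delta_j^{[\pm 1]}\equiv 0$ for $j=1,\dots,n-1$, and therefore, from~\eqref{eq_Taylor_expansion_Delta}--\eqref{eq_Fourier_Deltaj}, $\Delta^{[\pm 1]}(u,\mu)=\sum_{j\ge n}\Delta_j^{[\pm 1]}(u)\,\mu^j=\mathcal{O}(\mu^n)$.
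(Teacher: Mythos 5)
Your proposal is correct and follows essentially the same route as the paper's proof: expand the Hamilton--Jacobi equation \eqref{eq_HJ} in powers of $\mu$ and in Fourier series in $\tau$, induct on $j$ using the additive structure of the sets $G_j$ in \eqref{def_Gn}, and discard the homogeneous solutions $C\,e^{-iku/\eps}$ by means of the boundary/decay condition \eqref{eq_bound_cond}, which the paper encodes as the bound $|\partial_u T_j^{\mathrm{u},\mathrm{s}}(u,\tau)|\leq M e^{-2|\Re(u)|}$ obtained via Lemma \ref{lemma_bounds_coefficients_powerseries}. Your remark about invoking the decay on the larger (unbounded) domains where $T^{\mathrm{u}}$ and $T^{\mathrm{s}}$ are separately defined, rather than on the common domain $\mathcal{D}^{\mathrm{out}}_{\domvar,\phi}$, is exactly the implicit step the paper relies on, so there is no gap.
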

\begin{proof}
First notice that, by \eqref{eq_bound_cond}, we know that as $\Re(u)\to\pm\infty$ the functions  $\partial_uT^{\mathrm{u},\mathrm{s}}$ satisfy:
$$
|\partial_uT^{\mathrm{u},\mathrm{s}}(u,\tau,\mu)|\leq M\cdot e^{-2|\Re(u)|}.
$$
Using Lemma \ref{lemma_bounds_coefficients_powerseries}, we can state:
\begin{equation}
|\partial_uT_{j}^{\mathrm{u},\mathrm{s}}(u,\tau)|\leq M\cdot e^{-2|\Re(u)|},
\label{eq_init_cond_Fourier_mu}
\end{equation}
where $M$ depends on $j$.

Expanding the Hamilton-Jacobi equation \eqref{eq_HJ} in powers of $\mu$ we obtain for $j=1$

\begin{equation}
\frac{1}{\varepsilon}\partial_\tau T_{1}^{\mathrm{u},\mathrm{s}}(u,\tau)+\partial_uT_{1}^{\mathrm{u},\mathrm{s}}(u,\tau)=2\frac{g(\tau)}{\cosh^2(u)}
\label{eq_outer_mu_1}
\end{equation}
and, for $j>1$,
\begin{equation}
\frac{1}{\varepsilon}\partial_{\tau}T^{\mathrm{u},\mathrm{s}}_{j}(u,\tau)+\partial_uT^{\mathrm{u},\mathrm{s}}_{j}(u,\tau)=-\frac{1}{8}\cosh(u)^2\sum_{l=1}^{j-1}\partial_uT^{\mathrm{u},\mathrm{s}}_{l}(u,\tau)\cdot\partial_uT^{\mathrm{u},\mathrm{s}}_{j-l}(u,\tau),
\label{eq_outer_mu_2}
\end{equation}
with boundary condition (see \eqref{eq_init_cond_Fourier_mu})
\begin{equation}
|\partial_uT_j^{\mathrm{u},\mathrm{s}}(u,\tau)|\leq M e^{-2|\Re(u)|}.
\label{eq_boundary_condition_Fourier}
\end{equation}
We proceed by induction. 
We only deal with the unstable case as the stable case is analogous. 

Consider $j=1$. 
Expanding equation \eqref{eq_outer_mu_1} in Fourier series we obtain:

$$
\frac{ik}{\varepsilon}T_{1}^{\mathrm{u},[k]}(u)+\partial_uT_{1}^{\mathrm{u},[k]}(u)=\frac{2}{\cosh^2(u)}g^{[k]},\qquad k\in\mathbb{Z}.
$$
When $g^{[k]}=0$, the only solution is $T_{1}^{\mathrm{u},[k]}(u)=C\cdot e^{- iku/\varepsilon}$ but, since we impose \eqref{eq_boundary_condition_Fourier}, necessarily $C=0$. 
Thus, if $k\notin G_1$, $\partial_uT^{\mathrm{u},[k]}_{1}(u)=0$.

Now consider $j>1$ and assume that for $\nu=1,\dots,j-1$ if $\ell\notin G_\nu$, $\partial_u T_\nu^{\mathrm{u},[\ell]}=0$. 
Expanding \eqref{eq_outer_mu_2} in Fourier series:
$$
\frac{ik}{\varepsilon}T_{j}^{\mathrm{u},[k]}(u)+\partial_uT^{\mathrm{u},[k]}_{j}(u)=-\frac{1}{8}\cosh^2(u)\sum_{l=1}^{j-1}\sum_{m\in\mathbb{Z}}\partial_uT_{l}^{\mathrm{u},[m]}(u)\cdot\partial_uT_{j-l}^{\mathrm{u},[k-m]}(u).
$$
The non-zero terms on the right-hand side are those where $m\in G_l$ and $k-m\in G_{j-l}$. 
This means $k=m+(k-m)=m_1+\dots+m_l+m'_1+\dots+m'_{j-l}$ with $m_i,m'_i\in G_1$. 
This means $k\in G_j$. 
Therefore, if $k\notin G_j$ the right-hand side of the equation is $0$ and, imposing \eqref{eq_boundary_condition_Fourier}, $\partial_uT_{j}^{\mathrm{u},[k]}(u)=0$.
\end{proof}

\begin{remark} 
We emphasize that, using Lemma \ref{lemma_Fourier_coeff_outer}, we are also able to control the order in $\mu$ of the other harmonics of $\Delta^{[k]}(u,\mu)$. Indeed, if we define
$$
n_k(g):=\min\{ \ell \in \mathbb{N} : k \in G_\ell\},
$$
then $\Delta^{[k]}(u,\mu)= \mathcal{O}(\mu^{n_k})$. This fact could be useful for a further analysis in the degenerate case $\chi_n^{[-1]}=0$ but it is out of the scope of this work. 
\end{remark}

\subsection{Analysis of $\Delta^{<n}$}
\label{sec_error_lower_order_terms}

We consider the function $\Delta^{<n}$ defined by~\eqref{decompositionDelta_heuristics}:
$$
\Delta^{<n}(u,\tau,\mu)= \sum_{j<n} \Delta_j(u,\tau) \cdot \mu^j
$$
We prove the following proposition:
\begin{proposition}
For $j=1,\dots,n-1$ and $u\in\mathcal{D}_{\domvar,\phi}^{\mathrm{out}}\cap\mathbb{R}$, $\tau\in\mathbb{T}$ we have that:
$$
|\partial_u\Delta_j(u,\tau)|\leq M\cdot  \frac{e^{-2(\frac{\pi}{2\varepsilon}-2\domvar)}}{\varepsilon^2\domvar^4}.
$$
As a consequence $$|\partial_u \Delta^{<n}(u,\tau, \mu) | \leq M\cdot \frac{e^{-2(\frac{\pi}{2\varepsilon}-2\domvar)}}{\varepsilon^2\domvar^4}.$$
\label{prop_higher_exponential}
\end{proposition}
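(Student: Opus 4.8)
We may assume $n\ge 2$, the sum defining $\Delta^{<n}$ being empty otherwise. The plan is to combine the representation~\eqref{eq_formula_diff_outer}, $\Delta(u,\tau,\mu)=\sum_k\Upsilon^{[k]}(\mu)e^{ik(u/\eps-\tau+\C(u,\tau,\mu))}$, with the vanishing statement of Lemma~\ref{lemma_Fourier_coeff_outer}, namely $\Delta_j^{[\pm1]}(u)\equiv0$ for $1\le j\le n-1$. Throughout, $M$ is a constant independent of $\mu,\eps$ which may depend on the fixed index $j\le n-1$; I write $\Upsilon^{[k]}(\mu)=\sum_{i\ge1}\Upsilon^{[k]}_i\mu^i$ (the series starts at $i=1$ since $\Delta\equiv0$ at $\mu=0$), $\C=\sum_{l\ge1}\C_l\mu^l$ and $e^{ik\C}=\sum_{m\ge0}\frac{(ik)^m}{m!}\C^m$, and I use the Fourier series in $\tau$ of all these coefficients. \textbf{Step 1} is a crude bound on $\Upsilon^{[k]}_j$: for $k>0$ I would evaluate~\eqref{eq_formula_diff_outer} along $\Im(u)=\tfrac{\pi}{2}-\domvar\eps$ and for $k<0$ along $\Im(u)=-(\tfrac{\pi}{2}-\domvar\eps)$, i.e.\ near the vertices of $\mathcal{D}_{\domvar,\phi}^{\mathrm{out}}$ closest to $\pm i\pi/2$; there $|u^2+(\pi/2)^2|$ is of order $\domvar\eps$, so by~\eqref{boundsCcal} the phase shift $\C$ is small and the near-identity substitution $\tau\mapsto\tau-\C(u,\tau,\mu)$ (equivalently, expanding $e^{ik\C}$ and inverting order by order in $\mu$) recovers $\Upsilon^{[k]}(\mu)$ from a $\tau$-Fourier coefficient of $\Delta$. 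Using $|\Delta|\le M|\mu|\eps/|u^2+(\pi/2)^2|\le M|\mu|/\domvar$ at those points and $|e^{iku/\eps}|=e^{-|k|(\pi/(2\eps)-\domvar)}$ gives $|\Upsilon^{[k]}(\mu)|\le M|\mu|\domvar^{-1}e^{-|k|(\pi/(2\eps)-\domvar)}$, whence, by Cauchy estimates in $\mu$ (Lemma~\ref{lemma_bounds_coefficients_powerseries}), $|\Upsilon^{[k]}_j|\le M\domvar^{-1}e^{-|k|(\pi/(2\eps)-\domvar)}$ for all $j\ge1$ and $k\in\mathbb{Z}$.

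\textbf{Step 2} is the improved bound for $k=\pm1$ when $j<n$, and this is where $\Delta_j^{[\pm1]}\equiv0$ enters. Substituting the expansions above into~\eqref{eq_formula_diff_outer} and collecting the coefficient of $\mu^j e^{\mp i\tau}$, the identity $\Delta_j^{[\pm1]}(u)=0$ isolates $\Upsilon^{[\pm1]}_j$ — the unique contribution with no factor $\C_l$ — against a sum of correction terms, each of which carries at least one factor $\C_l$ and involves only coefficients $\Upsilon^{[k]}_i$ with $i<j$. Evaluating this identity at the appropriate vertex of $\mathcal{D}_{\domvar,\phi}^{\mathrm{out}}$ and estimating the right-hand side: the corrections with $|k|\ge2$ are controlled by Step~1 together with the extra smallness $|\C_l|\le M/\domvar$ and the exponential decay in $|k|$ of the Fourier coefficients of products of $\C$ (which makes the $k$-series converge), while the corrections carrying a $\pm1$ harmonic and an index $i<j$ are handled by induction on $j$. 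This gives, for $1\le j\le n-1$, a bound of the form $|\Upsilon^{[\pm1]}_j|\le M\domvar^{-2}e^{-2(\pi/(2\eps)-2\domvar)}$; combined with Step~1 we conclude, for $1\le j\le n-1$ and all $k\in\mathbb{Z}\setminus\{0\}$, that $|\Upsilon^{[k]}_j|\le M\domvar^{-1}e^{-|k|(\pi/(2\eps)-\domvar)}$, in particular $|\Upsilon^{[k]}_j|\le M\domvar^{-1}e^{-2(\pi/(2\eps)-2\domvar)}$.

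\textbf{Step 3} transfers this to real $u$. Fix $u\in\mathcal{D}_{\domvar,\phi}^{\mathrm{out}}\cap\mathbb{R}$ and $\tau\in\mathbb{T}$; then $u^2+(\pi/2)^2\ge(\pi/2)^2$, so~\eqref{boundsCcal} and Cauchy in $\mu$ yield $|\C_l(u,\tau)|\le M\eps$ and $|\partial_u\C_l(u,\tau)|\le M\domvar^{-1}$. Writing $\Delta_j(u,\tau)=\sum_k e^{ik(u/\eps-\tau)}F_{k,j}(u,\tau)$ with $F_{k,j}=\sum_{i+l_1+\dots+l_m=j,\,i\ge1,\,l_s\ge1}\Upsilon^{[k]}_i\frac{(ik)^m}{m!}\prod_s\C_{l_s}$ a finite sum, the $k=0$ term is constant in $u$ and drops, so $\partial_u\Delta_j=\sum_{k\ne0}\partial_u\big(e^{ik(u/\eps-\tau)}F_{k,j}\big)$. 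Steps~1--2 give $|F_{k,j}|+|\partial_uF_{k,j}|\le M\domvar^{-1}e^{-|k|(\pi/(2\eps)-\domvar)}(1+|k|)^{n-2}$ for $|k|\ge2$ and $|F_{\pm1,j}|+|\partial_uF_{\pm1,j}|\le M\domvar^{-2}e^{-2(\pi/(2\eps)-2\domvar)}$; since differentiating $e^{iku/\eps}$ produces a further factor $|k|/\eps$, a direct estimate of the $k$-series — dominated by $|k|=1$ and $|k|=2$ — together with $\domvar\eps<1$ yields $|\partial_u\Delta_j(u,\tau)|\le M\eps^{-2}\domvar^{-4}e^{-2(\pi/(2\eps)-2\domvar)}$. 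Summing $|\mu|^j$ over $j=1,\dots,n-1$ gives the stated bound for $\partial_u\Delta^{<n}$.

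The genuinely delicate point is Step~2. In contrast with Section~\ref{sec_proof_theorem_main_second}, where $\Delta_{\mathrm{in},j}^{[1]}=0$ is read off directly, here $\Upsilon^{[\pm1]}$ is \emph{not} the harmonic $\Delta^{[\pm1]}$: the phase shift by $\C$ in~\eqref{eq_formula_diff_outer} couples all Fourier modes and all powers of $\mu$. One must therefore invert this relation order by order in $\mu$ and show that the corrections — which pair the merely single-exponentially small high harmonics $\Upsilon^{[k]}_i$, $|k|\ge2$, with oscillatory factors $e^{i(k\mp1)u/\eps}$ that blow up on the boundary of $\mathcal{D}_{\domvar,\phi}^{\mathrm{out}}$ — nevertheless combine into a second-order exponentially small quantity; the required gain comes from the systematic presence of a factor $\C_l$ in every correction and from the inductive control of the lower-order $\pm1$ harmonics.
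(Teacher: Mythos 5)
Your overall architecture is the same as the paper's: a crude bound on all $\Upsilon^{[k]}_j$ obtained near the vertices of $\mathcal{D}^{\mathrm{out}}_{\domvar,\phi}$ plus Cauchy estimates in $\mu$ (the paper's Lemma \ref{lemma_bound_upsilon_no_tilde}), then an improved bound on $\Upsilon^{[\pm1]}_j$ for $j<n$ extracted from $\Delta_j^{[\pm1]}\equiv0$ by induction (Lemma \ref{lemma_bound_upsilon_k_1}), and finally transfer to real $u$ and a Cauchy estimate in $u$. However, your Step 2 fails as written because of the choice of evaluation point. From \eqref{eq_formula_Fourier_coeff} with $k=1$ and $\Delta_j^{[1]}(u)=0$ one gets $\Upsilon_j^{[-1]}=-\sum_{m\neq0}e^{i(m+1)u/\eps}\bigl[\Upsilon^{[m]}_{j-1}\widetilde{\mathcal{C}}^{[m+1]}_{1,m}(u)+\dots+\Upsilon^{[m]}_{1}\widetilde{\mathcal{C}}^{[m+1]}_{j-1,m}(u)\bigr]$. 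If you evaluate this at a vertex, say $u^*=-i\left(\frac{\pi}{2}-\domvar\eps\right)$, then for $m\geq1$ the factor $|e^{i(m+1)u^*/\eps}|=e^{(m+1)(\frac{\pi}{2\eps}-\domvar)}$ exactly cancels (and exceeds by one unit) the decay $e^{-m(\frac{\pi}{2\eps}-\domvar)}$ of $\Upsilon^{[m]}_i$ supplied by Step 1, so those correction terms are only bounded by a quantity of order $e^{+(\frac{\pi}{2\eps}-\domvar)}$; the factors $\C_l=\mathcal{O}(1/\domvar)$ and the $\tau$-Fourier decay of $\widetilde{\mathcal{C}}_{\ell,m}$ give only polynomial or $e^{-\mathcal{O}(|m|)\sigma}$ gains and cannot repair an $e^{\pi/(2\eps)}$ loss. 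Hence no bound of type $e^{-2\frac{\pi}{2\eps}}$ on $\Upsilon^{[\pm1]}_j$ can be extracted this way (choosing the other vertex just moves the problem to $m\leq-1$). The paper's proof evaluates the identity at the \emph{real} point $u=0$, where every $e^{imu/\eps}$ has modulus one: then each term with $|m|\geq2$ retains the full factor $e^{-|m|(\frac{\pi}{2\eps}-\domvar-M/\domvar)}\leq e^{-2(\frac{\pi}{2\eps}-\domvar-M/\domvar)}$, and the terms with $m=\pm1$ are absorbed by the induction hypothesis. This is the one missing idea; with a real evaluation point your Step 2 becomes the paper's argument.

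Two secondary remarks. In Step 1 you bound $\Upsilon^{[k]}(\mu)$ using a sup bound on $\Delta$ itself near the vertices, but no such bound is among the quoted results (Theorem \ref{teo_existence_outer} controls only $\partial_uT^{\mathrm{u},\mathrm{s}}$); the paper circumvents this by bounding $\partial_u\Delta\leq M/(\eps^2\domvar^3)$, straightening the phase with $w=u+\eps\C(u,\tau,\mu)$, and reading off the coefficient $\frac{ik}{\eps}\Upsilon^{[k]}(\mu)e^{ikw/\eps}$ of the differentiated Fourier series — the extra $k/\eps$ is also what produces the prefactor $\frac{1}{\eps\domvar^3}$ used later. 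Moreover, the paper keeps the factor $e^{-|k|\sigma}$ (analyticity in $\tau$) in the bound for $\Upsilon^{[k]}_j$, which is what makes the various $k$-series (with $|k|^{\nu}$ growth and evaluation on complex strips) absolutely convergent in the intermediate rearrangement (Lemma \ref{lem:explicit_formula_Deltajk}); at real $u$, as in your Step 3, this is less critical, but you should carry it along for Step 2. Steps 1 and 3 are otherwise essentially the paper's.
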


In order to prove Proposition \ref{prop_higher_exponential}, we first recall that, by Theorem \ref{teo_difference_manifolds},
\begin{equation}\label{doubleexpr_Delta}
\Delta(u,\tau,\mu)=\sum_{k\in \mathbb{Z}} \Upsilon^{[k]}(\mu)\cdot e^{i k (u/\varepsilon - \tau+ \mathcal{C}(u,\tau,\mu))} = \sum_{j\geq 0} \Delta_j(u,\tau) \cdot \mu^j
\end{equation}
with $\mathcal{C}(u,\tau,\mu)$ having the Taylor expansion $\mathcal{C}(u,\tau,\mu)= \sum_{j\geq 0 } \mathcal{C}_j (u,\tau) \cdot \mu^j$. We split the proof in three parts. 
First, in Lemma \ref{lemma_bound_upsilon_no_tilde}
we provide an exponentially small bound for $\Upsilon^{[k]}$. Then, in Lemma \ref{lem:explicit_formula_Deltajk} we express $\Delta^{[k]}_j$ (the $k$-Fourier coefficient of $\Delta_j$) in terms of $\Upsilon^{[m]}_l$ and $\mathcal{C}_\ell$. Finally, in Lemma \ref{lemma_bound_upsilon_k_1}, using Lemma \ref{lemma_Fourier_coeff_outer} too, we provide an improved bound for $\Upsilon^{[\pm 1]}_j$, $j=1,\cdots, n-1$. 
This allows us to finish the proof of Proposition \ref{prop_higher_exponential}.

\begin{lemma}
Take $\nu\in \mathbb{N}$. 
There exists a constant $M$ such that
for any $j=1,\cdots, \nu$ and $k\in \mathbb{Z}$, the Taylor coefficients $\Upsilon_j^{[k]}$ satisfy 
\begin{equation}
    |\Upsilon^{[k]}_j|\leq M\cdot\frac{e^{-|k|(\frac{\pi}{2\varepsilon}-\domvar-M/\domvar)}}{\varepsilon\domvar^3}\cdot e^{-|k|\sigma}.
    \label{eq_bound_upsilon}
\end{equation}
\label{lemma_bound_upsilon_no_tilde}
\end{lemma}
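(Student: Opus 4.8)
The plan is to prove \eqref{eq_bound_upsilon} in two stages: first a bound on $\Upsilon^{[k]}(\mu)$ that is uniform for $\mu\in B_{\mu_0}$, and then the estimate on the Taylor coefficients $\Upsilon^{[k]}_j$, $j\le\nu$, via the Cauchy estimates of Lemma \ref{lemma_bounds_coefficients_powerseries}. Throughout $M$ denotes a generic constant depending only on the fixed quantities (in particular on $\mu_0$ and, in the last step, on $\nu$), never on $\varepsilon,\domvar,k$.

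For the uniform bound on $\Upsilon^{[k]}(\mu)$ I would exploit that, by Theorem \ref{teo_difference_manifolds}, $\Delta$ is a function of the single variable $\zeta=u/\varepsilon-\tau+\mathcal{C}(u,\tau,\mu)$: writing $\Delta(u,\tau,\mu)=\Phi(\zeta;\mu)$ with $\Phi(\zeta;\mu)=\sum_{k}\Upsilon^{[k]}(\mu)e^{ik\zeta}$, and using that $|\partial_u\mathcal{C}|$ and $|\partial_\tau\mathcal{C}|$ are small on $\mathcal{D}^{\mathrm{out}}_{\domvar,\phi}\times\T_\sigma$ by \eqref{boundsCcal} (the $\tau$-derivative via Cauchy's estimate), for each fixed admissible $u$ the map $\tau\mapsto\zeta$ is a holomorphic change of variables wrapping once around the cylinder as $\Re\tau$ runs over a period, with $\partial_u\zeta=\tfrac1\varepsilon+\partial_u\mathcal{C}\ne0$. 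Differentiating $\Delta=\Phi\circ\zeta$ in $u$ gives $\Phi'(\zeta;\mu)=\partial_u\Delta/(\tfrac1\varepsilon+\partial_u\mathcal{C})$, so comparing Fourier coefficients in $\zeta$ and substituting $\zeta=\zeta(u,\tau,\mu)$,
\[
ik\,\Upsilon^{[k]}(\mu)=\pm\frac{1}{2\pi}\int_{\tau_0}^{\tau_0+2\pi}\frac{\partial_u\Delta(u,\tau,\mu)}{\tfrac1\varepsilon+\partial_u\mathcal{C}(u,\tau,\mu)}\,e^{-ik(u/\varepsilon-\tau+\mathcal{C}(u,\tau,\mu))}\bigl(1-\partial_\tau\mathcal{C}(u,\tau,\mu)\bigr)\,d\tau ,
\]
a formula that does not depend on the admissible $u$ nor on $\tau_0$ since $\Phi'$ is a function of $\zeta$ alone (the case $k=0$ is immediate). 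Then I would push the contour as deep as possible into the complex plane: for $k>0$, take $u=u_*$ with $\Re u_*=0$ and $\Im u_*=-(1-\delta)(\tfrac\pi2-\domvar\varepsilon)$ for small $\delta>0$, and integrate over $\tau\in\tau_0+[0,2\pi]+i\sigma'$ with $\sigma<\sigma'<\sigma_0$ fixed (shrinking $\sigma$ if necessary so that $\Delta$ and $\mathcal{C}$ stay analytic on $\T_{\sigma'}$); the case $k<0$ is symmetric with the singularity $+i\tfrac\pi2$ and $\Im\tau=-\sigma'$. Since $|u_*^2+(\pi/2)^2|\ge c\,\domvar\varepsilon$, Theorems \ref{teo_existence_outer}--\ref{teo_difference_manifolds} give $|\partial_u\Delta(u_*,\tau,\mu)|\le M|\mu|/(\domvar^3\varepsilon^2)$, $|\tfrac1\varepsilon+\partial_u\mathcal{C}|^{-1}\le2\varepsilon$ (as $|\partial_u\mathcal{C}|\le M|\mu|/(\domvar|u_*^2+(\pi/2)^2|)\le\tfrac1{2\varepsilon}$ for $\domvar\ge\domvar_0$ large), $|1-\partial_\tau\mathcal{C}|\le M$ by Cauchy in $\tau$ at the fixed distance $\sigma_0-\sigma'$ from the boundary, and $|\mathcal{C}(u_*,\tau,\mu)|\le M/\domvar$; hence $|e^{-ik(u_*/\varepsilon-\tau+\mathcal{C})}|\le e^{-k(1-\delta)(\frac{\pi}{2\varepsilon}-\domvar)}e^{-k\sigma'}e^{kM/\domvar}$.

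Substituting these bounds into the integral formula, bounding the contour length by $2\pi$, letting $\delta\to0^+$ (admissibly, independently of $k,\varepsilon,\domvar$) and using $\sigma'>\sigma$ to turn $e^{-|k|\sigma'}$ into $e^{-|k|\sigma}$, I would conclude that for all $\mu\in B_{\mu_0}$ and $k\ne0$
\[
|\Upsilon^{[k]}(\mu)|\le \frac{M}{|k|}\,\frac{|\mu|}{\domvar^3\varepsilon}\,e^{-|k|\left(\frac{\pi}{2\varepsilon}-\domvar-M/\domvar\right)}e^{-|k|\sigma}\le M\,\frac{|\mu|}{\domvar^3\varepsilon}\,e^{-|k|\left(\frac{\pi}{2\varepsilon}-\domvar-M/\domvar\right)}e^{-|k|\sigma}.
\]
Finally, since each $\Upsilon^{[k]}(\mu)$ is analytic on $B_{\mu_0}$ and this bound is uniform there, Lemma \ref{lemma_bounds_coefficients_powerseries} (applied with the given $\nu$ and a radius slightly below $\mu_0$) transfers it to the coefficients: for $j=1,\dots,\nu$,
\[
|\Upsilon^{[k]}_j|\le M\,\frac{e^{-|k|\left(\frac{\pi}{2\varepsilon}-\domvar-M/\domvar\right)}}{\varepsilon\,\domvar^3}\,e^{-|k|\sigma},
\]
which is \eqref{eq_bound_upsilon}.

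I expect the main obstacle to be making the reduction in the second paragraph rigorous: one must check that, uniformly for $u$ arbitrarily close to the singularities $\pm i\tfrac\pi2$ inside $\mathcal{D}^{\mathrm{out}}_{\domvar,\phi}$ and for $\mu\in B_{\mu_0}$, the change of variables $\tau\mapsto\zeta$ is well defined and univalent, that the integral formula holds, and that its contour may be deformed up to imaginary parts of $\zeta$ as large as $\tfrac{\pi}{2\varepsilon}-\domvar+M/\domvar+\sigma'$ without leaving the domain of analyticity of $\Delta$. This is precisely where the sharpened estimate \eqref{boundsCcal} for $\mathcal{C}$ is used (it provides, through Cauchy, the required smallness of $\partial_u\mathcal{C}$ and $\partial_\tau\mathcal{C}$); once that is in place, everything else is routine bookkeeping of the near-singularity bounds of Theorems \ref{teo_existence_outer}--\ref{teo_difference_manifolds}.
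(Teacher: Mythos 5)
Your proposal is correct in substance and follows the same overall strategy as the paper: obtain a bound on $\Upsilon^{[k]}(\mu)$ uniform in $\mu\in B_{\mu_0}$ by exploiting the representation $\Delta=\sum_k\Upsilon^{[k]}(\mu)e^{ik(u/\varepsilon-\tau+\mathcal{C})}$, the near-singularity bound $|\partial_u\Delta|\leq M|\mu|/(\varepsilon^2\domvar^3)$, the smallness $|\mathcal{C}|\leq M/\domvar$, evaluation at points with $\Im u$ close to $\mp(\pi/2-\domvar\varepsilon)$, and analyticity in $\tau$ in a strip of width $\sigma$; then transfer to the Taylor coefficients via Lemma \ref{lemma_bounds_coefficients_powerseries}. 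Where you differ is in the mechanics of extracting the $k$-th coefficient: you work at fixed $u$ and write an explicit contour integral in $\tau$ (equivalently in $\zeta=u/\varepsilon-\tau+\mathcal{C}$), which forces you to control the Jacobian factors $(\tfrac1\varepsilon+\partial_u\mathcal{C})^{-1}$ and $(1-\partial_\tau\mathcal{C})$, to check univalence of $\tau\mapsto\zeta$ and that the deformed contour stays in the image domain — exactly the obligations you flag at the end, and note that $\partial_\tau\mathcal{C}$ is not bounded in Theorem \ref{teo_difference_manifolds}, so you must shrink the $\tau$-strip and invoke a Cauchy estimate there. The paper instead straightens only the $u$-variable, setting $w=u+\varepsilon\,\mathcal{C}(u,\tau,\mu)$ at fixed $\tau$: injectivity is immediate from $\partial_u w=1+\mathcal{O}(\mu/\domvar)$ (using \eqref{boundsCcal}), the straightened function $\widetilde\Delta(w,\tau)=\sum_k\Upsilon^{[k]}e^{ik(w/\varepsilon-\tau)}$ is a genuine Fourier series in $\tau$, and the standard strip-analyticity decay plus evaluation at $w^*=-i(\pi/2-\varepsilon\domvar)+\varepsilon\mathcal{C}^*$ gives the bound with no Jacobians, no $\partial_\tau\mathcal{C}$, and no contour deformation. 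Your route, once those checks are completed, yields a marginally sharper estimate (an extra factor $|\mu|/|k|$), but the paper's substitution is the lighter way to the same inequality; neither argument, incidentally, treats $k=0$ explicitly, so you are not worse off there.
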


\begin{proof}
By Theorem \ref{teo_existence_outer}, if $(u,\tau)\in\mathcal{D}_{\domvar,\phi}^{\mathrm{out}}\times\mathbb{T}_\sigma$
$$
|
\partial_u\Delta(u,\tau,\mu)
|\leq |\partial_uT^{\mathrm{out},\mathrm{s}}(u,\tau,\mu)|+|\partial_uT^{\mathrm{out},\mathrm{u}}(u,\tau,\mu)|\leq  \frac{M}{\varepsilon^2\domvar^3}.
$$
We consider the change of variables $(w,\tau)=h(u,\tau)=(u+\varepsilon\cdot\C(u,\tau),\tau)
$. 
It is clearly well defined and injective, as
$$
\partial_u w(u,\tau)=1+\mathcal{O}\left(\frac{\mu}{\domvar}\right).
$$
We introduce 
$$
\widetilde{\Delta}(w,\tau,\mu)=\Delta(h^{-1}(w,\tau),\mu)=\sum_{k\in\mathbb{Z}}\Upsilon^{[k]}(\mu)\cdot e^{ik\left(\frac{w}{\varepsilon}-\tau\right)}
$$
and we have that:
\begin{equation}
|\partial_w\widetilde{\Delta}(w,\tau,\mu)|\leq\frac{M}{\varepsilon^2\domvar^3}.
\label{eq_bound_w}
\end{equation}
From \eqref{eq_bound_w} and using that $\partial_w\widetilde{\Delta}(w,\tau,\mu)$ is analytic in $\tau$ in a strip of width $\sigma$ we bound each Fourier coefficient:
$$
\left|\frac{ik}{\varepsilon}\Upsilon^{[k]}(\mu)\cdot e^{ikw/\varepsilon}\right|\leq M\cdot\frac{e^{-|k|\sigma}}{\varepsilon^2\domvar^3}.
$$
We use this inequality to obtain bounds for $\Upsilon^{[k]}$. 
We first take $k>0$ and we consider the point $u^*=-i(\pi/2-\varepsilon\domvar)$ and $\C^*=\C(u^*,\tau)$. We particularize the previous inequality —valid for all $w$— for the value $w^*=-i(\pi/2-\varepsilon\domvar)+\varepsilon\cdot\C^*$:
$$
\left|\frac{ik}{\varepsilon}\Upsilon^{[k]}(\mu,\varepsilon)\cdot e^{|k|(\frac{\pi}{2\varepsilon}-\domvar+i\C^*)}\right|\leq  M\cdot\frac{e^{-|k|\sigma}}{\varepsilon^2\domvar^3}.
$$
As $|\C^*|\leq  \frac{M}{\domvar}$,
$$|\Upsilon^{[k]}(\mu)|\leq M\cdot\frac{e^{-|k|(\frac{\pi}{2\varepsilon}-\domvar-M/\domvar)}}{\varepsilon\domvar^3}\cdot e^{-|k|\sigma}
$$
and, by Lemma \ref{lemma_bounds_coefficients_powerseries}, we get the result. 
For $k<0$ we argue analogously with $u^*=i(\pi/2-\varepsilon\domvar)$.
\end{proof}
In the following lemma we find an explicit formula for $\Delta_j^{[k]}(u)$.  

\begin{lemma}  \label{lem:explicit_formula_Deltajk} 
Take $j\geq 0$, $k\in \mathbb{Z}$. 
Then, for all $u\in \mathcal{D}^{\mathrm{out}}_{\domvar,\phi}$
the $k$-Fourier coefficient of $\Delta_j$ can be expressed as
\begin{equation}
    \Delta_j^{[k]}(u)=e^{-ik\frac{u}{\varepsilon}}\Upsilon_j^{[-k]}+\sum_{m\neq 0}e^{im\frac{u}{\varepsilon}}\left[\Upsilon_{j-1}^{[m]}\cdot\widetilde{\mathcal{C}}_{1,m}^{[m+k]}(u)+\dots+\Upsilon_1^{[m]}\cdot\widetilde{\mathcal{C}}_{j-1,m}^{[m+k]}(u)\right]
    \label{eq_formula_Fourier_coeff},
\end{equation}
with 
\begin{equation}
\widetilde{\mathcal{C}}_{j-\nu,k}(u,\tau)=\sum_{l=1}^{j-\nu}\frac{(ik)^l}{l!}\sum_{\substack {a_1+\dots +a_l=j-\nu\\a_m\geq 1}}\C_{a_1}(u,\tau)\dots \C_{a_l}(u,\tau).
\label{eq_def_c_tilde}
\end{equation}
\end{lemma}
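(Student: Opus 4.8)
The plan is to expand the closed‑form expression for $\Delta$ furnished by Theorem \ref{teo_difference_manifolds},
\[
\Delta(u,\tau,\mu)=\sum_{k\in\mathbb Z}\Upsilon^{[k]}(\mu)\,e^{ik\left(u/\varepsilon-\tau+\mathcal C(u,\tau,\mu)\right)},
\]
in powers of $\mu$ and then to read off the $\tau$-Fourier coefficients. First I would fix $u\in\mathcal D^{\mathrm{out}}_{\domvar,\phi}$ and expand the exponential factor $e^{ik\mathcal C(u,\tau,\mu)}=\sum_{l\ge 0}\frac{(ik)^l}{l!}\,\mathcal C(u,\tau,\mu)^l$; this is legitimate since $\mathcal C$ is bounded on $\mathcal D^{\mathrm{out}}_{\domvar,\phi}\times\T_\sigma\times B_{\mu_0}$ by \eqref{boundsCcal}, so $e^{ik\mathcal C}$ is analytic in $\mu$ on $B_{\mu_0}$. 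The bound \eqref{boundsCcal} also forces $\mathcal C(u,\tau,0)=0$, hence $\mathcal C(u,\tau,\mu)=\sum_{a\ge 1}\mathcal C_a(u,\tau)\mu^a$. Multiplying out the $l$-fold product $\mathcal C^l$ and collecting by total $\mu$-degree shows that the coefficient of $\mu^s$ in $e^{ik\mathcal C}$ is, for $s\ge 1$, exactly $\widetilde{\mathcal C}_{s,k}(u,\tau)$ as defined in \eqref{eq_def_c_tilde}, while the $\mu^0$-coefficient is $1$. Thus $e^{ik(u/\varepsilon-\tau+\mathcal C)}=e^{ik(u/\varepsilon-\tau)}\bigl(1+\sum_{s\ge 1}\widetilde{\mathcal C}_{s,k}(u,\tau)\mu^s\bigr)$.

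Next I would substitute $\Upsilon^{[k]}(\mu)=\sum_{j'\ge 1}\Upsilon^{[k]}_{j'}\mu^{j'}$ — the $j'=0$ term vanishes because $\Delta\equiv 0$ at $\mu=0$, the manifolds coinciding there (equivalently, by the $\mathcal O(|\mu|)$ bound in Theorem \ref{teo_existence_outer}) — and form the Cauchy product, collecting the coefficient of $\mu^j$:
\[
\Delta_j(u,\tau)=\sum_{m\in\mathbb Z}e^{im(u/\varepsilon-\tau)}\Bigl(\Upsilon^{[m]}_j+\sum_{s=1}^{j-1}\Upsilon^{[m]}_{j-s}\,\widetilde{\mathcal C}_{s,m}(u,\tau)\Bigr).
\]
The rearrangement is justified by absolute convergence: the $\mu$-series converge on $B_{\mu_0}$, while the sum over $m$ converges because $\Upsilon^{[m]}$ decays exponentially in $|m|$ (Theorem \ref{teo_difference_manifolds}, or Lemma \ref{lemma_bound_upsilon_no_tilde}) and $\widetilde{\mathcal C}_{s,m}$ grows only polynomially in $|m|$ (via the factors $(im)^l$ with $l\le s\le j-1$) against the exponentially decaying Fourier coefficients of the $\mathcal C_a$. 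Finally I would extract the $\tau$-Fourier coefficient of index $k$. In the term $\sum_m\Upsilon^{[m]}_j e^{im(u/\varepsilon-\tau)}$ only $m=-k$ contributes, giving $e^{-iku/\varepsilon}\Upsilon^{[-k]}_j$. In the term involving $\widetilde{\mathcal C}_{s,m}(u,\tau)=\sum_p\widetilde{\mathcal C}^{[p]}_{s,m}(u)e^{ip\tau}$, the coefficient of $e^{ik\tau}$ selects $p=m+k$, producing $\sum_m e^{imu/\varepsilon}\sum_{s=1}^{j-1}\Upsilon^{[m]}_{j-s}\widetilde{\mathcal C}^{[m+k]}_{s,m}(u)$; and since $\widetilde{\mathcal C}_{s,0}\equiv 0$ for $s\ge1$ (the factor $(i\cdot 0)^l$ kills every term), the $m=0$ summand drops out. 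Writing the $s$-sum out as $\Upsilon^{[m]}_{j-1}\widetilde{\mathcal C}^{[m+k]}_{1,m}+\dots+\Upsilon^{[m]}_1\widetilde{\mathcal C}^{[m+k]}_{j-1,m}$ yields exactly \eqref{eq_formula_Fourier_coeff}.

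The argument is essentially bookkeeping across three nested expansions — the exponential series for $e^{ik\mathcal C}$, the Taylor series in $\mu$ of $\mathcal C$ and of the $\Upsilon^{[k]}$, and the $\tau$-Fourier series — so the only genuine point requiring care is the interchange of the sum over the harmonic index with the $\mu$-expansion and the $\tau$-integration. This is controlled by the exponential decay of $\Upsilon^{[k]}$ together with the uniform bound on $\mathcal C$ from \eqref{boundsCcal}, which render the tails uniformly summable in $k$. I do not foresee any structural obstacle beyond this routine justification.
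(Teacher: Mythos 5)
Your proposal is correct and follows essentially the same route as the paper: expand the closed form $\sum_k\Upsilon^{[k]}(\mu)e^{ik(u/\varepsilon-\tau+\mathcal C)}$ in powers of $\mu$ (with $\widetilde{\mathcal C}_{s,k}$ arising as the $\mu^s$-coefficient of $e^{ik\mathcal C}$), match the coefficient of $\mu^j$ to obtain $\Delta_j(u,\tau)=\sum_m e^{im(u/\varepsilon-\tau)}\bigl(\Upsilon^{[m]}_j+\sum_{s=1}^{j-1}\Upsilon^{[m]}_{j-s}\widetilde{\mathcal C}_{s,m}\bigr)$, then project onto the $k$-th $\tau$-Fourier mode and rearrange, using $\widetilde{\mathcal C}_{s,0}\equiv 0$ to discard $m=0$. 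The only place where the paper does noticeably more work is precisely the step you flag as ``routine'': it records explicit bounds $|\widetilde{\mathcal C}_{\ell,k}|\le M|k|^\ell/\varrho^\ell$ and $|\widetilde{\mathcal C}_{\ell,k}^{[l]}|\le M|k|^\ell\varrho^{-1}e^{-|l|\sigma}$ and combines them with the exponential decay of $\Upsilon^{[k]}$ to verify the absolute convergence that licenses the double rearrangement of the sums over $k$, $m$ and $l$; your sketch correctly identifies the ingredients but leaves this verification implicit.
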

\begin{proof}
We use that $\mathcal{C}(u,\tau,\mu)$ and $\Upsilon^{[k]}(\mu)$ depend analytically on $\mu$ and that $\Delta(u,\tau,0)=\mathcal{C}(u,\tau,0)=\Upsilon^{[k]}(0)=0$, so they admit the expansions $\mathcal{C}(u,\tau,\mu)=\sum_{j\geq 1}\mathcal{C}_j(u,\tau)\cdot \mu^j$ and $\Upsilon^{[k]}(\mu)=\sum_{j\geq 1}\Upsilon_j^{[k]}\cdot \mu^j$.
We fix $j\geq 1$ and we remark that $M$ denotes a generic constant that can (and usually will) depend on $j$. Using these expansions and equating the terms of order $\mu^j$ in the expression~\eqref{doubleexpr_Delta} for $\Delta(u,\tau,\mu)$, we obtain
\begin{equation}
\Delta_j(u,\tau)=\sum_{k\in\mathbb{Z}}e^{ik(u/\varepsilon-\tau)}\cdot\left[
\Upsilon_j^{[k]}+\sum_{\nu=1}^{j-1}\Upsilon_{\nu}^{[k]} \cdot\widetilde{\mathcal{C}}_{j-\nu,k}(u,\tau)
\right],
\label{eq_formula_order_mu}
\end{equation}
with $\widetilde{\mathcal{C}}_{j-\nu,k}(u,\tau)$ defined in~\eqref{eq_def_c_tilde}.
 We have used the absolute convergence of the series in $\mu$ of $\Upsilon^{[k]}(\mu)$ and $\mathcal{C}(u,\tau,\mu)$ to rearrange terms in the formula. To work with the series we need to find bounds for $\widetilde{\mathcal{C}}_{1,k}, \cdots , \widetilde{\mathcal{C}}_{j-1,k}$. We use that, from Theorem \ref{teo_difference_manifolds}, for $u\in\mathcal{D}^{\mathrm{out}}_{\domvar,\phi}$ and $\tau\in\mathbb{T}_\sigma$:
$$
|\C_{a_m}(u,\tau)|\leq \frac{M}{\domvar}
$$
and then
\begin{equation}
|\widetilde{\mathcal{C}}_{\ell,k}(u,\tau)|\leq M\cdot\frac{|k|^\ell}{\domvar^\ell}\qquad \ell=1,\dots,j-1.
\label{eq_bounds_C}
\end{equation}
Note that $\widetilde{\mathcal{C}}_{\ell,0}=0$.

We consider now the Fourier series of $\Delta_j$ and $\widetilde{\mathcal{C}}_{\ell,\domvar}$ (in \eqref{eq_def_c_tilde}): $
\Delta_j(u,\tau)=\sum_{k\in\mathbb{Z}}\Delta_j^{[k]}(u)\cdot e^{ik\tau}
$ and $\tilde{\C}_{\ell,k}(u,\tau)=\sum_{l\in\mathbb{Z}}\tilde{\C}_{\ell,k}^{[l]}(u)\cdot e^{il\tau}.
$
We plug this expression in~\eqref{eq_formula_order_mu} and we obtain
\begin{align}
\Delta_j(u,\tau)&=\sum_{k\in\mathbb{Z}}e^{ik(u/\varepsilon-\tau)}\left[
\Upsilon_j^{[k]}+\sum_{l\in\mathbb{Z}}e^{il\tau}\left(\Upsilon_{j-1}^{[k]}\cdot\widetilde{\mathcal{C}}_{1,k}^{[l]}(u)+\Upsilon_{j-2}^{[k]}\cdot\widetilde{\mathcal{C}}_{2,k}^{[l]}(u)+\dots+\Upsilon_1^{[k]}\cdot\widetilde{\mathcal{C}}_{j-1,k}^{[l]}(u)\right)
\right].
\label{eq_delta_j_partial_fourier}
\end{align}

Since $\tilde{\C}_{\nu,k}(u,\tau)$ is analytic in $\tau$ in a strip of width $\sigma$ and satisfies bound \eqref{eq_bounds_C} we have that:
\begin{equation}
|\tilde\C_{\nu,k}^{[l]}(u)|\leq M\cdot\frac{|k|^\nu}{\domvar}\cdot e^{-|l|\sigma}.
\label{eq_bound_C_fourier}
\end{equation}
where $M$ depends only on $j$. Besides, for $u\in\mathcal{D}_{\domvar,\phi}^{\mathrm{out}}$, $\tau\in\mathbb{T}_{\sigma/2}$
$$
\left|e^{ik(u/\varepsilon-\tau)}\right|\leq e^{|k|(\frac{\pi}{2\varepsilon}-\domvar+\sigma/2)}.
$$
With those bounds and Lemma \ref{lemma_bound_upsilon_no_tilde} we can check all the terms in \eqref{eq_delta_j_partial_fourier} are absolutely convergent series. Indeed, for $u\in\mathcal{D}_{\domvar,\phi}^{\mathrm{out}}$, $\tau\in\mathbb{T}_{\sigma/2}$, the first term
$$
\left|\sum_{k\in\mathbb{Z}}
e^{ik(u/\varepsilon-\tau)}\cdot\Upsilon_j^{[k]}\right|\leq \sum_{k\in\mathbb{Z}}e^{|k|(\frac{\pi}{2\varepsilon}-\domvar)}e^{|k|(\sigma/2)}\cdot\frac{M}{\varepsilon\domvar^3}\cdot e^{-|k|(\frac{\pi}{2\varepsilon}-\domvar-M/\domvar)}e^{-|k|\sigma}=M\cdot\sum_{k\in\mathbb{Z}} e^{-|k|(\sigma/2-M/\domvar)
}< \infty,
$$
where in the last inequality we have used that, for large enough $\domvar$, the exponent $-(\sigma/2-M/\domvar)$ is negative. As for the next terms, we take $\nu=1,\dots j-1$ and we have:
\begin{align*}
\left|\sum_{k\in\mathbb{Z}}\right.&\left.e^{ik(u/\varepsilon-\tau)}\cdot \Upsilon_{j-\nu}^{[k]}\sum_{l\in\mathbb{Z}}e^{il\tau}\cdot\tilde{\C}_{\nu,k}^{[l]}(u)\right|\leq\sum_{k\in\mathbb{Z}}e^{|k|(\frac{\pi}{2\varepsilon}-\domvar)}e^{|k|(\sigma/2)}\cdot\frac{M}{\varepsilon^2\domvar^3}\cdot e^{-|k|(\frac{\pi}{2\varepsilon}-\domvar-M/\domvar)}e^{-|k|\sigma}\cdot M\cdot\frac{|k|^\nu}{\domvar}\\&\leq \frac{M}{\varepsilon^2\domvar^{4}}\cdot\sum_{k\in\mathbb{Z}}e^{-|k|(\sigma/2-M/\domvar)}\cdot|k|^{\nu},
\end{align*}
where the last sum is finite provided, again, $\domvar$ is large enough.

Since all terms converge absolutely, we can rearrange expression \eqref{eq_delta_j_partial_fourier}:
{\small
$$
\Delta_j(u,\tau)=
\sum_{k\in\mathbb{Z}}e^{ik(u/\varepsilon-\tau)}\cdot
\Upsilon_j^{[k]}+\sum_{k\neq 0}\sum_{l\in\mathbb{Z}}e^{iku/\varepsilon}e^{i(l-k)\tau}\cdot\left[\Upsilon_{j-1}^{[k]}\cdot \widetilde{\mathcal{C}}^{[l]}_{1,k}(u)+\Upsilon_{j-2}^{[k]}\cdot\widetilde{\mathcal{C}}_{2,k}^{[l]}(u)+\dots+\Upsilon_1^{[k]}\cdot\widetilde{\mathcal{C}}_{j-1,k}^{[l]}(u)\right],
$$
}where we have used that (see \eqref{eq_def_c_tilde}) $\tilde{\C}_{1,0}=\dots=\tilde{\C}_{j-1,0}=0$. We perform some changes in the indices: first, we set $l-k=m$. Since $l$ runs over all integers, $m$ does too:
{\small
$$
\Delta_j(u,\tau)=\sum_{k}e^{ik\frac{u}{\varepsilon}}\cdot
\Upsilon_j^{[k]}\cdot e^{-ik\tau}+\sum_{k\neq 0}\sum_{m\in\mathbb{Z}}e^{ik\frac{u}{\varepsilon}}e^{im\tau}\cdot\left[\Upsilon_{j-1}^{[k]}\cdot \widetilde{\mathcal{C}}_{1,k}^{[m+k]}(u)+\Upsilon_{j-2}^{[k]}\cdot\widetilde{\mathcal{C}}_{2,k}^{[m+k]}(u)+\dots+\Upsilon_1^{[k]}\cdot\widetilde{\mathcal{C}}_{j-1,k}^{[m+k]}(u)
\right]
$$
} and the expression~\eqref{eq_formula_Fourier_coeff} for $\Delta_j^{[k]}(u)$ is proven for 
$u\in\mathcal{D}^{\mathrm{out}}_{\domvar,\phi}$.
\end{proof}

We recall that, by definition, $n\in\mathbb{N}$ is such that $1\in G_n$ and $1\notin G_j$ for $j=1,\dots,n-1$. Then, Lemma \ref{lemma_Fourier_coeff_outer} implies that $\Delta_j^{[\pm 1]}(u)=0$ for $j=1,\dots,n-1$. In the next lemma we will use this fact as a condition for a sharper bound on the coefficients $\Upsilon_j^{[\pm 1]}$. 

\begin{lemma}\label{lemma_bound_upsilon_k_1}
For $j=1,\dots,n-1$ the following bound for the Taylor coefficient $\Upsilon_j^{[\pm 1]}$ holds:
\begin{equation}
|\Upsilon_j^{[\pm 1]}|\leq M\cdot\frac{e^{-2(\frac{\pi}{2\varepsilon}-\domvar-M/\domvar)}}{\varepsilon\domvar^{4}},
\label{eq_improved_bound_Upsilon}
\end{equation}
for $\domvar$ as defined in formula \eqref{eq_definition_outer_domains} large enough.
\end{lemma}
\begin{proof}

Assume $n>1$ (the case $n=1$ is void). When $j=1$, by formula \eqref{eq_formula_Fourier_coeff}:
$$
\Delta_1^{[\pm 1]}(u)=e^{\mp i\frac{u}{\varepsilon}}\cdot \Upsilon_1^{[\mp 1]}.
$$
Since $1\notin G_1$, Lemma \ref{lemma_Fourier_coeff_outer} implies $\Delta_1^{[\pm 1]}(u)=0$ and hence, $\Upsilon_1^{[\mp 1]}=0$.
In particular, it satisfies the inequality in the statement.

Take $j=2,\dots,n-1$. Assume by induction that $\Upsilon_\nu^{[\pm 1]}$ satisfies the bound in \eqref{eq_improved_bound_Upsilon} if $\nu=1,\dots,j-1$.
Using formula \eqref{eq_formula_Fourier_coeff}:
$$
\Delta_j^{[k]}(u)=e^{-ik\frac{u}{\varepsilon}}\cdot\Upsilon_j^{[-k]}+\sum_{m\neq 0}e^{im\frac{u}{\varepsilon}}\cdot\left[\Upsilon_{j-1}^{[m]}\cdot\widetilde{\mathcal{C}}_{1,m}^{[k+m]}(u)+\dots+\Upsilon_1^{[m]}\cdot\widetilde{\mathcal{C}}_{j-1,m}^{[k+m]}(u)\right].
$$
Since $1\notin G_j$, by \ref{lemma_Fourier_coeff_outer}, $\Delta^{[\pm 1]}_j(u)
=0$. We take $k=1$ and equate the previous formula to $0$. Redistributing and replacing $u=0$:
\begin{align*}
\Upsilon_j^{[-1]}&=-\sum_{m\neq 0}\Upsilon_{j-1}^{[m]}\cdot\widetilde{\mathcal{C}}_{1,m}^{[1+m]}(0)+\dots+\Upsilon_1^{[m]}\cdot \widetilde{\mathcal{C}}_{j-1,m}^{[1+m]}(0)=\\
&=-\left(\Upsilon_{j-1}^{[1]}\cdot\widetilde{\mathcal{C}}_{1,1}^{[2]}(0)+\dots+\Upsilon_1^{[1]}\cdot \widetilde{\mathcal{C}}_{j-1,1}^{[2]}(0)+\Upsilon_{j-1}^{[-1]}\cdot\widetilde{\mathcal{C}}_{1,-1}^{[0]}(0)+\dots+\Upsilon_1^{[-1]}\cdot \widetilde{\mathcal{C}}_{j-1,-1}^{[0]}(0)\right)
\\
&-\left(\sum_{|m|>1}\Upsilon_{j-1}^{[m]}\cdot\widetilde{\mathcal{C}}_{1,m}^{[1+m]}(0)+\dots+\Upsilon_1^{[m]}\cdot \widetilde{\mathcal{C}}_{j-1,m}^{[1+m]}(0)\right)=:A+B.
\end{align*}
To bound $A$ we use the induction hypothesis along with the bounds of $\tilde{\C}_{\nu,m}^{[l]}$ given by \eqref{eq_bound_C_fourier}:
\begin{align*}
|A|&\leq 
M\cdot\frac{e^{-2(\frac{\pi}{2\varepsilon}-\domvar-M/\domvar)}}{\varepsilon\domvar^4} 
\cdot \left(\frac{M}{\domvar}\cdot e^{-2\sigma}(j-1)  +\frac{M}{\domvar}\cdot(j-1)\right)\leq M\cdot\frac{e^{-2(\frac{\pi}{2\varepsilon}-\domvar-M/\domvar)}}{\varepsilon\domvar^5}.
\end{align*}
To bound $B$ we use \ref{lemma_bound_upsilon_no_tilde} as well as \eqref{eq_bound_C_fourier}:
\begin{align*}
|B|\leq &\sum_{|m|>1}  \frac{M}{\varepsilon\domvar^3}\cdot e^{-|m|(\frac{\pi}{2\varepsilon}-\domvar-M/\domvar)}\cdot e^{-|m|\sigma}\left(M\cdot\frac{|m|}{\domvar}e^{-|1+m|\sigma}+\dots+M\cdot\frac{|m|^{j-1}}{\domvar}e^{-|1+m|\sigma}\right)\\
&\leq 
\sum_{|m|>1}  \frac{M}{\varepsilon\domvar^3}\cdot e^{-|m|(\frac{\pi}{2\varepsilon}-\domvar-M/\domvar)}\cdot e^{-|m|\sigma}M\cdot\frac{(j-1)}{\domvar}|m|^{j-1}e^{-\sigma}e^{-|m|\sigma}\\
&
\leq\sum_{|m|>1}\frac{M}{\varepsilon\domvar^4}\cdot |m|^{j-1}e^{-|m|(\frac{\pi}{2\varepsilon}-\domvar-M/\domvar+2\sigma)}\leq 
M\cdot\frac{e^{-2(\frac{\pi}{2\varepsilon}-\domvar-M/\domvar)}}{\varepsilon\domvar^4}.
\end{align*}
We complete the proof by combining the bounds for $A$ and $B$.

\end{proof}

By \ref{lemma_bound_upsilon_no_tilde} and \ref{lemma_bound_upsilon_k_1}, $\Upsilon_j^{[k]}$ with $j=1,\dots,n-1$ are, at least, of squared exponentially small order. To finish the proof of \ref{prop_higher_exponential} we only need to prove that the size of the coefficients transfers to the size of the function when $u$ and $\tau$ are real.

\begin{proof}[End of the proof of
\ref{prop_higher_exponential}]
We consider formula \eqref{eq_formula_order_mu} 
\begin{equation*}
\Delta_j(u,\tau)=\sum_{k\in\mathbb{Z}}e^{ik(u/\varepsilon-\tau)}\cdot\left[
\Upsilon_j^{[k]}+\sum_{\nu=1}^{j-1}\Upsilon_{\nu}^{[k]} \cdot\widetilde{\mathcal{C}}_{j-\nu,k}(u,\tau)
\right].
\end{equation*}
and evaluate it for $u\in\mathcal{D}^{\mathrm{out}}_{\domvar,\phi}\cap\{|\Im(u)|<\domvar\varepsilon\}$ and $\tau\in\mathbb{T}$.
We also use \eqref{eq_bounds_C} to bound $|\widetilde{\mathcal{C}}_{\nu,k}(u,\tau)|$. We split the sum into $k=\pm 1$ (we bound with Lemma \ref{lemma_bound_upsilon_k_1}) and $|k|>1$ (we use Lemma \ref{lemma_bound_upsilon_no_tilde}):
\begin{align*}
|\Delta_j(u,\tau)-&\Upsilon_j^{[0]}|=\left|\sum_{k\neq 0}e^{ik(u/\varepsilon-\tau)}\cdot\left[
\Upsilon_j^{[k]}+\sum_{\nu=1}^{j-1}\Upsilon_{\nu}^{[k]} \cdot\widetilde{\mathcal{C}}_{j-\nu,k}(u,\tau)
\right]\right|\leq\\
&
\sum_{k\neq 0}e^{|k|\frac{\domvar\varepsilon}{\varepsilon}}\cdot\left[
\left|\Upsilon_j^{[k]}\right|+\sum_{\nu=1}^{j-1}\left|\Upsilon_{\nu}^{[k]}\right| \cdot\left|\widetilde{\mathcal{C}}_{j-\nu,k}(u,\tau)\right|
\right]\leq\\
& e^\domvar\cdot|\Upsilon_j^{[1]}|+\sum_{\nu=1}^{j-1}e^\domvar\cdot\left|\Upsilon_{\nu}^{[1]}\right| \cdot\left|\widetilde{\mathcal{C}}_{j-\nu,1}(u,\tau)\right|+e^\domvar\cdot|\Upsilon_j^{[-1]}|+\sum_{\nu=1}^{j-1}e^\domvar\cdot\left|\Upsilon_{\nu}^{[-1]}\right| \cdot\left|\widetilde{\mathcal{C}}_{j-\nu,-1}(u,\tau)\right|\\
&+\sum_{|k|>1} e^{|k|\cdot\domvar}\cdot|\Upsilon_j^{[k]}|+\sum_{\nu=1}^{j-1}e^{|k|\cdot\domvar}\cdot\left|\Upsilon_{\nu}^{[k]}\right| \cdot\left|\widetilde{\mathcal{C}}_{j-\nu,k}(u,\tau)\right|\leq\\
&
M\cdot\frac{e^{-2(\frac{\pi}{2\varepsilon}-\domvar-M/\domvar)}}{\varepsilon\domvar^4}\cdot e^{\domvar}\cdot\left(1+\frac{j-1}{\domvar}\right)+\sum_{|k|>1}M\cdot\frac{e^{-|k|(\frac{\pi}{2\varepsilon}-\domvar-M/\domvar)}}{\varepsilon\domvar^3}\cdot e^{|k|\cdot\domvar}\cdot\left(1+\sum_{l=1}^{j-1}\frac{|k|^l}{\domvar}\right)\leq\\& 
M\cdot\frac{e^{-2(\frac{\pi}{2\varepsilon}-\frac{3}{2}\domvar-M/\domvar)}}{\varepsilon\domvar^4}+M\cdot\frac{e^{-2(\frac{\pi}{2\varepsilon}-2\domvar-M/\domvar)}}{\varepsilon\domvar^3}\leq M\cdot\frac{e^{-2(\frac{\pi}{2\varepsilon}-2\domvar)}}{\varepsilon\domvar^3}.
\end{align*}
Since the previous bounds hold for $u\in\mathcal{D}^{\mathrm{out}}_{\domvar,\phi}\cap\{|\Im(u)|< \domvar\varepsilon\}$, we obtain 
$$
|\partial_u\Delta_j(u,\tau)|\leq M\cdot\frac{e^{-2(\frac{\pi}{2\varepsilon}-2\domvar)}}{\varepsilon^2\domvar^4}
$$
via a Cauchy estimate. Note that $\partial_u\Upsilon_j^{[0]}=0$.
\end{proof}

\subsection{Analysis of $\Delta^{\geq n}$} \label{sec_error_estimate_higher_order_terms}

Define $\delta_0$

\begin{equation}
\delta_{0}(u,\tau,\mu):=\frac{2e^{-\frac{\pi}{2\varepsilon}}}{\varepsilon} \cdot\Re \left(\chi^{[-1]}(\mu)\cdot e^{i\left(\tau-\frac{u}{\varepsilon}\right)}\right).
\label{eq_reminder_delta_0}
\end{equation}
From~\eqref{eq_known_splitting_inner} in Theorem \ref{teo_difference_manifolds} we know that 
\begin{equation}\label{first_bound_Delta>n}
\left | \partial_u \Delta (u,\tau,\mu) - \partial_u \delta_0(u,\tau,\mu) \right | \leq M\cdot \frac{|\mu|}{\log(1/\varepsilon)\cdot \varepsilon^2}\cdot e^{-\frac{\pi}{2\varepsilon}}.
\end{equation}

In this section we focus on the analysis of $\partial_u \Delta^{\geq n} - \partial_u \delta_0$,  with $\Delta^{\geq n}(u,\tau,\mu)= \sum_{j\geq n} \Delta_j(u,\tau) \cdot \mu^j $, the tail of the Taylor series of $\Delta$ around $\mu=0$, starting at $n$. 
To this end we use~\eqref{first_bound_Delta>n} together with a suitable version of Schwartz's lemma:

\begin{lemma} Let $\eta>0$, $\nu \in \mathbb{N}$ and $h$ be an analytic function of $\mu$ defined in $B_{\eta} \subset\mathbb{C}$. 
Assume that $\sup \{ |h(\mu)| : \mu \in B_\eta \} \leq M_h$ for some constant $M_h$.  
Let $h(\mu)=\sum_{j\geq 0}h_j\cdot\mu^j$ be its power expansion around $\mu=0$.
\begin{enumerate}
\item 
If $h^{(j)}(0)=0$ for $j=0,\dots,\nu-1$. Then $
|h(\mu)|\leq |\mu|^\nu\cdot\eta^{-\nu}\cdot M_h.
$
\item 
There exists a constant $M_2$ (depending only on $\nu$ and $\eta$) such that the function $h^{\geq \nu} (\mu) = \sum_{j\geq \nu} h_j\cdot\mu^j$ is bounded by $
|h^{\geq \nu} (\mu)|\leq |\mu|^\nu\cdot M_2\cdot M_h.
$
\end{enumerate}
\label{lemma_bound_tail}
\end{lemma}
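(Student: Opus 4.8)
The plan is to prove both assertions by the classical Schwarz-lemma device: divide out the prescribed zero at $\mu=0$ and apply the maximum modulus principle, supplemented by Cauchy estimates for the discarded low-order Taylor coefficients.

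For the first assertion, I would introduce $F(\mu):=h(\mu)/\mu^{\nu}$. Since $h_0=\dots=h_{\nu-1}=0$ by hypothesis, the apparent singularity at $\mu=0$ is removable and $F$ is analytic on all of $B_\eta$, with $F(0)=h_\nu$. For any radius $0<r<\eta$, applying the maximum modulus principle to $F$ on the closed disc $\{|\mu|\le r\}$ gives $|F(\mu)|\le r^{-\nu}\max_{|\zeta|=r}|h(\zeta)|\le M_h\,r^{-\nu}$ for all $|\mu|\le r$. Fixing $\mu\in B_\eta$ and letting $r\to\eta^{-}$ yields $|F(\mu)|\le M_h\,\eta^{-\nu}$, that is, $|h(\mu)|\le |\mu|^{\nu}\eta^{-\nu}M_h$, as claimed.

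For the second assertion, I would first bound the discarded coefficients by Cauchy's inequality: for $j=0,\dots,\nu-1$ and $0<r<\eta$ one has $|h_j|\le r^{-j}\max_{|\zeta|=r}|h(\zeta)|\le M_h\,r^{-j}$, hence $|h_j|\le M_h\,\eta^{-j}$. Therefore the tail $h^{\ge\nu}(\mu)=h(\mu)-\sum_{j=0}^{\nu-1}h_j\mu^j$ is analytic on $B_\eta$ and satisfies, for $\mu\in B_\eta$, the crude bound $|h^{\ge\nu}(\mu)|\le M_h+\sum_{j=0}^{\nu-1}|h_j|\,|\mu|^{j}\le (\nu+1)M_h$. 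Since $h^{\ge\nu}$ vanishes to order $\nu$ at $\mu=0$, applying the first assertion to $h^{\ge\nu}$ (with $(\nu+1)M_h$ in the role of $M_h$) gives $|h^{\ge\nu}(\mu)|\le |\mu|^{\nu}\eta^{-\nu}(\nu+1)M_h$. Thus the claim holds with $M_2=(\nu+1)\eta^{-\nu}$, a constant depending only on $\nu$ and $\eta$.

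I expect no genuine obstacle here; the only point requiring a little care is that the hypothesis bounds $|h|$ on the \emph{open} disc $B_\eta$ rather than on its closure, so the maximum modulus principle cannot be invoked on $B_\eta$ directly. This is dealt with exactly as above, by working on the slightly smaller disc of radius $r<\eta$ and passing to the limit $r\to\eta^{-}$; everything else is routine bookkeeping.
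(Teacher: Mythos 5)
Your proof is correct and follows essentially the same route as the paper's: for item 1, divide out $\mu^{\nu}$ and apply the maximum modulus principle, and for item 2, bound the discarded Taylor coefficients and apply item 1 to the tail. The only differences are cosmetic but in your favor: you handle the open-disc technicality cleanly by working at radius $r<\eta$ and passing to the limit (the paper implicitly assumes a maximum is attained on $|\mu|=\eta$, which strictly requires boundedness up to the boundary or the same limiting argument you give), and by invoking Cauchy's inequality directly you obtain the sharper bound $|h_j|\le M_h\eta^{-j}$ and hence a cleaner explicit constant $M_2=(\nu+1)\eta^{-\nu}$, whereas the paper routes through its Lemma \ref{lemma_bounds_coefficients_powerseries} and carries an unspecified constant $M_1$.
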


\begin{remark}
We will be using Lemma \ref{lemma_bound_tail} for functions depending on $u,\tau,\varepsilon$ and $\mu$. 
We will consider the analytic dependence on $\mu$ and regard the rest of the variables as parameters. 
Note that the constants appearing in the bounds of the Lemma only depend on the radius of the ball of analyticity with respect to $\mu$ and the integer $\nu$. 
In particular, the dependence on the bounds on the parameter $\varepsilon$ remains unaltered.
\end{remark}

The following proposition is an almost straightforward consequence of bound~\eqref{first_bound_Delta>n} and Lemma \ref{lemma_bound_tail}.  
\begin{proposition}
Let $ {\Delta}^{\geq n} (u,\tau,\mu)$ be the tail of the Taylor series of $\Delta(u,\tau,\mu)$. Then
\begin{equation}
|\partial_u\ {\Delta}^{\geq n}(u,\tau,\mu)-\partial_u\delta_0(u,\tau,\mu)|\leq M\cdot\frac{|\mu|^n}{\log(1/\varepsilon)\cdot \varepsilon^2}\cdot e^{-\frac{\pi}{2\varepsilon}}.
\label{eq_bound_tail}
\end{equation}
\label{prop_bound_tail}
\end{proposition}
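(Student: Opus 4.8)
The plan is to upgrade the factor $|\mu|$ in the known estimate \eqref{first_bound_Delta>n} to $|\mu|^n$ by exploiting analyticity in $\mu$ through the Schwartz-type Lemma \ref{lemma_bound_tail}, without needing Proposition \ref{prop_higher_exponential}. Fix real $u\in\mathcal{D}^{\mathrm{out}}_{\domvar,\phi}\cap\mathbb{R}$, $\tau\in\mathbb{T}$ and $\varepsilon\in(0,\varepsilon_0)$, and regard $h(\mu):=\partial_u\Delta(u,\tau,\mu)-\partial_u\delta_0(u,\tau,\mu)$ as a function of $\mu$ alone. This $h$ is holomorphic on $B_{\mu_0}$: the term $\partial_u\Delta$ is analytic in $\mu$ there by Theorem \ref{teo_existence_outer}, and $\partial_u\delta_0$ is analytic in $\mu$ because $\chi^{[-1]}(\mu)$ is analytic by Theorem \ref{teo_inner_existence}. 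To make sense of $\delta_0$ for complex $\mu$, the real part in \eqref{eq_reminder_delta_0} is read as the sum of its two analytic half-series, i.e.\ $\chi^{[-1]}(\mu)e^{i(\tau-u/\varepsilon)}$ plus the series obtained by conjugating the Taylor coefficients of $\chi^{[-1]}$ and replacing $e^{i(\tau-u/\varepsilon)}$ by $e^{-i(\tau-u/\varepsilon)}$; this agrees with \eqref{eq_reminder_delta_0} for real $\mu$, and one checks as usual that $\partial_u\delta_0(u,\tau,\mu)=\tfrac{2e^{-\pi/(2\varepsilon)}}{\varepsilon^2}\,\tfrac{1}{2i}\bigl(\chi^{[-1]}(\mu)e^{i(\tau-u/\varepsilon)}-\widehat{\chi}^{[-1]}(\mu)e^{-i(\tau-u/\varepsilon)}\bigr)$.

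Next I would identify the tail $h^{\geq n}(\mu)=\sum_{j\geq n}h_j\mu^j$. By Theorem \ref{teo_main_second}, $\chi^{[-1]}(\mu)=\chi^{[-1]}_n\mu^n+\mathcal{O}(\mu^{n+1})$, hence $\partial_u\delta_0(u,\tau,\mu)=\mathcal{O}(\mu^n)$ as a power series in $\mu$. Since $\partial_u\Delta=\sum_{j\geq 1}(\partial_u\Delta_j)\mu^j$ (recall $\Delta_0\equiv 0$), the Taylor terms of $h$ of order $1,\dots,n-1$ are exactly those of $\partial_u\Delta^{<n}$ and the terms of order $\geq n$ are those of $\partial_u\Delta^{\geq n}-\partial_u\delta_0$; therefore $h^{\geq n}(\mu)=\partial_u\Delta^{\geq n}(u,\tau,\mu)-\partial_u\delta_0(u,\tau,\mu)$, which is precisely the quantity to be estimated.

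Finally I would bound $\sup_{\mu\in B_{\mu_0}}|h(\mu)|$. By the splitting formula \eqref{eq_known_splitting_inner} of Theorem \ref{teo_difference_manifolds} (equivalently, by \eqref{first_bound_Delta>n}), which holds for all $\mu\in B_{\mu_0}$, this supremum is at most $M_h:=M\,\mu_0\,(\log(1/\varepsilon)\,\varepsilon^2)^{-1}e^{-\pi/(2\varepsilon)}$, uniformly in the real parameters $u,\tau$ and in $\varepsilon$. Applying part 2 of Lemma \ref{lemma_bound_tail} with $\nu=n$ and $\eta=\mu_0$ gives $|h^{\geq n}(\mu)|\leq |\mu|^n M_2\,M_h$, with $M_2$ depending only on $n$ and $\mu_0$; absorbing $M_2$ and $\mu_0$ into $M$ yields exactly the claimed bound \eqref{eq_bound_tail}, and the $\varepsilon$-dependence is left untouched since the constants from Lemma \ref{lemma_bound_tail} are $\varepsilon$-independent. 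The only mildly delicate point is that one must use \eqref{eq_known_splitting_inner} in its holomorphic-in-$\mu$ form on the whole ball $B_{\mu_0}$ rather than merely for real $\mu$ (which is what Theorem \ref{teo_difference_manifolds} provides, with $\delta_0$ interpreted as above); everything else is a routine bookkeeping of Taylor orders, so I do not expect a genuine obstacle.
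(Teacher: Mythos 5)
Your argument is correct and follows the same route as the paper: bound $h(\mu)=\partial_u\Delta-\partial_u\delta_0$ via \eqref{first_bound_Delta>n}, observe that $\delta_0=\delta_0^{\geq n}$ because $\chi^{[-1]}(\mu)=\mathcal{O}(\mu^n)$, so that $h^{\geq n}$ is exactly the quantity to estimate, and then apply part~2 of Lemma~\ref{lemma_bound_tail}. Your remarks on the holomorphic-in-$\mu$ interpretation of $\delta_0$ and on the Taylor bookkeeping just spell out details the paper leaves implicit; they do not change the argument.
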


\begin{proof} 
From~\eqref{first_bound_Delta>n} and taking into account that $|\mu|\leq \mu_0$, we have that 
$$
|\partial_u\Delta(u,\tau,\mu)-\partial_u\delta_0(u,\tau,\mu)|\leq M\cdot \frac{e^{-\frac{\pi}{2\varepsilon}}}{ \log(1/\varepsilon)\cdot\varepsilon^2}.
$$

Moreover, by Theorem \ref{teo_main_second},  $\chi^{[-1]}(\mu)=\sum_{j\geq n}\chi^{[-1]}_j\cdot\mu^j$. Since the only dependence on $\mu$ of $\delta_0$ is through $\chi^{[-1]}(\mu)$, $\delta_0(u,\tau,\mu)= {\delta}_0^{\geq n}(u,\tau,\mu)$. 
Then  using Lemma \ref{lemma_bound_tail} with $\nu=n$ we obtain the bound

\begin{equation*}
|\partial_u \Delta^{\geq n }(u,\tau,\mu) - \partial_u \delta_0^{\geq n }(u,\tau,\mu)| =|\partial_u\ {\Delta}^{\geq n}(u,\tau,\mu)- \partial_u\delta_0(u,\tau,\mu)|\leq M\cdot \frac{|\mu|^n}{\log(1/\varepsilon)\cdot \varepsilon^2}\cdot e^{-\frac{\pi}{2\cdot \varepsilon}}.
\end{equation*}
\vspace{0.1cm}
\end{proof}

\subsection{End of the proof of Theorem \ref{thm_main}}
\label{sec_completion_of_the_proof}

To obtain the first asymptotic expression we differentiate \eqref{eq_reminder_delta_0}:

$$
\partial_u\delta_{0}(u,\tau,\mu)=\frac{2e^{-\frac{\pi}{2\varepsilon}}}{\varepsilon^2} \cdot\Im\left(\chi^{[-1]}(\mu)\cdot e^{i\left(\tau-\frac{u}{\varepsilon}\right)}\right).
$$
Let us now perform the error estimates. 
Taking into account that $\partial_u\delta_0=\partial_u\delta_0^{\geq n}$, we split the error term:
$$
\partial_u\Delta-\partial_u\delta_0=\left(\sum_{j=1}^{n-1}\partial_u\Delta_j\cdot \mu^j\right)+\partial_u\Delta^{\geq n}-\partial_u\delta_0.
$$
For the first part we use Proposition \ref{prop_higher_exponential}

Then

$$
\left|\sum_{j=1}^{n-1}\partial_u\Delta_j\cdot\mu^j\right|\leq\sum_{j=1}^{n-1}|\partial_u\Delta_j|\cdot|\mu^j|\leq \sum_{j=1}^{n-1}M\cdot\frac{e^{-2(\frac{\pi}{2\varepsilon}-2\domvar)}}{\varepsilon^2\domvar^4}\cdot|\mu|^j\leq M\cdot\frac{|\mu|}{\varepsilon^{2}}\cdot e^{-\frac{\pi}{2\varepsilon}\cdot 2}.
$$

To bound $|\partial_u\Delta^{\geq n}-\partial_u\delta_0|$ we use Proposition \ref{prop_bound_tail}. Then,

$$
|\partial_u\Delta(u,\tau,\mu)-\partial_u\delta_0(u,\tau,\mu)|\leq M\cdot\frac{|\mu|}{\varepsilon^{2}}\cdot e^{-\frac{\pi}{2\varepsilon}\cdot 2}+M\cdot \frac{|\mu|^n}{\log(1/\varepsilon)\cdot \varepsilon^2}\cdot e^{-\frac{\pi}{2\cdot \varepsilon}},
$$
whence the result follows.

As for item 3, we deduce it by taking the particular cases $n=1$ and $n=2$ in Theorem \ref{teo_main_second}.

\newpage
\section{Acknowledgements}
This work is part of the grants PGC2018-098676-B-100 and PID-2021-122954NB-100 funded by 
MCIN/AEI/10.13039/\\501100011033 and “ERDF A way of making Europe.” T. M. S. is supported by the Catalan Institution for Research and Advanced Studies via an ICREA Academia Prize 2019. R.M. is supported by grant PRE2019-088132 funded by MCIN/AEI/10.13039/501100011033 and “ESF Investing in your future”. This work is also supported by the Spanish State Research Agency, through the Severo Ochoa and María de Maeztu Program for Centers and Units of Excellence in R\& D (CEX2020-001084-M).

\appendix

\section{The function $\mathcal{C}$. Proof of Theorem \ref{teo_difference_manifolds}}
\label{sec_appendix_proof_c}

For the proof of this theorem we adapt the methodology in \cite{bib_exp_small}: we split the equation in two parts and use a fixed point argument to find the solution in a specific function space. We work with functions defined in $\mathcal{D}^{\mathrm{out}}_{\domvar,\phi}\times\mathbb{T}_\sigma$ (see \eqref{eq_definition_outer_domains}) and we define the Fourier norm:
$$
||f||_{\alpha,\sigma}=\sum_{k\in\mathbb{Z}}||f^{[k]}||_{\alpha}\cdot e^{|k|\sigma}.
$$
where
$$
||f||_{\alpha}=\sup_{u\in\mathcal{D}^{\mathrm{out}}_{\domvar,\phi}}\{|u^2+(\pi/2)^2|^\alpha\cdot |f(u)|\}.
$$
We consider the following Banach spaces:
$$
\mathcal{P}_{\alpha}=\{f(u,\tau):\mathcal{D}^{\mathrm{out}}_{\domvar,\phi}\times\mathbb{T}_\sigma\to\mathbb{C},\, f\text{ analytic and } ||f||_{\alpha,\sigma}<\infty\}.
$$

The function $\mathcal{C}$ is such that $\Delta(u,\tau,\mu)=\Upsilon(u-\varepsilon\tau+\varepsilon\cdot\mathcal{C}(u,\tau,\mu))$. By subtracting the Hamilton-Jacobi equation \eqref{eq_HJ} for the stable and unstable manifolds we obtain:
$$
\frac{1}{\varepsilon}\partial_{\tau}\C(u,\tau,\mu)+\partial_u\C(u,\tau,\mu)=-\frac{1}{8}\cosh^2(u)(\partial_uT^\mathrm{u}(u,\tau,\mu)+\partial_uT^\mathrm{s}(u,\tau,\mu))\left(\frac{1}{\varepsilon} +\partial_u\C(u,\tau,\mu)\right).
$$
Denoting $A(u,\tau,\mu)=-\frac{1}{8}\cosh^2(u)(\partial_uT^\mathrm{u}(u,\tau,\mu)+\partial_uT^\mathrm{s}(u,\tau,\mu))$ we rewrite the equation as:
$$
\frac{1}{\varepsilon}\partial_{\tau}\C(u,\tau,\mu)+\partial_u\C(u,\tau,\mu)=\frac{1}{\varepsilon}A(u,\tau,\mu)+A(u,\tau,\mu)\partial_u\C(u,\tau,\mu)
$$
or, equivalently,
\begin{equation}
\frac{1}{\varepsilon}\partial_{\tau}\C(u,\tau,\mu)+\partial_u\C(u,\tau,\mu)=\frac{1}{\varepsilon}A(u,\tau,\mu)+\partial_u(A(u,\tau,\mu)\C(u,\tau,\mu))-\C(u,\tau,\mu)\partial_uA(u,\tau,\mu).
\label{eq_appendix_C}
\end{equation}
This equation is of the type 
$$
\frac{1}{\varepsilon}\partial_{\tau}\C(u,\tau)+\partial_u\C(u,\tau)=h(u,\tau).
$$
In order to invert the linear operator on the left-hand side in the domain $\mathcal{D}_{\domvar,\phi}^{\mathrm{out}}$ we expand it in Fourier series and define an inverse for each harmonic:
$$
\C^{[k]}(u)=\C^{[k]}(a_k)\cdot e^{\frac{ik}{\varepsilon}(a_k-u)}+\int_{a_k}^{u}e^{i\frac{k}{\varepsilon}(s-u)} h^{[k]}(s)ds,
$$
where $a_k=i\left(\frac{\pi}{2}-\domvar\varepsilon\right)$ if $k>0$, $a_k=-i\left(\frac{\pi}{2}-\domvar\varepsilon\right)$ if $k<0$ and $a_0=-\rho$ (see (277) in \cite{bib_exp_small}).
Since we are looking for any solution, we pick $\C^{[k]}(a_k)=0$ and we have:
$$
\C^{[k]}(u)=\int_{a_k}^{u}e^{i\frac{k}{\varepsilon} (s-u)} h^{[k]}(s)ds.
$$
Using the notation
\begin{equation}
\G^{[k]}(h)=\int_{a_k}^{u}e^{i\frac{k}{\varepsilon} (s-u)} h^{[k]}(s)ds
\end{equation}
we can define the inverse operator as:
\begin{equation}
\G(h)=\sum_{k\in\mathbb{Z}}\K{\G}(h)e^{ik\tau}.
\label{eq_operator_G_proof}
\end{equation}
We state in the following lemma the relevant properties of $\mathcal{G}$ and refer for the details of the proof to Lemma 9.2 in \cite{bib_exp_small}.

\begin{lemma}
\label{lemma_properties_G_C}
The operator $\G$ defined on $\mathcal{P}_{\alpha}$ satisfies the following properties. For $h\in\mathcal{P}_\alpha$ with $\alpha\geq 0$.

\begin{enumerate}
\item $\partial_u(\G(h))\in\mathcal{P}_{\alpha}$ and $||\partial_u(\G(h))||_{\alpha,\sigma}\leq M\cdot||h||_{\alpha,\sigma}$.

\item \textit{If} $h^{[0]}=0$, $\G(h)\in\mathcal{P}_\alpha$ and $||\G(h)||_{\alpha,\sigma}\leq M \cdot\varepsilon||h||_{\alpha,\sigma}$.

\item $\G(\partial_u h)\in \mathcal{P}_{\alpha}$ and $||\G(\partial_u h)||_{\alpha,\sigma}\leq M\cdot||h||_{\alpha,\sigma}$.

\item \textit{If} $\alpha >1$ $\G(h)\in\mathcal{P}_{\alpha-1}$ and $||\G(h)||_{\alpha-1,\sigma}\leq M\cdot||h||_{\alpha,\sigma}$.

\item $\G(h)\in\mathcal{P}_\alpha$ and $||\G(h)||_{\alpha,\sigma}\leq M\cdot||h||_{\alpha,\sigma}$.
\end{enumerate}
\end{lemma}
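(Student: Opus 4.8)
The plan is to follow closely the proof of Lemma 9.2 in \cite{bib_exp_small}, since $\G$ and the spaces $\mathcal{P}_\alpha$ are the same objects used there, adapted to our domain $\mathcal{D}^{\mathrm{out}}_{\domvar,\phi}$ and weight $|u^2+(\pi/2)^2|^\alpha$. First I would reduce everything to uniform estimates on the single-harmonic operators $\G^{[k]}$: since $\|f\|_{\alpha,\sigma}=\sum_{k}\|f^{[k]}\|_\alpha e^{|k|\sigma}$, it suffices to prove, for a constant $M$ independent of $k$ and $\varepsilon$, the bounds $\|\G^{[k]}(h)\|_\alpha\le M\|h^{[k]}\|_\alpha$ (item 5, and with the obvious modifications items 1--4), the exponential weight $e^{|k|\sigma}$ then passing through untouched because $\G^{[k]}$ acts on the $k$-th coefficient only. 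Thus the whole difficulty is concentrated in the choice of the integration paths and the resulting scalar estimates.

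For $k\neq 0$ I would integrate along the path $\gamma_k$ from $a_k=\mathrm{sign}(k)\,i(\tfrac{\pi}{2}-\domvar\varepsilon)$ to $u$ inside $\mathcal{D}^{\mathrm{out}}_{\domvar,\phi}$ used in (277) of \cite{bib_exp_small}, chosen so that $\mathrm{sign}(k)\,\Im(s)$ is nonincreasing along it; this gives $|e^{i\frac{k}{\varepsilon}(s-u)}|=e^{-\frac{k}{\varepsilon}\Im(s-u)}\le 1$ on $\gamma_k$, and also keeps $|s^2+(\pi/2)^2|$ bounded below by a fixed multiple of $|u^2+(\pi/2)^2|$ away from a short terminal piece that is handled separately. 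For $k=0$, $\G^{[0]}(h)(u)=\int_{-\rho}^u h^{[0]}(s)\,ds$ is estimated by integrating the weight along a segment from $-\rho$ to $u$ inside the (bounded) domain; here $\alpha\ge 0$ already gives item 5 for this term, while $\alpha>1$ produces the loss of one power of the weight recorded in item 4.

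With these paths fixed, the five statements follow as in \cite{bib_exp_small}: item 5 and the $k=0$ part of item 4 are the direct estimates just described; item 2 comes from one integration by parts in $\G^{[k]}(h)$, which extracts a factor $\varepsilon/(ik)$ (the boundary term and the new integrand $\partial_s h^{[k]}$ being controlled the same way, using $h^{[0]}=0$ so that $|k|\ge 1$); item 1 is obtained by differentiating under the integral, $\partial_u\G^{[k]}(h)=h^{[k]}-\frac{ik}{\varepsilon}\G^{[k]}(h)$, and bounding $\frac{|k|}{\varepsilon}|\G^{[k]}(h)(u)|\le \frac{|k|}{\varepsilon}\int_{\gamma_k}e^{-\frac{k}{\varepsilon}\Im(s-u)}|h^{[k]}(s)|\,|ds|\le M\sup_{\gamma_k}|h^{[k]}|$, the last step using $\frac{|k|}{\varepsilon}\int_0^{\infty}e^{-\frac{|k|}{\varepsilon}t}\,dt=1$; item 3 follows by integrating $\G^{[k]}(\partial_u h)$ by parts, which reduces it to a boundary term plus $\partial_u\G^{[k]}(h)$, already controlled by item 1; and item 4 for $k\neq 0$ is a consequence of item 2, since an $\varepsilon$-gain beats the loss of one power of the weight.

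The step I expect to be the main obstacle --- and where the argument must reproduce the technical work of \cite{bib_exp_small} rather than merely quote it --- is verifying that the scalar constants above are genuinely uniform in $k$ and $\varepsilon$, i.e.\ that $\gamma_k$ can be routed around the singularities $\pm i\frac{\pi}{2}$ without leaving $\mathcal{D}^{\mathrm{out}}_{\domvar,\phi}$ so that $|s^2+(\pi/2)^2|\gtrsim|u^2+(\pi/2)^2|$ holds along it, and that the terminal piece of $\gamma_k$ near $u$ contributes at most $M\|h^{[k]}\|_\alpha/|u^2+(\pi/2)^2|^\alpha$; this is precisely the geometric input of (277) and Lemma 9.2 in \cite{bib_exp_small}, which we adopt up to the cosmetic change of weight. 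Once this is in place, summing the per-harmonic bounds against $e^{|k|\sigma}$ and recalling $h\in\mathcal{P}_\alpha$ closes all five items.
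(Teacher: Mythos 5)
The paper itself gives no proof here: it states Lemma~\ref{lemma_properties_G_C} and refers to Lemma~9.2 of \cite{bib_exp_small}, which is exactly the route you take, so your reconstruction matches the paper's approach. One small caveat worth noting: your derivation of item~2 via integration by parts introduces $\partial_s h^{[k]}$, which is not controlled by $\|h\|_{\alpha,\sigma}$, and reducing the resulting term $\G^{[k]}(\partial_s h^{[k]})$ back to item~3 (which you in turn reduce to item~1) would be awkward since the ordering of dependencies becomes circular; the cleaner route to item~2 is the direct kernel estimate you already use for item~1 — along $\gamma_k$ the kernel decays like $e^{-c|k||s-u|/\varepsilon}$, so $\int_{\gamma_k}|e^{i\frac{k}{\varepsilon}(s-u)}|\,|ds|\lesssim \varepsilon/|k|$, and after splitting the path into a short arc near $u$ (where $|s^2+(\pi/2)^2|\approx|u^2+(\pi/2)^2|$) and a tail killed by the exponential, this yields the $\varepsilon$-gain without differentiating $h$. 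Otherwise your sketch is consistent with the cited reference.
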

In the following lemma we state and prove some properties of the function $A(u,\tau,\mu)$.

\begin{lemma}
\label{lemma_prop_A}
The function $A(u,\tau,\mu)$ satisfies  $A\in\mathcal{P}_1$, $\partial_u A\in\mathcal{P}_1$, $A^{[0]}\in\mathcal{P}_2$ and $||A(u,\tau,\mu)||_{1,\sigma}\leq M\cdot|\mu|\varepsilon$, $||\partial_u A(u,\tau,\mu)||_{1,\sigma}\leq M\cdot\frac{|\mu|}{\domvar}$, $||A^{[0]}||_{2,\sigma}\leq M\cdot |\mu|^2\varepsilon^2$. As a consequence, $\left|\left|\frac{1}{\varepsilon}\G(A)\right|\right|_{1,\sigma}\leq M\cdot|\mu|\varepsilon$.
\end{lemma}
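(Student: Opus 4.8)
The proof follows the estimates-and-fixed-point scheme of \cite{bib_exp_small}, and the whole lemma rests on one elementary remark about the coefficient $\cosh^2(u)$ multiplying $\partial_uT^{\mathrm{u},\mathrm{s}}$ in $A$. Since $\cosh$ has a \emph{simple} zero at $u=\pm i\frac{\pi}{2}$ (there $\cosh'=\sinh$ does not vanish), the meromorphic functions $\cosh^2(u)/(u^2+(\pi/2)^2)^2$ and $\sinh(2u)/(u^2+(\pi/2)^2)$ have only removable singularities at $\pm i\frac{\pi}{2}$, hence are entire. Since every $\mathcal{D}^{\mathrm{out}}_{\domvar,\phi}$ with $\domvar\ge\domvar_0$ lies in one fixed compact set depending only on $\phi$, this yields a constant $C=C(\phi)$, independent of $\domvar$ and $\varepsilon$, with
$$
|\cosh^2(u)|\le C\,|u^2+(\pi/2)^2|^2 ,\qquad |\cosh(u)\sinh(u)|\le C\,|u^2+(\pi/2)^2| \qquad\text{on }\mathcal{D}^{\mathrm{out}}_{\domvar,\phi}.
$$
The \emph{quadratic} gain in the first inequality is the crucial point: with only the linear one we would obtain $A\in\mathcal{P}_2$, which is too weak for the final assertion.

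Granting this, the statements on $A$ and $\partial_uA$ are quick. By Theorem~\ref{teo_existence_outer}, $|\partial_uT^{\mathrm{u},\mathrm{s}}|\le M|\mu|\varepsilon\,|u^2+(\pi/2)^2|^{-3}$ on $\mathcal{D}^{\mathrm{out}}_{\domvar,\phi}\times\T_\sigma$, hence $|u^2+(\pi/2)^2|\cdot|A|\le \frac{1}{8}C\,|u^2+(\pi/2)^2|^{3}\cdot 2M|\mu|\varepsilon\,|u^2+(\pi/2)^2|^{-3}=M'|\mu|\varepsilon$; summing over the Fourier modes (using analyticity in $\tau$ on a strip slightly wider than $\sigma$, as per the standing convention on $\sigma$) gives $A\in\mathcal{P}_1$ with $\|A\|_{1,\sigma}\le M|\mu|\varepsilon$. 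For $\partial_uA$ I would not differentiate the explicit expression of $A$ but use a Cauchy estimate: applying Theorem~\ref{teo_existence_outer} and the bound just obtained on the slightly larger domain $\mathcal{D}^{\mathrm{out}}_{\domvar/2,\phi}\supset\mathcal{D}^{\mathrm{out}}_{\domvar,\phi}$ and differentiating $A$ over a disc of radius of order $\domvar\varepsilon$ contained in it (on which $|{u'}^2+(\pi/2)^2|\ge\frac12|u^2+(\pi/2)^2|$), one gets $|\partial_uA(u,\tau,\mu)|\le \frac{M}{\domvar\varepsilon}\cdot\frac{|\mu|\varepsilon}{|u^2+(\pi/2)^2|}=\frac{M|\mu|}{\domvar\,|u^2+(\pi/2)^2|}$, i.e.\ $\partial_uA\in\mathcal{P}_1$ with $\|\partial_uA\|_{1,\sigma}\le M|\mu|/\domvar$.

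The estimate for $A^{[0]}$ exploits that $g$ has zero mean. Taking the zero-th Fourier coefficient in $\tau$ of the Hamilton--Jacobi equation~\eqref{eq_HJ} and using $(\partial_\tau T^{\mathrm{u},\mathrm{s}})^{[0]}=0$ and $g^{[0]}=0$ yields $(\partial_uT^{\mathrm{u},\mathrm{s}})^{[0]}(u)=-\frac18\cosh^2(u)\,\bigl((\partial_uT^{\mathrm{u},\mathrm{s}})^2\bigr)^{[0]}(u)$; bounding $|((\partial_uT^{\mathrm{u},\mathrm{s}})^2)^{[0]}(u)|\le \sup_{\tau}|\partial_uT^{\mathrm{u},\mathrm{s}}(u,\tau,\mu)|^2\le M|\mu|^2\varepsilon^2\,|u^2+(\pi/2)^2|^{-6}$ and using the quadratic cancellation twice gives $|A^{[0]}(u)|=\frac18|\cosh^2(u)|\,|(\partial_uT^{\mathrm{u}})^{[0]}+(\partial_uT^{\mathrm{s}})^{[0]}|\le M|\mu|^2\varepsilon^2\,|u^2+(\pi/2)^2|^{-2}$, i.e.\ $A^{[0]}\in\mathcal{P}_2$ with $\|A^{[0]}\|_{2,\sigma}\le M|\mu|^2\varepsilon^2$ (no Fourier sum is involved, $A^{[0]}$ carrying only the mode $k=0$). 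Finally, for $\frac1\varepsilon\G(A)$ I split $A=A^{[0]}+(A-A^{[0]})$. The summand $A-A^{[0]}$ has vanishing zero-th Fourier coefficient, so property~2 of Lemma~\ref{lemma_properties_G_C} gives $\|\G(A-A^{[0]})\|_{1,\sigma}\le M\varepsilon\,\|A-A^{[0]}\|_{1,\sigma}\le M\varepsilon\bigl(\|A\|_{1,\sigma}+\|A^{[0]}\|_{1,\sigma}\bigr)\le M'|\mu|\varepsilon^2$, where I used $\|A^{[0]}\|_{1,\sigma}\le \|A^{[0]}\|_{2,\sigma}/\inf_{\mathcal{D}^{\mathrm{out}}_{\domvar,\phi}}|u^2+(\pi/2)^2|\le M|\mu|^2\varepsilon$ and $|\mu|\le\mu_0$; and property~4 of Lemma~\ref{lemma_properties_G_C} with $\alpha=2$ gives $\|\G(A^{[0]})\|_{1,\sigma}\le M\|A^{[0]}\|_{2,\sigma}\le M|\mu|^2\varepsilon^2$. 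Dividing by $\varepsilon$ and adding, $\|\frac1\varepsilon\G(A)\|_{1,\sigma}\le M|\mu|\varepsilon^2+M|\mu|^2\varepsilon\le M|\mu|\varepsilon$.

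The only genuinely non-routine ingredient is the observation that $\cosh^2(u)/(u^2+(\pi/2)^2)^2$ is entire, so that $|\cosh^2(u)|$ is controlled by $|u^2+(\pi/2)^2|^2$ (rather than only by $|u^2+(\pi/2)^2|$) uniformly in $\domvar$ and $\varepsilon$; once this is recorded, everything reduces to Theorem~\ref{teo_existence_outer}, one Cauchy estimate, the zero mean of $g$, and the mapping properties of $\G$ collected in Lemma~\ref{lemma_properties_G_C}. The only bookkeeping nuisance, as usual in this setting, is tracking the width of the $\tau$-strip when passing from sup-bounds to Fourier norms and the enlargement of $\mathcal{D}^{\mathrm{out}}_{\domvar,\phi}$ needed for the Cauchy estimates; both cost only a harmless reduction of $\sigma$ and an increase of $\domvar_0$.
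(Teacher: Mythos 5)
Your proof is correct and follows essentially the same route as the paper: the quadratic gain from the double zero of $\cosh^2(u)$ at $\pm i\frac{\pi}{2}$ (i.e.\ $\cosh^2\in\mathcal{P}_{-2}$), a Cauchy estimate on a slightly enlarged domain for $\partial_uA$, the averaged Hamilton--Jacobi equation together with $g^{[0]}=0$ for $A^{[0]}$, and the splitting $A=A^{[0]}+(A-A^{[0]})$ handled with items 2 and 4 of Lemma \ref{lemma_properties_G_C}. The only blemish is the intermediate display ``$\le M|\mu|\varepsilon^2+M|\mu|^2\varepsilon$'': after dividing $\|\G(A-A^{[0]})\|_{1,\sigma}\le M|\mu|\varepsilon^2$ by $\varepsilon$ the first term is $M|\mu|\varepsilon$, not $M|\mu|\varepsilon^2$, which still yields the claimed bound $M|\mu|\varepsilon$.
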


\begin{proof}
    By Theorem \ref{teo_existence_outer}, $\partial_uT^{s,u}\in \mathcal{P}_3$ and $||\partial_uT^{\mathrm{u},\mathrm{s}}||_{3,\sigma}\leq M |\mu|\varepsilon$. Besides, from the fact that $\cosh(u)$ has a pole of order 1 close to the singularities $\pm i\frac{\pi}{2}$ it follows that  $-\frac{1}{8}\cosh^2(u)\in\mathcal{P}_{-2}$. Thus, $A\in\mathcal{P}_1$ and
    $$
    ||A(u,\tau,\mu)||_{1,\sigma}\leq M\cdot |\mu|\varepsilon.
    $$
Due to the geometry of the domain and using Cauchy's formula for the derivative, we can find a bound for the derivative in the same space —reducing slightly $\domvar$ and $\rho$— dividing the norm by $\domvar\varepsilon$. This yields the bound
$$
||A(u,\tau,\mu)||_{1,\sigma}\leq M\cdot\frac{|\mu|}{\domvar}.
$$
As for the average, $A^{[0]}(u,\mu)$, we express it in terms of the average of the invariant manifolds:
$$
A^{[0]}(u,\mu) =-\frac{1}{8}\cosh^2(u)\left(\partial_uT^{\mathrm{u},[0]}(u,\mu)+\partial_uT^{\mathrm{s},[0]}(u,\mu)\right),
$$
As $T^{\mathrm{u}}(u,\tau,\mu)$ and $T^{\mathrm{s}}(u,\tau,\mu)$ satisfy equation \eqref{eq_HJ},
$$\frac{1}{\varepsilon}\partial_{\tau}T^{\mathrm{u},\mathrm{s}}(u,\tau,\mu)+\partial_uT^{\mathrm{u},\mathrm{s}}(u,\tau,\mu)=-\frac{1}{8}\cosh^2(u)(\partial_uT^{\mathrm{u},\mathrm{s}})^2(u,\tau,\mu)+2\mu \frac{g(\tau)}{\cosh^2(u
)},
$$
and, since $g^{[0]}=0$, we have:
$$
\partial_u{T^{\mathrm{u},\mathrm{s},[0]}}(u,\mu)=-\frac{1}{8}\cosh^2(u)((\partial_u{T^{\mathrm{u},\mathrm{s}}})^2)^{[0]}(u,\mu).
$$
From Theorem \ref{teo_existence_outer} we know $||\partial_u T^{\mathrm{u},\mathrm{s}}||_{3,\sigma}\leq M|\mu|\varepsilon$. Therefore, by property 5 of Lemma \ref{lemma_properties_G_C}
$$
||T^{\mathrm{u},\mathrm{s},[0]}||_{4}\leq M\cdot|\mu|^2\varepsilon^2.
$$
As a consequence,
$$
||A^{[0]}||_2\leq M\cdot|\mu|^2\varepsilon^2.
$$
Finally, we deal with $\frac{1}{\varepsilon}\G(A(u,\tau,\mu))$. We rewrite it as
$$
\frac{1}{\varepsilon}\G(A)=\frac{1}{\varepsilon}\G(A^{[0]})+\frac{1}{\varepsilon}\G(A-A^{[0]})=:N_1+N_2.
$$
By item 4 of Lemma \ref{lemma_properties_G_C}, $$||N_1||_{1,\sigma}\leq\frac{M}{\varepsilon}\cdot||A^{[0]}||_{2,\sigma}\leq M\cdot|\mu|^2\varepsilon$$ and, using item 2 of Lemma \ref{lemma_properties_G_C},
$$
||N_2||_{1,\sigma}\leq\frac{1}{\varepsilon}M\cdot\varepsilon||A-A^{[0]}||_{1,\sigma}\leq M\cdot|\mu|\varepsilon.
$$
\end{proof}

We now define the linear operator $
\mathcal{L}(h)=\G(\partial_u(A\cdot h))-\G(\partial_u A
\cdot h)$.
\begin{lemma} The operator
$\mathcal{L}:\mathcal{P}_1\longrightarrow \mathcal{P}_1$ is well defined and it satisfies $||\mathcal{L}(h)||_{1,\sigma}\leq \frac{M}{\domvar}\cdot||h||_{1,\sigma}$.
\label{lemma_appendix_l}
\end{lemma}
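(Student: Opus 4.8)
The plan is to estimate the two summands of $\mathcal{L}(h)=\G(\partial_u(A\cdot h))-\G(\partial_u A\cdot h)$ separately, combining the bounds for $A$ and $\partial_u A$ from Lemma \ref{lemma_prop_A} with the mapping properties of $\G$ in Lemma \ref{lemma_properties_G_C}. I will use two standard facts about the weighted Fourier norms: first, that they are submultiplicative, i.e.\ if $f\in\mathcal{P}_{\alpha_1}$ and $g\in\mathcal{P}_{\alpha_2}$ then $fg\in\mathcal{P}_{\alpha_1+\alpha_2}$ with $\|fg\|_{\alpha_1+\alpha_2,\sigma}\leq\|f\|_{\alpha_1,\sigma}\|g\|_{\alpha_2,\sigma}$; and second, that since on $\mathcal{D}^{\mathrm{out}}_{\domvar,\phi}$ the closest point to either singularity $\pm i\frac{\pi}{2}$ lies at distance $\geq\domvar\varepsilon$ while the other factor $|u\mp i\frac{\pi}{2}|$ stays bounded below by a constant, one has $|u^2+(\pi/2)^2|^{-1}\leq M(\domvar\varepsilon)^{-1}$ on the domain and hence a continuous embedding $\mathcal{P}_2\hookrightarrow\mathcal{P}_1$ with $\|f\|_{1,\sigma}\leq M(\domvar\varepsilon)^{-1}\|f\|_{2,\sigma}$.

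For the first term I would argue as follows. Since $h\in\mathcal{P}_1$ and $A\in\mathcal{P}_1$ with $\|A\|_{1,\sigma}\leq M|\mu|\varepsilon$ by Lemma \ref{lemma_prop_A}, submultiplicativity gives $A\cdot h\in\mathcal{P}_2$ with $\|A h\|_{2,\sigma}\leq M|\mu|\varepsilon\,\|h\|_{1,\sigma}$. Applying item 3 of Lemma \ref{lemma_properties_G_C} with $\alpha=2$ to the function $Ah$ yields $\G(\partial_u(A h))\in\mathcal{P}_2$ with $\|\G(\partial_u(A h))\|_{2,\sigma}\leq M\|A h\|_{2,\sigma}\leq M|\mu|\varepsilon\,\|h\|_{1,\sigma}$, and the embedding $\mathcal{P}_2\hookrightarrow\mathcal{P}_1$ then gives $\|\G(\partial_u(A h))\|_{1,\sigma}\leq M|\mu|\domvar^{-1}\|h\|_{1,\sigma}$ — the two factors of $\varepsilon$ cancelling. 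For the second term, $\partial_u A\in\mathcal{P}_1$ with $\|\partial_u A\|_{1,\sigma}\leq M|\mu|\domvar^{-1}$, again by Lemma \ref{lemma_prop_A}, so $\partial_u A\cdot h\in\mathcal{P}_2$ with $\|\partial_u A\cdot h\|_{2,\sigma}\leq M|\mu|\domvar^{-1}\|h\|_{1,\sigma}$, and item 4 of Lemma \ref{lemma_properties_G_C} (applicable since $\alpha=2>1$) gives $\G(\partial_u A\cdot h)\in\mathcal{P}_1$ with $\|\G(\partial_u A\cdot h)\|_{1,\sigma}\leq M\|\partial_u A\cdot h\|_{2,\sigma}\leq M|\mu|\domvar^{-1}\|h\|_{1,\sigma}$. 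Adding the two estimates and absorbing $|\mu|\leq\mu_0$ into the constant yields $\mathcal{L}(h)\in\mathcal{P}_1$ and $\|\mathcal{L}(h)\|_{1,\sigma}\leq M\domvar^{-1}\|h\|_{1,\sigma}$, which is the assertion.

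There is no genuine obstacle here; the proof is bookkeeping, and the only point needing care is keeping track of the interplay between the two small parameters. The factor $\varepsilon$ that $A$ carries in its $\mathcal{P}_1$-norm is exactly compensated by the $\varepsilon^{-1}$ lost in the embedding $\mathcal{P}_2\hookrightarrow\mathcal{P}_1$, so that only the gain $\domvar^{-1}$ survives; this is precisely what later makes $\mathcal{L}$ a contraction once $\domvar\geq\domvar_0$ is taken large enough in the fixed point argument producing $\mathcal{C}$. One also has to be sure to invoke item 3 (rather than item 4) of Lemma \ref{lemma_properties_G_C} for the first term, so as not to differentiate $h$ directly and thereby raise the order of the poles at $\pm i\frac{\pi}{2}$ — which is exactly why $\mathcal{L}$ was written in the split form $\G(\partial_u(A h))-\G(\partial_u A\cdot h)$ instead of the formally equivalent $\G(A\,\partial_u h)$.
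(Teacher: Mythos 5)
Your proof is correct and follows essentially the same route as the paper's: the same splitting of $\mathcal{L}(h)$, submultiplicativity to place $A\cdot h$ and $\partial_uA\cdot h$ in $\mathcal{P}_2$ via the bounds of Lemma \ref{lemma_prop_A}, item 3 of Lemma \ref{lemma_properties_G_C} followed by the embedding $\mathcal{P}_2\hookrightarrow\mathcal{P}_1$ (costing a factor $(\domvar\varepsilon)^{-1}$, which cancels the $\varepsilon$ from $\|A\|_{1,\sigma}$) for the first term, and item 4 for the second. One small remark: the paper cites ``item 1'' for the second term, but the inequality it actually uses, $\|\G(\partial_u A\cdot h)\|_{1,\sigma}\leq M\|\partial_u A\cdot h\|_{2,\sigma}$, is item 4 of Lemma \ref{lemma_properties_G_C}; your citation is the correct one.
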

\begin{proof}
By Lemma \ref{lemma_prop_A}, $||A||_{1,\sigma}\leq M\cdot|\mu|\varepsilon$ and, by item 3 of Lemma \ref{lemma_properties_G_C},
$$
||\G(\partial_u(A\cdot h))||_{2,\sigma}\leq M||A\cdot h||_{2,\sigma}\leq M\cdot|\mu|\varepsilon||h||_{1,\sigma}.
$$
Therefore, 
$$
||\G(\partial_u(A\cdot h))||_{1,\sigma}\leq M\cdot\frac{|\mu|}{\domvar}||h||_{1,\sigma}.
$$
Using Lemma \ref{lemma_prop_A}, $||\partial_u A||_1\leq M\cdot\frac{|\mu|}{\domvar}$ and, by item 1 of Lemma \ref{lemma_properties_G_C},
$$
||\G(\partial_u A\cdot h)||_{1,\sigma}\leq M\cdot||\partial_u A \cdot h||_{2,\sigma}\leq M\cdot \frac{|\mu|}{\domvar}||h||_{1,\sigma}.
$$
\end{proof}
Finally, we can write equation \eqref{eq_appendix_C} as:
$$
(I-\mathcal{L})(\mathcal{C}(u,\tau,\mu))=\frac{1}{\varepsilon}\mathcal{G}(A(u,\tau,\mu)).
$$
By Lemma \ref{lemma_appendix_l}, $I-\mathcal{L}$ is invertible in $\mathcal{P}_1$, so that $\mathcal{C}(u,\tau,\mu)=(I-\mathcal{L})^{-1}\left(\frac{1}{\varepsilon}\G(A(u,\tau,\mu))\right)\in\mathcal{P}_1$ and, using Lemma \ref{lemma_prop_A},
$$
||\mathcal{C}(u,\mu,\tau)||_{1,\sigma}\leq M\cdot\left|\left|\frac{1}{\varepsilon}\G(A)\right|\right|_{1,\sigma}\leq M\cdot|\mu|\varepsilon.
$$
 We obtain the bound for the derivative by a straightforward application of Cauchy's formula and by reducing slightly $\domvar$ and $\rho$:
 $$
 ||\partial_u\mathcal{C}(u,\tau,\mu)||_{1,\sigma}\leq M\cdot\frac{|\mu|}{\domvar}.
 $$
\section{Proof of Lemma \ref{lemma_bound_tail} }

We begin with the first item. The function
$$
\tilde{h}(\mu)=\left\{
\begin{array}{ccl}
\frac{h(\mu)}{\mu^\nu}&& \mu\neq 0\\
\frac{h^{(\nu)}}{\nu!}&& \mu=0
\end{array}\right.
$$
is analytic in $\mathcal{B}_\eta(0)$.
The maximum principle forces the maximum of the function to be at a point $\mu^*$ such that $|\mu^*|=\eta$. Then
$$
|\tilde{h}(\mu)|=\left|\frac{h(\mu)}{\mu^\nu}\right|\leq\max_{|\mu|=\eta}\frac{|h(\mu)|}{|\mu^\nu|}\leq\frac{M_h}{\eta^\nu}
$$
and from here we have the result
$$
|h(\mu)|\leq \eta^{-\nu}\cdot|\mu|^\nu\cdot M_h.
$$
 
Now we proof the second item. 
Since
$$
h^{\geq\nu}(\mu)=h(\mu)-\sum_{j=0}^{\nu-1}h_j\cdot\mu^j,
$$
we use Lemma \ref{lemma_bounds_coefficients_powerseries} to bound the second term on the right-hand side. We have a constant $M_1$ depending only on $\nu$ such that $
|h_j|\leq M_1\cdot M_h$. Hence,
$$
\left|h^{\geq n}(\mu)\right|\leq |h(\mu)|+\left|\sum_{j=0}^{\nu-1}h_j\cdot \mu^j\right|\leq M_h+M_1\cdot M_h\cdot\sum_{j=0}^{\nu-1}|\eta|^j\leq \left(1+M_1\cdot\sum_{j=0}^{\nu-1}|\eta|^j\right)\cdot M_h
$$
and the first item of Lemma \ref{lemma_bound_tail} (already proven) implies
$$
|h^{\geq n}(\mu)|\leq  \eta^{-\nu}\cdot|\mu|^\nu\cdot \left(1+M_1\cdot\sum_{j=0}^{\nu-1}|\eta|^j\right)\cdot M_h=|\mu|^\nu\cdot M_2\cdot M_h
$$
with $M_2=\eta^{-\nu}\cdot \left(1+M_1\cdot\sum_{j=0}^{\nu-1}|\eta|^j\right)$ only depending on $\nu$ and $\eta$.
 
\newpage

\nocite{*}
\newpage
\bibliographystyle{plain}
\bibliography{references}

\begin{thebibliography}{10}

\bibitem{bib_angenent_variational}
Sigurd Angenent.
\newblock A variational interpretation of {M}elnikov's function and
  exponentially small separatrix splitting.
\newblock 192:5--35, 1993.

\bibitem{bib_Arnold}
V.~I. Arnold.
\newblock {\em Instability of dynamical systems with many degrees of freedom},
  volume 156.
\newblock 1964.

\bibitem{bib_Inner}
I.~Baldom\'{a}.
\newblock The inner equation for one and a half degrees of freedom rapidly
  forced {H}amiltonian systems.
\newblock {\em Nonlinearity}, 19(6):1415--1445, 2006.

\bibitem{bib_exp_small}
Inmaculada Baldom\'{a}, Ernest Fontich, Marcel Guardia, and Tere~M. Seara.
\newblock Exponentially small splitting of separatrices beyond {M}elnikov
  analysis: rigorous results.
\newblock {\em J. Differential Equations}, 253(12):3304--3439, 2012.

\bibitem{bib_Higher_Order_Melnikov}
Fengjuan Chen and Qiudong Wang.
\newblock High order {M}elnikov method: theory and application.
\newblock {\em J. Differential Equations}, 267(2):1095--1128, 2019.

\bibitem{bib_Non_dominance_Melnikov_example}
Fengjuan Chen and Qiudong Wang.
\newblock Non-dominance of the melnikov function: an example.
\newblock 2020.

\bibitem{bib_Seara_Delshams}
Amadeo Delshams and Teresa~M. Seara.
\newblock An asymptotic expression for the splitting of separatrices of the
  rapidly forced pendulum.
\newblock {\em Comm. Math. Phys.}, 150(3):433--463, 1992.

\bibitem{bib_quasiperiodic_tori}
Amadeu Delshams, Vassili Gelfreich, \`Angel Jorba, and Tere~M. Seara.
\newblock Exponentially small splitting of separatrices under fast
  quasiperiodic forcing.
\newblock {\em Comm. Math. Phys.}, 189(1):35--71, 1997.

\bibitem{bib_Tere_Amadeu}
Amadeu Delshams and Tere~M. Seara.
\newblock Splitting of separatrices in {H}amiltonian systems with one and a
  half degrees of freedom.
\newblock {\em Math. Phys. Electron. J.}, 3:Paper 4, 40, 1997.

\bibitem{bib_transcendentally_ellison}
James~A. Ellison, Martin Kummer, and A.~W. S\'{a}enz.
\newblock Transcendentally small transversality in the rapidly forced pendulum.
\newblock {\em J. Dynam. Differential Equations}, 5(2):241--277, 1993.

\bibitem{bib_Fontich}
E.~Fontich.
\newblock Rapidly forced planar vector fields and splitting of separatrices.
\newblock {\em J. Differential Equations}, 119(2):310--335, 1995.

\bibitem{bib_Separatrices_splitting_Gelfreich}
V.~G. Gelfreich.
\newblock Separatrices splitting for the rapidly forced pendulum.
\newblock In {\em Seminar on {D}ynamical {S}ystems ({S}t. {P}etersburg, 1991)},
  volume~12 of {\em Progr. Nonlinear Differential Equations Appl.}, pages
  47--67. Birkh\"{a}user, Basel, 1994.

\bibitem{bib_Gelfreich}
V.~G. Gelfreich.
\newblock Melnikov method and exponentially small splitting of separatrices.
\newblock {\em Phys. D}, 101(3-4):227--248, 1997.

\bibitem{bib_pendulum_revisited}
Marcel Guardia, Carme Oliv\'{e}, and Tere~M. Seara.
\newblock Exponentially small splitting for the pendulum: a classical problem
  revisited.
\newblock {\em J. Nonlinear Sci.}, 20(5):595--685, 2010.

\bibitem{bib_Guckenheimer-Holmes}
John Guckenheimer and Philip Holmes.
\newblock Nonlinear oscillations, dynamical systems, and bifurcations of vector
  fields.
\newblock volume~42 of {\em Applied Mathematical Sciences}, pages xvi+453.
  Springer-Verlag, New York, 1983.

\bibitem{bib_laz}
V.~F. Lazutkin.
\newblock Splitting of separatrices for the {C}hirikov standard map.
\newblock {\em Zap. Nauchn. Sem. S.-Peterburg. Otdel. Mat. Inst. Steklov.
  (POMI)}, 300(Teor. Predst. Din. Sist. Spets. Vyp. 8):25--55, 285, 2003.

\bibitem{bib_Melnikov}
V.~K. Melnikov.
\newblock On the stability of a center for time-periodic perturbations.
\newblock {\em Trudy Moskov. Mat. Ob\v{s}\v{c}.}, 12:3--52, 1963.

\bibitem{bib_Higher_order_perturbation_pendulum}
Ali Oksasoglu and Qiudong Wang.
\newblock High order {M}elnikov method: pendulums.
\newblock {\em J. Differential Equations}, 312:176--208, 2022.

\bibitem{bib_thesis_Carme}
C.~Olivé~Farré.
\newblock {\em Càlcul de l'escissió de separatrius usant tècniques de
  matching complex i ressurgència aplicades a l'equació de Hamilton-Jacobi}.
\newblock PhD thesis, Facultat de Matemàtiques i Estadística, 2006.

\bibitem{bib_sauzin}
David Sauzin.
\newblock A new method for measuring the splitting of invariant manifolds.
\newblock {\em Ann. Sci. \'{E}cole Norm. Sup. (4)}, 34(2):159--221, 2001.

\bibitem{bib_Scheurle}
J\"{u}rgen Scheurle.
\newblock Chaos in a rapidly forced pendulum equation.
\newblock In {\em Dynamics and control of multibody systems ({B}runswick, {ME},
  1988)}, volume~97 of {\em Contemp. Math.}, pages 411--419. Amer. Math. Soc.,
  Providence, RI, 1989.

\end{thebibliography}

\end{document}